\title[Mixing of the Mineyev flow, orbital counting and Poincar\'e series for strongly hyperbolic metrics]
      {Mixing of the Mineyev flow, orbital counting and Poincar\'e series for strongly hyperbolic metrics}
\date{\today}
\author{Stephen Cantrell}
\address{Department of Mathematics, 
University of Warwick,
Coventry, CV4 7AL, UK}
\email{stephen.cantrell@warwick.ac.uk}
\date{\today. 
\\
2020 \textit{Mathematics Subject Classification}. Primary 20F67; Secondary 37D35, 37D40. 
\\
\textit{Key words and phrases}.
Orbital counting, Green metric, Poincar\'e series}
\title[Counting for strongly hyperbolic metrics]{Mixing of the Mineyev flow, orbital counting and Poincar\'e series for strongly hyperbolic metrics}
\begin{document}
\maketitle

\begin{abstract}
We obtain orbital counting results for the class of strongly hyperbolic metrics on hyperbolic groups. To achieve this we combine ergodic theoretic techniques involving the Mineyev topological flow and symbolic dynamics. Our results apply to the Green metric associated to an admissible,  finitely supported, symmetric random walk, to the Mineyev hat metric and to Hilbert length functions associated to Anosov representations. We also describe the domain of analyticity for the Poincar\'e series associated to these metrics, prove mixing results for the Mineyev topological flow and obtain correlation asymptotics for pairs of metrics. 
\end{abstract}

\section{Introduction}

Consider a group $\G$ acting by isometries on a metric space $(X,d)$ with fixed base point $o \in X$. The orbital counting problem is to ascertain the growth rate of
\[
\#\{ x \in \G: d(o,x\cdot o) < T\} \ \ \text{as $T \to \infty$. }
\]
A result of Roblin \cite{Roblin} states that, when $\G$ acts properly discontinuously and $(X,d)$ is a connected, simply connected Riemannian manifold with negative sectional curvatures such that
\begin{enumerate}
\item the length spectrum of $d$ is non-arithmetic; and,
\item the unit tangent bundle of the quotient space $X/\G$ admits a finite Bowen-Margulis-Sullivan measure,
\end{enumerate}
then there exist $C,\delta > 0$ such that
\[ 
 C e^{-\delta T} \ \#\{ x \in \G: d(o,x\cdot o) < T\} \to 1 \ \ \text{ as $T \to\infty$}.
\]
In fact, Roblin's result applies in more general $\text{CAT}(-1)$ settings. To prove this result Roblin exploits the mixing properties of the geodesic flow on $X/\G$ which are guaranteed thanks to the assumption that the length spectrum of $d$ is non-arithmetic. An alternative way to obtain the above counting result  in certain more restricted settings is to use ideas from thermodynamic formalism: if both $\G$ and the action of $\G$ on $(X,d)$ are sufficiently nice (cocompact, etc.), then the metric $d$ can be encoded as a potential on a subshift of finite type coming from a combinatorial coding of the group $\G$. This allows one to use techniques form thermodynamic formalism to study the analytic properties of the Poincar\'e series
\begin{equation*} 
\sum_{x \in \G} e^{-s d(o,x \cdot o)} \ \ \text{ for $s \in \C$},
\end{equation*}
associated to the metric $d$. The orbital counting asymptotic can then be deduced from the analytic properties of this series via a Tauberian theorem. This method can be used to study specific examples in which the group $\G$ is known to admit a combinatorial coding which exhibits certain good properties. This is the case for free groups and Fuchsian groups (i.e. groups that act cocompactly on the hyperbolic plane) as well as a few other examples. 

Although the result of Roblin mentioned above has many interesting applications, there are still many metrics for which the orbital counting problem remains unsolved. 
For example, we could equip a hyperbolic group $\G$ with the Green metric $d$ coming from a finitely supported, symmetric random walk (See Section 1.1). In this case $(\G,d)$ is not necessarily a $\text{CAT}(-1)$ metric space: $d$ may not be geodesic, but roughly geodesic (see Section 2.1).
It is natural to ask whether it is still possible to obtain orbital counting results in this setting. The Green metric in this case is an example of a \textit{strongly hyperbolic} metric: a class of metrics with nice asymptotic properties that was introduced by Nica and \v{S}pakula \cite{NicaSpakula} and inspired by the work of Mineyev \cite{MineyevFlow}. The strongly hyperbolic property can be seen as a generalisation of the $\text{CAT}(-1)$ property that can be satisfied by metrics that are roughly geodesic. In fact, $\text{CAT}(-1)$ metric spaces are strongly hyperbolic \cite{NicaSpakula}. This discussion leads us to question whether it is possible to resolve the orbital counting problem for the class of strongly hyperbolic metrics. Furthermore, can we descibe the domain of analyticity for the Poincar\'e series associated to such metrics?
These are the aims of this current work.
To prove our results we draw upon ideas from both of the techniques mentioned above: we use the Mineyev topological flow on the group and also thermodynamic techniques on a subshift of finite type coming from the Cannon coding. This will allow us to study Poincar\'e series and consequently deduce our counting results. We now state our main results and then explain our methods. Given a non-elementary hyperbolic group $\G$ we will write $\Dc_\G$ for the collection of $\G$-invariant hyperbolic metrics that are quasi-isometric to a word metric. We will write $\conj$ for the collection of conjugacy classes of $\G$. Given $d \in \Dc_\G$ we use the notation
\[
\ell_d[x] = \lim_{n\to\infty} \frac{d(o,x^n)}{n} \ \ \text{(where $[x]$ is the conjugacy class containing $x$)}
\]
for the translation distance function and $\conj'$ for the set of conjugacy classes with strictly positive translation length (i.e. non-torsion classes). We say that $d$ has non-arithmetic length spectrum if $\{\ell_d[x] : [x] \in \conj\}$ is not contained in $a \Z$ for some $a \in \R$. For functions $f,g: \R \to \R$ we write $f(T) \sim g(T)$ as $T\to\infty$ if $f(T)/g(T) \to 1$ as $T \to \infty$. Our orbital counting result is as follows. Recall that a number $\beta \in \R$ is called badly approximable if there exist $\alpha > 2, C>0$ such that $|\beta - \frac{p}{q}| \ge \frac{C}{q^\alpha}$ for all $p,q \in \Z$ with $q > 0$.

\begin{theorem}\label{thm.1}
Let $\G$  be a non-elementary hyperbolic group with identity element $o \in \G$. Suppose that $d \in \Dc_\G$ is a strongly hyperbolic metric such that the length spectrum of $d$ is not arithmetic. Then there exist $C>0$, $\delta >0$ such that
\[
\#\{x \in \G : d(o,x) < T \} \sim C  e^{\delta T}
\]
as $T \to \infty$. If there exist two conjugacy classes $[x], [y] \in \conj'$ such that the quotient $\ell_d[x]/\ell_d[y]$ is badly approximable then there exists $\k> 0 $ such that
\[
\#\{x \in \G : d(o,x) < T \}  = C  e^{\delta T}(1 + O(T^{-\k}))
\]
as $T \to \infty$. 
\end{theorem}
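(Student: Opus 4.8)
The plan is to translate the count into the spectral theory of transfer operators on a subshift of finite type encoding $\G$ and then to extract the asymptotics by Tauberian arguments; the mixing statement for the Mineyev flow will appear as a symbolic-suspension reformulation of the non-arithmeticity hypothesis. First I would fix a Cannon automatic structure on $\G$, namely a finite automaton whose edge-labelled paths from the initial state enumerate a set of geodesic normal forms for the elements of $\G$, one per element. The strongly hyperbolic inequality \cite{NicaSpakula} ensures that along these normal forms $d(o,\cdot)$ agrees, up to a uniformly bounded additive error, with a Birkhoff sum of a \emph{H\"older} potential carried by the associated subshift --- the exponential-decay estimate on Gromov products is precisely what upgrades the naive horofunction cocycle to a H\"older one. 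Since the automaton need not be strongly connected, I would decompose it into strongly connected components, each giving a subshift of finite type $(\Sigma,\sigma)$ (topologically mixing after passing to a suitable power) and a H\"older potential $r\colon\Sigma\to\R$ with $d(o,w)=S_{|w|}r+O(1)$ along the corresponding admissible words $w$, and use an inclusion--exclusion in the style of Calegari--Fujiwara to write $\sum_{x\in\G}e^{-sd(o,x)}$ as a finite $\Z$-linear combination of the Dirichlet series attached to these pieces; it then suffices to analyse one such model series.

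Next I would introduce the family of Ruelle transfer operators $\mathcal{L}_{-sr}$ on a space of H\"older functions on $\Sigma$ and identify the model series with a matrix entry of $\sum_{n\ge0}\mathcal{L}_{-sr}^{\,n}=(I-\mathcal{L}_{-sr})^{-1}$, so that its poles are governed by the leading eigenvalue $\lambda(s)$ of $\mathcal{L}_{-sr}$. The exponential growth rate $\delta$ is characterised by the pressure equation $P(-\delta r)=0$, equivalently $\lambda(\delta)=1$; the Ruelle--Perron--Frobenius theorem makes $s\mapsto\lambda(s)$ real-analytic near $\delta$ with $\lambda'(\delta)<0$, so $\sum_{x\in\G}e^{-sd(o,x)}$ converges for $\Re(s)>\delta$ and has a simple pole at $s=\delta$. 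To deduce $N(T):=\#\{x\in\G:d(o,x)<T\}\sim Ce^{\delta T}$ I would then show that this series extends continuously to the punctured line $\{\Re(s)=\delta\}\setminus\{\delta\}$, which amounts to showing that $\mathcal{L}_{-(\delta+it)r}$ has no eigenvalue of modulus $1$ for $t\neq0$ (with the requisite control as $|\Im(s)|\to\infty$ coming from Lasota--Yorke estimates). By the standard dichotomy (Parry--Pollicott) such an eigenvalue would force $tr$ to be H\"older-cohomologous to a function with values in $2\pi\Z$, and by the Liv\v{s}ic theorem this is equivalent to $t$ times the set of periodic Birkhoff sums of $r$ lying in $2\pi\Z$; since each translation length $\ell_d[x]$ equals, up to a cohomology adjustment absorbing the bounded error, such a periodic sum, this would place $\{\ell_d[x] : [x]\in\conj\}$ inside $(2\pi/t)\Z$, contradicting non-arithmeticity. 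The Wiener--Ikehara Tauberian theorem applied to the identity $s\int_0^{\infty}e^{-sT}N(T)\,dT=\sum_{x\in\G}e^{-sd(o,x)}$ then yields the first asymptotic, and I would remark that this step is equivalent to topological mixing of the suspension of $(\Sigma,\sigma)$ by the roof $r$ --- a model for the Mineyev topological flow \cite{MineyevFlow} carrying the $d$-geometry --- so the same argument proves the corresponding mixing theorem.

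For the refined estimate I would need $\sum_{x\in\G}e^{-sd(o,x)}$ to continue analytically, with at most polynomial growth in $\Im(s)$, to a region of the form $\{\Re(s)>\delta-c(1+|\Im(s)|)^{-\beta}\}$ containing no pole other than $s=\delta$; feeding this into a quantitative Tauberian theorem (a contour shift exploiting the polynomial bounds, as in Pollicott--Sharp) produces $N(T)=Ce^{\delta T}(1+O(T^{-\k}))$ for some $\k>0$. The analytic continuation is the crux of the matter and the step I expect to be the main obstacle: it rests on a Dolgopyat-type bound $\|\mathcal{L}_{-(\delta+it)r}^{\,n}\|\le C|t|^{A}\rho^{n}$, with $\rho<1$ uniform for $|t|$ large, proved on a suitable H\"older norm adapted to the Cannon-coded subshift. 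Dolgopyat's iteration requires a non-concentration (uniform-non-integrability) input, and here the only available source of oscillation is the hypothesised pair $[x],[y]\in\conj'$: the point is that when $\ell_d[x]/\ell_d[y]$ is badly approximable one can run a pigeonhole argument over the two incommensurable periodic orbits to produce, at every large frequency $t$, a definite lower bound on the variation of the temporal-distance phase, which is precisely what the van der Corput step in Dolgopyat's argument consumes. Carrying this Diophantine substitute for the usual uniform-non-integrability hypothesis through the merely H\"older symbolic setting, and tracking how the exponent $\alpha$ in the badly-approximable condition propagates into $\beta$ and hence into $\k$, is the principal technical difficulty.
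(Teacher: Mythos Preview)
Your overall architecture matches the paper's --- Cannon coding, H\"older potential from strong hyperbolicity, transfer operators, Wiener--Ikehara for the first asymptotic, Dolgopyat plus a quantitative Tauberian theorem for the error term --- but you skip over the two structural facts about the coding that carry the actual weight, and both are genuine gaps.

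First, you assert that ``each translation length $\ell_d[x]$ equals \dots\ such a periodic sum,'' and thereby pass from non-arithmeticity of the length spectrum of $d$ to non-arithmeticity of $r$ on a given component. But periodic orbits in a fixed component $\Sigma_\mathcal{C}$ only see those conjugacy classes representable by loops \emph{in that component}; there is no a~priori reason a single component witnesses a non-lattice set of translation lengths, and it is entirely possible for $r$ to be lattice-valued on some component while $d$ has non-arithmetic spectrum. The paper has to prove a separate combinatorial fact (Proposition~\ref{prop.loops}, Corollary~\ref{cor.loops}): in any word-maximal component there is an integer $M$ such that \emph{every} non-torsion class $[g]$ has one of $[g^{\pm M}]$ represented by a loop there. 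Only with this in hand does arithmeticity of $r$ on $\Sigma_\mathcal{C}$ force arithmeticity of $\{\ell_d[x]\}$. The identical gap recurs in your Dolgopyat step: the badly approximable quotient $\ell_d[x]/\ell_d[y]$ must be realised as a ratio of periodic sums inside \emph{each} maximal component, and this again needs Corollary~\ref{cor.loops}.

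Second, ``it then suffices to analyse one such model series'' conceals a real difficulty. The Cannon graph can have several maximal components, and for the Poincar\'e series to have a single simple pole at $\delta$ (and for the inclusion--exclusion combination to behave well off the real axis) you need all of the leading pressure functions $s\mapsto \mathrm{P}_\mathcal{C}(-s\Psi_d)$ to coincide \emph{identically}, not merely to vanish simultaneously at $s=\delta$. The Calegari--Fujiwara argument you invoke only gives agreement of first and second derivatives at one point, which is insufficient. The paper upgrades this to full equality of the pressure curves (Proposition~\ref{prop.main}) by showing that the $-v_d\Psi_d$-maximal and word-maximal components are the same set: one inclusion uses growth quasi-tightness and the Manhattan curve (Lemma~\ref{lem.comp1}), the other uses the Mineyev-flow coding and the characterisation of flow-invariant measures (Lemma~\ref{lem.comp2}). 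Without this identification the spectral picture across components does not assemble into the clean analytic structure your Tauberian arguments require.
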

\begin{remark}
Note that if there exist $[x], [y] \in \conj'$ such that the quotient of their $d$-translation lengths is badly approximable, then the length spectrum of $d$ is necessarily non-arithmetic.
\end{remark}
The first part of this theorem is an immediate corollary of the following result regarding the analyticity of Poincar\'e series.
\begin{theorem}\label{thm.2}
Suppose that $d \in \Dc_\G$ is a strongly hyperbolic metric that has  non-arithmetic length spectrum and define the Poinar\'e series
\[
\eta(s) = \sum_{x \in \G} e^{-sd(o,x)}.
\]
Let $v_d$ be the exponential growth rate of $\#\{x \in \G: d(o,x) < T\}$. Then $\eta$ admits an extension that is analytic on $\mathrm{Re}(s)  \ge v_d$ except for a simple pole with positive residue at $s= v_d$.
\end{theorem}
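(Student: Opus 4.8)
The plan is to transport the problem to the spectral theory of a one-parameter family of Ruelle transfer operators attached to the Cannon coding of $\G$, and then to read off the analytic continuation and the pole structure of $\eta$ from Ruelle--Perron--Frobenius theory together with the non-arithmeticity hypothesis. Fix a finite symmetric generating set and the associated Cannon automaton; its admissible paths issuing from the initial vertex form a geodesic combing and are therefore in bijection with $\G$. Passing to the subshift of finite type $(\Sigma^+,\sigma)$ of one-sided admissible sequences, I would use the strong hyperbolicity of $d$ --- concretely, the exponential convergence of Gromov products and the resulting H\"older Busemann cocycle on $\partial\G$ supplied by the material of Section~2 --- to construct a strictly positive H\"older potential $f\colon\Sigma^+\to\R$ such that, whenever $x\in\G$ is coded by an admissible word of length $n$, one has $d(o,x)=S_nf(\omega)+E(x)$ with $S_nf=\sum_{k=0}^{n-1}f\circ\sigma^{k}$, $\omega$ any sequence extending that word, and the error $E(x)$ uniformly bounded (in fact controlled by the distance of $x$ to the endpoints of the coding ray). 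It is precisely the exponential decay afforded by strong hyperbolicity, rather than mere hyperbolicity, that keeps this error tame enough to be absorbed later.

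With this encoding, $\eta(s)=\sum_{x\in\G}e^{-sd(o,x)}$ becomes, up to a factor $e^{-sE(x)}$ which is bounded on $\mathrm{Re}(s)\ge 0$, a weighted sum over admissible words of $e^{-sS_nf}$, which I would express through the transfer operators $L_{sf}g(x)=\sum_{\sigma y=x}e^{-sf(y)}g(y)$ on $C^{\alpha}(\Sigma^+)$ in the schematic form $\eta(s)=\Phi(s)+\langle (I-L_{sf})^{-1}u_s,\nu\rangle$, where $\Phi$ and the $C^{\alpha}$-valued map $s\mapsto u_s$ are holomorphic on $\{\mathrm{Re}(s)>v_d-\epsilon\}$ for some $\epsilon>0$ and $\nu$ is a functional recording the initial vertex. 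One must here confront the reducibility of the Cannon automaton: I would decompose the underlying graph along its strongly connected components, so that $\eta$ is a finite combination of series indexed by these components; each subdominant component contributes a term holomorphic on a half-plane strictly containing $\{\mathrm{Re}(s)\ge v_d\}$, and each maximal component, after a mixing recoding, is handled as follows. For real $s$ the operator $L_{sf}$ is quasi-compact on $C^{\alpha}$ with a simple leading eigenvalue $\lambda(s)=e^{P(-sf)}$ depending real-analytically on $s$ and strictly decreasing in $s$ (as $f>0$); the number $v_d$ is characterised by $P(-v_d f)=0$, i.e. $\lambda(v_d)=1$, which at once identifies $v_d$ with the exponential growth rate of the counting function and shows that $\eta$ converges absolutely on $\mathrm{Re}(s)>v_d$. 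Near $s=v_d$ spectral perturbation gives $L_{sf}=\lambda(s)\Pi_s+R_s$ with the remainder of spectral radius $<1$, so $(I-L_{sf})^{-1}$, and hence $\eta$, has a simple pole at $s=v_d$ whose residue is a positive multiple of $-1/\lambda'(v_d)=\bigl(\int f\,d\mu_{-v_d f}\bigr)^{-1}>0$, positivity of the constant coming from positivity of the leading eigenfunction and eigenmeasure paired against the positive data $u_{v_d}$ and $\nu$.

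It then remains to exclude poles on the rest of the line $\mathrm{Re}(s)=v_d$. For $s=v_d+it$ with $t\neq 0$ one must show that $L_{sf}$ has no eigenvalue of modulus $1$; by the standard dichotomy for transfer operators this can fail only if $itf$ is H\"older cohomologous to a locally constant function with values in $i\theta+2\pi i\Z$ for some $\theta\in\R$, which upon integration over periodic orbits forces the subgroup of $\R$ generated by $\{S_nf(\omega):\sigma^{n}\omega=\omega\}$ to be discrete. Invoking the correspondence between periodic orbits of $(\Sigma^+,\sigma)$ and non-torsion conjugacy classes of $\G$, along which the Birkhoff sums of $f$ realise the translation lengths $\ell_d[x]$ --- exactly, once one passes through the Mineyev flow, whose period spectrum is literally $\{\ell_d[x]\}$, or else up to a bounded error that a Liv\v{s}ic-type argument shows cannot affect discreteness of the generated group --- this contradicts the hypothesis that $\{\ell_d[x]:[x]\in\conj\}$ is non-arithmetic. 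Hence $L_{sf}$ has spectral radius $<1$ at every such $s$, and since quasi-compactness with spectral radius $<1$ is an open condition, this persists on a neighbourhood of each point of the punctured critical line; combined with absolute convergence on $\mathrm{Re}(s)>v_d$ and the pole analysis at $s=v_d$, these holomorphic pieces patch (by uniqueness of analytic continuation from the connected region $\mathrm{Re}(s)>v_d$) into the asserted extension, analytic on $\mathrm{Re}(s)\ge v_d$ except for a simple pole with positive residue at $s=v_d$.

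I expect the genuine difficulty to lie in this last step together with the bookkeeping of the first two: one has to set up the coding so that the passage from $d$-distances in $\G$ to Birkhoff sums on $\Sigma^+$ is faithful enough that non-arithmeticity of the length spectrum becomes \emph{equivalent} to aperiodicity of $f$, all the while keeping the reducibility of the Cannon automaton and the bounded coding errors under control; by contrast, once the potential has been built, the Ruelle--Perron--Frobenius input and the reduction to a resolvent are by now routine. It is worth noting that only a \emph{qualitative} spectral gap off the real axis is needed here, so no Dolgopyat-type estimates enter; those are reserved for the quantitative error term in Theorem~\ref{thm.1} and for the mixing results.
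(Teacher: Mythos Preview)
Your overall architecture---encode $d$ by a H\"older potential on the Cannon subshift, write $\eta$ through resolvents of transfer operators, use Ruelle--Perron--Frobenius for the simple pole at $v_d$, and rule out other poles on the critical line via the non-lattice criterion---is exactly the paper's route. You also correctly flag where the genuine work lies. But at two places your sketch asserts things that are not automatic and that constitute precisely the new content of the paper.

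First, when you decompose along strongly connected components and say ``each subdominant component contributes a term holomorphic on a half-plane strictly containing $\{\mathrm{Re}(s)\ge v_d\}$, and each maximal component\ldots'', you do not say maximal \emph{for what}. A component that is dominant for the word-length potential need not a priori be dominant for $-v_d f$, and vice versa; if these two notions disagree, your decomposition does not isolate the pole correctly (and one can even get higher-order poles if two $f$-maximal components were linked by a path). The paper resolves this with Proposition~\ref{prop.main}: the word-maximal components coincide with the $-v_d\Psi_d$-maximal components. One inclusion (Lemma~\ref{lem.comp1}) is a growth-quasi-tightness argument; the other (Lemma~\ref{lem.comp2}) genuinely uses the Mineyev flow and the measure-theoretic machinery of Section~\ref{Sec:invariant} to show that a $-v_d\Psi_d$-maximal component already realises the full Manhattan pressure curve. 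Your passing mention of the Mineyev flow does not supply this.

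Second, for the exclusion of poles at $v_d+it$, $t\neq 0$, you invoke ``the correspondence between periodic orbits of $(\Sigma^+,\sigma)$ and non-torsion conjugacy classes''. There is no such correspondence in general: a given maximal component sees only the loops that live inside it, and there is no a priori reason every conjugacy class is represented there. The paper proves (Proposition~\ref{prop.loops} and Corollary~\ref{cor.loops}) that for any word-maximal component there is a fixed $M$ such that some power $[g^{\pm M}]$ of every non-torsion class is realised by a loop in that component; the argument is combinatorial, using growth quasi-tightness and a pigeonhole on vertex--coset pairs. This is what turns non-arithmeticity of $\{\ell_d[x]\}$ into non-arithmeticity of $f$ on \emph{each} maximal component (Lemma~\ref{lem.non-arith}). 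Your Liv\v{s}ic remark does not bridge this gap: Liv\v{s}ic relates Birkhoff sums over periodic orbits that you already have; it cannot manufacture periodic orbits realising prescribed conjugacy classes.

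In short, the RPF and resolvent steps are indeed routine; the substance is (i) identifying the maximal components for the two different potentials, and (ii) showing each maximal component sees the whole length spectrum up to bounded powers. Both require arguments you have not provided.
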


Checking whether the length spectrum of a metric is non-arithmetic is usually difficult. However, we have the following result, essentially due to Gou\"ezel, Math\'eus and Macourant \cite[Proposition 3.2]{GMM2018}, that provides many examples. 

\begin{theorem}\cite{GMM2018} \label{thm.ls}
Suppose that $\G$ is a non-elementary hyperbolic group that is not virtually free. Then, every strongly hyperbolic metric $d \in \Dc_\G$ has non-arithmetic length spectrum.
\end{theorem}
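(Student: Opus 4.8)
The plan is to follow the strategy of \cite{GMM2018} and prove the contrapositive: assuming $d\in\Dc_\G$ is strongly hyperbolic with an \emph{arithmetic} length spectrum, say $\{\ell_d[x]:[x]\in\conj\}\subseteq a\Z$ for some $a>0$, I would deduce that $\G$ is virtually free. The mechanism is that arithmeticity forces a cross-ratio type function on $\partial\G$ built from Gromov products to take values in the discrete set $\tfrac a2\Z$, and this is incompatible with $\partial\G$ containing a topological arc.

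\emph{Step 1: consequences of strong hyperbolicity.} For strongly hyperbolic $d$ the Gromov product $(\cdot\mid\cdot)_o$ extends continuously to pairs of distinct points of $\G\cup\partial\G$, is finite there, and tends to $+\infty$ as its two arguments approach a common boundary point; moreover the Busemann cocycle $\beta^d$ is well defined and satisfies $\beta^d_\xi(o,y)=d(o,y)-2(y\mid\xi)_o$, whence (by the cocycle identity, $\G$-equivariance and $\ell_d[g^n]=n\,\ell_d[g]$) one obtains $\ell_d[g]=\beta^d_{g^+}(o,g^{-1})$ and the expansion $d(o,g^n)=n\,\ell_d[g]+2(g^-\mid g^+)_o+o(1)$ for every loxodromic $g$, with $g^\pm\in\partial\G$ its repelling/attracting fixed points. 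Now take loxodromics $g,h$ with $\{g^\pm\}\cap\{h^\pm\}=\emptyset$. By north--south dynamics $g^Nh^M$ is loxodromic for $N,M$ large, with $(g^Nh^M)^{-1}o\to h^-$ and $(g^Nh^M)^+\to g^+$ as $N,M\to\infty$; combining this with the identity $d(o,g^Nh^M)=d(o,g^N)+d(o,h^M)-2(g^{-N}\mid h^M)_o$, the expansion above, and $\ell_d[w]=\beta^d_{w^+}(o,w^{-1})=d(o,w)-2(w^{-1}\mid w^+)_o$, one gets
\[
\ell_d[g^Nh^M]-N\,\ell_d[g]-M\,\ell_d[h]\ \longrightarrow\ -2\,\Phi(g^-,g^+,h^-,h^+)\qquad(N,M\to\infty),
\]
where $\Phi(a,b,c,d):=(a\mid d)_o+(c\mid b)_o-(a\mid b)_o-(c\mid d)_o$. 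Since the left-hand side lies in the closed set $a\Z$ for all $N,M$, so does the limit, so $\Phi(g^-,g^+,h^-,h^+)\in\tfrac a2\Z$ for every such pair. As loxodromic fixed-point quadruples are dense in the set of quadruples of pairwise distinct boundary points and $\Phi$ is continuous there, $\Phi$ is $\tfrac a2\Z$-valued on that whole set.

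\emph{Step 2: finding an arc and concluding.} If $\G$ were not virtually free, $\partial\G$ would contain a topological arc: by accessibility $\G$ is the fundamental group of a finite graph of groups with finite edge groups and finite or one-ended vertex groups, and not every vertex group can be finite (else $\G$ is virtually free), so some vertex group $H$ is one-ended and quasi-convex; then $\partial H$ is a nondegenerate Peano continuum (Bestvina--Mess, Swarup), hence arc-connected, and it embeds in $\partial\G$ as the limit set of $H$. Fix an arc $\gamma\colon[0,1]\to\partial\G$, set $d_0:=\gamma(1)$, and pick distinct $b_0,c_0\in\partial\G\setminus\gamma([0,1])$ (which exist because $\partial\G$ is not a topological arc --- it has no global cut point when $\G$ is one-ended, and is disconnected otherwise --- so $\partial\G\setminus\gamma([0,1])$ is a nonempty open subset of the perfect space $\partial\G$, hence infinite). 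For $t\in[0,1)$ the points $\gamma(t),b_0,c_0,d_0$ are pairwise distinct, so $t\mapsto\Phi(\gamma(t),b_0,c_0,d_0)$ is a continuous $\tfrac a2\Z$-valued function on the connected interval $[0,1)$, hence constant; yet $(\gamma(t)\mid d_0)_o\to+\infty$ as $t\to1^-$ while $(c_0\mid b_0)_o$ and $(c_0\mid d_0)_o$ stay fixed and $(\gamma(t)\mid b_0)_o\to(d_0\mid b_0)_o<\infty$, so $\Phi(\gamma(t),b_0,c_0,d_0)\to+\infty$ --- a contradiction. Hence $\G$ is virtually free, as desired.

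I expect the main obstacle to be Step 1: upgrading the coarse ($O(1)$) additivity of a Gromov-hyperbolic metric along concatenations to asymptotic identities with \emph{vanishing} error, and pinning down the limiting corrections as boundary Gromov products. This is exactly where the strongly hyperbolic hypothesis is indispensable --- for a general hyperbolic metric $d(o,g^n)-n\,\ell_d[g]$ need not even converge --- and it also underpins the exact relation $\beta^d_\xi(o,y)=d(o,y)-2(y\mid\xi)_o$, the formula $\ell_d[w]=\beta^d_{w^+}(o,w^{-1})$, and the north--south convergence of the fixed points of $g^Nh^M$. A smaller but still nontrivial ingredient is the geometric group theory input of Step 2 that a non-virtually-free hyperbolic group carries an arc in its boundary.
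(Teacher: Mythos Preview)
Your argument is correct and is essentially the content that lies \emph{behind} the paper's proof rather than a different idea. The paper's own proof is a two-line reduction: since $d$ is strongly hyperbolic the Busemann function $\beta_o$ is a genuine H\"older cocycle, and for hyperbolic $x$ one has $\beta_o(x,x^+)=-\beta_o(x,x^-)=\pm\ell_d[x]\neq 0$; the paper then invokes \cite[Proposition~3.2]{GMM2018} as a black box, which says that if $\G$ is not virtually free and a H\"older cocycle has arithmetic periods, then some hyperbolic $x$ satisfies $\beta_o(x,x^+)=\beta_o(x,x^-)$ --- contradiction. What you have done in Steps~1--2 is to unpack exactly that proposition: you show that arithmeticity forces the boundary cross-ratio $\Phi$ to be $\tfrac{a}{2}\Z$-valued, and then use an arc in $\partial\G$ (available precisely when $\G$ is not virtually free) to force $\Phi$ to take a continuum of values. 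So your route is more self-contained and makes the role of strong hyperbolicity completely transparent, while the paper's is a one-line citation; at the level of underlying ideas the two coincide.

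Two small points to tighten in your write-up. First, in Step~1 the convergences $(g^Nh^M)^+\to g^+$ and $(g^Nh^M)^{-1}o\to h^-$ and the resulting limit $\ell_d[g^Nh^M]-N\ell_d[g]-M\ell_d[h]\to -2\Phi(g^-,g^+,h^-,h^+)$ are cleanest as iterated limits (say $M\to\infty$ then $N\to\infty$) rather than a joint limit; this is enough for your purposes since the left side lies in $a\Z$ for every $N,M$, so any accumulation point does too. Second, your formula $\ell_d[w]=d(o,w)-2(w^{-1}\mid w^+)_o$ is indeed correct (apply the identity to $w^{-1}$ and use $(w^{-1})^-=w^+$, $\ell_d[w^{-1}]=\ell_d[w]$), but it differs in form from the paper's convention $\beta_o(x,x^\pm)=\pm\ell_d[x]$; it would help the reader to flag that you are using the ``$w^{-1}$'' variant.
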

We will prove this result in Section \ref{sec.prelim}. 
\begin{remark}
Note that free groups that act convex cocompactly by isometries on the hyperbolic plane provide examples of strongly hyperbolic metrics that have non-arithmetic length spectrum. In particular, virtually free groups can still admit strongly hyperbolic metrics that satisfy our theorems.
\end{remark}
We now outline the methods we use to prove these results. It has long been know that hyperbolic groups are susceptible to analysis through the use of ergodic theory and in particular symbolic dynamics. This is thanks to the work of Cannon \cite{Cannon} and Ghys and de la Harpe \cite{GhysdelaHarpe}: hyperbolic groups and their generating sets admit a combinatorial coding that allows for them to, in some sense, be encoded or represented by a dynamical system, specifically a subshift of finite type (see Section 2.4 for a precise explanation of this). This subshift of finite type arises from a finite directed graph for which the paths in this graph correspond to group elements. Unfortunately Cannon's construction does not in general shed any light on the connectedness properties of this graph (and it is possible to find Cannon codings that are not connected). Understanding these properties is crucial when one wants to apply techniques from ergodic theory. Indeed, the connectedness of the graph corresponds to the mixing properties of the corresponding subshift of finite type. In general, given a hyperbolic group $\G$ and finite  generating set $S$ we know that there exists a finite directed graph that encodes the pair $\G, S$ and that this directed graph $\mathcal{G}$ has finitely many connected components, i.e. we can decompose the vertex set of $\mathcal{G}$ into a finite collection of subsets such that if two vertices are in the same subset then there is a loop in $\mathcal{G}$ containing both of these vertices. Each of these connected components gives rise to a subshift of finite type and so we can represent a pair $\G,S$ (where $\G$ is a hyperbolic group and $S$ a finite generating set) by a finite collection of dynamical systems $\Sigma_1, \ldots, \Sigma_m$. We can apply results from thermodynamic formalism to each of these components individually. For example, given  H\"older continuous functions $r_j: \Sigma_j \to \R$ on each $\Sigma_j$ for $j=1, \ldots, m$ we obtain analytic pressure curves $\text{P}_j(-sr_j)$ for $s \in \R$  coming from a variational principle (see Section 2.5). A priori there is no reason for the thermodynamic data/properties of these curves  to coincide. However, a beautiful argument of Calegari and Fujiwara \cite{CalegariFujiwara2010} exploiting the ergodicity of certain Patterson-Sullivan measures on the boundary of the group, shows that, when each $r_1, \ldots, r_m$ correspond to certain functions on the group $\G$, their pressure curves $\text{P}_j(-sr_j)$  have the same first and second derivatives at $0$. 
 This result allows Calegari and Fujiwara and subsequently others to apply techniques from thermodynamic formalism to study hyperbolic groups despite the possible lack of structure for the Cannon graph. For example Gou\"ezel has applied (a generalised version of) this argument to obtain local limit theorems for the transition probabilities associated to random walks on hyperbolic groups \cite{GouezelLocalLimit}. In fact Gou\"ezel  has constructed a generalised thermodynamic framework that applies to subshifts and potentials coming from the study of hyperbolic groups. Other works that utilise the Calegari-Fujiwara argument include \cite{gouezel-lalley}, \cite{GTT2}, \cite{stats}, \cite{cantrell.sert}. We note that an important step in proving our theorems will be to improve upon this argument: we show that in fact all of the derivatives (not just the first and second derivatives) agree, i.e. the pressure functions are identical. 

The ideas and techniques developed in the aforementioned works can be used to compare the pressure curves $\text{P}_j(-sr_j)$ for $s \in \R$. Unfortunately, the techniques that have been developed so far can not be applied to study complex extensions of these curves. That is, they can not be used to compare the pressures $\text{P}_j(-sr_j)$  as $s$ varies in $\C$; particularly when $s$ is far from the real axis. To understand the domain of analyticity for our Poincar\'e series in Theorem \ref{thm.2} we need to overcome this issue. Roughly speaking the Calegari-Fujiwara argument allows us to characterise when the potentials $r_j$ on each $\Sigma_j$ are essentially constant functions (constant up to a coboundary). To prove results we need to understand more subtle properties of the functions $r_j$; we need to know when each $r_j$ on $\Sigma_j$ takes values in a  lattice $a\Z$ for some $a\in \R$. We therefore need to develope a better understanding of the structure of the Cannon graph. To do this we use ideas from \cite{cantrell.tanaka.2} which rely on the Mineyev topological flow: an analogue of a geodesic flow in our coarse geometric setting. To prove our results we take the following steps.
\begin{enumerate}
\item We use the Mineyev topological flow and the characterisation of flow invariant measures from \cite{cantrell.tanaka.2} to prove important structural results for the Cannon coding. This step relies on exploiting the ergodicity of the Mineyev flow.
\item Following on from the previous point, we improve upon the Calegari-Fujiwara argument mentioned earlier in the introduction: we show that  the derivatives (of every order) of certain pressure functions coincide. 
\item Using a combinatorial argument we show that, within certain components of the Cannon coding, it is possible to see all except the torsion conjugacy classes in the group as periodic orbits or loops.
\item Combining the previous two points, we show that having non-arithmetic length spectrum has important consequences for the cohomological properties of potentials on the coding.
\end{enumerate}
\begin{remark}
Proposition \ref{prop.main} is the key structural result mentioned in (1) above. This surprising result essentially allows us to work with the Cannon coding as if it were a single connected component. This helps us to overcome multiple issues that have previously prevented the full  application of thermodynamic formalism and symbolic dynamics to the shift space coming from the Cannon coding.
This proposition will likely have many other applications to the ergodic theoretic study of hyperbolic groups and geometries.
\end{remark}
Once we have completed the above steps, we can follow, along with a few technical alterations, the thermodynamic proof mentioned above to deduce Theorem \ref{thm.1} and Theorem \ref{thm.2}. 

Another way to tackle orbital counting problems is to use the method employed by Roblin in \cite{Roblin} which was mentioned at the beginning of the introduction. This method relies on exploiting the mixing properties of a geodesic flow. It may be possible to prove Theorem \ref{thm.1} in a similar fashion by using the Mineyev topological flow. However, we did not attempt to prove it in this way, as it is not obvious how to show Theorem \ref{thm.2} using this method. Although this is the case, the ideas used to prove Theorem \ref{thm.1} and Theorem \ref{thm.2} can be used to prove mixing results for the Mineyev topological flow. In the final section of this article we prove that the Mineyev flow is weak mixing under suitable (very mild) assumptions, see Section \ref{subsec.Mineyev}.

We now briefly outline the organisation of this article. In the first section we present examples and applications of our results. We then discuss some preliminary material related to hyperbolic groups, the Mineyev flow and the Cannon coding. After this, in Section \ref{section.codingstructure} we prove important structural results for the Cannon coding using ideas involving the Mineyev flow. In the following section, Section \ref{section.proof} we deduce Theorems \ref{thm.1} and Theorem \ref{thm.2}. We prove the rest of our results in the final section.


\subsection*{Acknowledgements} 
The author would like to thank Ryokichi Tanaka, Caleb Dilsavor and Eduardo Oreg\'on-Reyes for stimulating conversations and suggestions regarding this work. We also thank Richard Sharp for helpful comments and for explaning his work with R. Schwartz \cite{schwarz.sharp}.

We now present some applications of our results. Throughout the rest of this section, $\G$ will be assumed to by a non-elementary hyperbolic group.

\subsection{Random walks and the Green metric}
Our first application is to random walks.
Let $p$ be a probability measure on $\G$ that is finitely supported. We say that $p$ is admissible if the support of $p$ generates $\G$ as a semi-group. Suppose that $p$ is a finitely supported, symmetric and admissible probability measure on $\G$. If we define the Green function
\[
G_p(x,y) = \sum_{n=0}^\infty p^{\ast n} (x^{-1}y) \ \ \text{ for } \ x,y\in \G
\]
where $p^{\ast n}$ denotes the $n$-fold convolution of $p$, then the Green metric $d_p$ is defined as
\[
d_p(x,y) = - \log \left( \frac{G_p(x,y)}{G_p(o,o)} \right) \ \  \text{ for $x,y \in \G$ and where $o \in \G$ is the identity}.
\]
It is a consequence of the Ancona inequalities \cite{GouezelAncona} that this metric is strongly hyperbolic. In particular Theorem \ref{thm.1} and Theorem \ref{thm.ls} apply and we obtain:

\begin{corollary}\label{thm.GM}
Suppose that $d_p \in \Dc_\G$ is the Green metric associated to an admissible, finitely supported and symmetric random walk on a hyperbolic group $\G$. Assume further that $\G$ is not virtually free. Then, there exist $C >0$ such that
\[
\#\{x \in \G: d_p(o,x) < T\} \sim C e^{T}
\]
as $T \to \infty$. If there exist $[x], [y] \in \conj'$ such that $\ell_d[x]/\ell_d[y]$ is badly approximable then there exists $\k >0$ such that
\[
\#\{x \in \G: d_p(o,x) < T\} = C e^{ T}(1+O(T^{-\k}))
\]
as $T\to\infty$.
\end{corollary}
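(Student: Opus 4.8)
The plan is to read off Corollary~\ref{thm.GM} from Theorem~\ref{thm.1} and Theorem~\ref{thm.ls}: the analytic heart of the matter is already contained in those statements, so what remains is to check that the Green metric $d_G$ of an admissible, finitely supported, symmetric random walk $p$ satisfies their hypotheses. Concretely there are three things to verify: (i) that $d_G$ is a genuine left-$\G$-invariant metric belonging to $\Dc_\G$; (ii) that $d_G$ is strongly hyperbolic; and (iii) that, when $\G$ is not virtually free, $d_G$ has non-arithmetic length spectrum. Given these, Theorem~\ref{thm.1} applies verbatim and produces both conclusions of the corollary.

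For (i) I would recall the standard facts about the Green function. Left-invariance of $d_G$ is immediate from $G(x,y)=G(o,x^{-1}y)$; the triangle inequality follows from sub-multiplicativity of the hitting probabilities $F(x,y)=G(x,y)/G(o,o)$ (to reach $z$ from $x$ one may first reach $y$); and $d_G(x,y)=0$ forces $x=y$ since $p$ is admissible and symmetric. To conclude $d_G\in\Dc_\G$ it then suffices to show $d_G$ is quasi-isometric to a word metric $d_S$, which is classical for finitely supported admissible walks: a lower bound $G(o,x)\ge c^{\,d_S(o,x)}$ for some $c\in(0,1)$ comes from following a word-geodesic with the walk (using that $\mathrm{supp}(p)$ generates $\G$ as a semigroup), while a matching exponential upper bound $G(o,x)\le C\rho^{\,d_S(o,x)/R}$ comes from $p^{*n}(x)\le\rho^n$ (where $\rho=\rho(p)<1$ is the spectral radius, $\rho<1$ because non-elementary hyperbolic groups are non-amenable) together with the fact that $p^{*n}(x)=0$ for $n<d_S(o,x)/R$, $R=\max_{s\in\mathrm{supp}(p)}d_S(o,s)$. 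Hence $d_G(x,y)=d_G(o,x^{-1}y)\asymp d_S(o,x^{-1}y)=d_S(x,y)$, so the identity map is a quasi-isometry and, $d_S$ being hyperbolic, $(\G,d_G)$ is a $\G$-invariant hyperbolic metric quasi-isometric to a word metric.

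Point (ii) is exactly the content of the Ancona inequalities in the form established by Gou\"ezel \cite{GouezelAncona}: when $\G$ is hyperbolic and $p$ is finitely supported one has $G(x,z)\asymp G(x,y)G(y,z)$ whenever $y$ lies within bounded distance of a geodesic from $x$ to $z$, with a uniform implied constant; after applying $-\log$ this is precisely the Gromov-product estimate defining strong hyperbolicity in the sense of Nica--\v{S}pakula, so $d_G$ is strongly hyperbolic. For (iii) I would simply invoke Theorem~\ref{thm.ls}: since $\G$ is assumed not virtually free, every strongly hyperbolic metric in $\Dc_\G$, in particular $d_G$, has non-arithmetic length spectrum. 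Then Theorem~\ref{thm.1} yields $C,\delta>0$ with $\#\{x\in\G:d_G(o,x)<T\}\sim Ce^{\delta T}$, and under the extra hypothesis that some $[x],[y]\in\conj'$ have $\ell_{d_G}[x]/\ell_{d_G}[y]$ badly approximable, its second conclusion upgrades this to $\#\{x\in\G:d_G(o,x)<T\}=Ce^{\delta T}(1+O(T^{-\k}))$ for some $\k>0$.

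The corollary itself contains no real difficulty: it is a verification of hypotheses. The one substantive input doing the work is the strong hyperbolicity of $d_G$, which rests on the Ancona inequalities and hence crucially on $p$ being finitely supported and $\G$ hyperbolic; this is the point where any generalisation (for instance to walks with superexponential tails) would have to be re-argued. The hypothesis that $\G$ not be virtually free is used solely to call upon Theorem~\ref{thm.ls}, and could be replaced by the direct assumption that $d_G$ have non-arithmetic length spectrum, which, as in the remark after Theorem~\ref{thm.ls}, can still hold for virtually free $\G$, e.g.\ for walks on free groups acting convex-cocompactly on the hyperbolic plane.
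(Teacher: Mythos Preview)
Your proposal is correct and follows exactly the paper's approach: the corollary is deduced by observing that the Green metric is strongly hyperbolic (via the Ancona inequalities \cite{GouezelAncona}) and then invoking Theorem~\ref{thm.1} together with Theorem~\ref{thm.ls}. Your write-up simply fills in the standard details (left-invariance, quasi-isometry to a word metric, etc.) that the paper leaves implicit.
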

This result complements a result of Gou\"ezel \cite[Theorem 1]{GouezelAncona} (see also \cite{gouezel-lalley}) that provides the asymptotics growth rate of the transition probabilities $p^{\ast n}(x^{-1}y)$ as $n\to\infty$ for pairs of elements $x,y\in\G$.
Theorem \ref{thm.2} similarly applies and we obtain a result on the domain of analyticity for the Poincar\'e series for the Green metric.
This same discussion can be applied to the Mineyev hat metric $\widehat{d}$ which was constructed in \cite{MineyevFlow}. We obtain counting results and results regarding the Poincar\'e series for this metric. Note also, that as discussed in the introduction, we may replace the assumption that $\G$ is virtually free with the assumption that the length spectrum of $d$ is non-arithmetic.

\subsection{Mixing of the Mineyev flow} \label{subsec.Mineyev}
Our next application is regarding the Mineyev topological flow.
\begin{theorem} \label{thm.mixing}
Let $d$ be a strongly hyperbolic metric in $\Dc_\G$ and suppose that the length spectrum of $d$ is non-arithmetic. Then the Mineyev topological flow constructed using $d$ can be coded by a suspension flow $\textnormal{Sus}(\overline{\Sigma}_0, r_0)$ over a transitive subshift of finite type $\overline{\Sigma}_0$ and furthermore, this suspension flow is weak mixing. In particular, the Mineyev topological flow constructed using a strongly hyperbolic metric with non-arithmetic length spectrum is weak mixing.
\end{theorem}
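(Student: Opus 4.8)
The plan is to deduce Theorem \ref{thm.mixing} from the structural and cohomological results of Sections \ref{section.codingstructure} and \ref{section.proof}, combined with the standard realisation of the Mineyev topological flow as a suspension flow. First I would use the Cannon coding together with the structural results of Section \ref{section.codingstructure} to single out a distinguished, transitive component $\overline{\Sigma}_0$ of the coding and a strictly positive H\"older roof function $r_0$ on it encoding the metric $d$, such that the suspension flow $\textnormal{Sus}(\overline{\Sigma}_0,r_0)$ semiconjugates onto the Mineyev topological flow constructed from $d$. The key point is that, by the characterisation of flow-invariant measures in \cite{cantrell.tanaka.2} and the ergodicity of the Mineyev flow exploited in Section \ref{section.codingstructure}, this semiconjugacy is a measure-theoretic isomorphism with respect to the measure of maximal entropy (equivalently, the Bowen--Margulis--Sullivan measure of the flow). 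Hence weak mixing of $\textnormal{Sus}(\overline{\Sigma}_0,r_0)$ is equivalent to weak mixing of the Mineyev flow, and it suffices to establish the former.

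Next I would invoke the classical dichotomy for suspension flows over a transitive subshift of finite type with strictly positive H\"older roof: such a flow fails to be weak mixing if and only if the roof $r_0$ is cohomologous, through a H\"older transfer function, to a function taking values in $a\Z$ for some $a>0$. It therefore remains to exclude this possibility under the hypothesis that $d$ has non-arithmetic length spectrum, which is exactly where steps (3) and (4) of the introductory outline enter. By the combinatorial argument of step (3), every non-torsion conjugacy class $[x]\in\conj'$ is realised as a periodic orbit inside $\overline{\Sigma}_0$, and the period of that orbit---the Birkhoff sum of $r_0$ around it---equals $\ell_d[x]$ (adding a coboundary does not change periods). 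Consequently, were $r_0$ cohomologous to an $a\Z$-valued function, we would have $\ell_d[x]\in a\Z$ for every $[x]\in\conj'$; since torsion classes contribute zero translation length, the full length spectrum $\{\ell_d[x]:[x]\in\conj\}$ would then lie in $a\Z$, contradicting non-arithmeticity. Thus $\textnormal{Sus}(\overline{\Sigma}_0,r_0)$ is weak mixing, and therefore so is the Mineyev topological flow.

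The main obstacle is entirely in the first step: producing a single transitive component $\overline{\Sigma}_0$ that simultaneously carries the full dynamics of the Mineyev flow (so that the measure isomorphism above holds) and is rich enough to see all non-torsion conjugacy classes as loops. Neither property is automatic, since Cannon's graph may be badly disconnected; both are supplied only by the work of Section \ref{section.codingstructure}, which leans on the Mineyev flow and the measure classification of \cite{cantrell.tanaka.2}, together with the periodic-orbit correspondence of step (3). Once these inputs are in hand, the remaining deduction---reducing weak mixing to the cohomological triviality of $r_0$ and then contradicting it via translation lengths---parallels the argument used in Section \ref{section.proof} to prove Theorems \ref{thm.1} and \ref{thm.2} and requires only routine bookkeeping. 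A secondary technical point is that for a merely transitive (as opposed to topologically mixing) subshift the cohomological criterion for weak mixing must be applied with a little care; this can be dealt with by passing to the mixing components of $\overline{\Sigma}_0$, where the periodic-orbit/translation-length correspondence persists.
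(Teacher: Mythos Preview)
Your proposal is correct and follows essentially the same route as the paper: use Theorem~\ref{thm.enhanced} to model the Mineyev flow as $\textnormal{Sus}(\overline{\Sigma}_0,r_0)$ with $\overline{\Sigma}_0$ a $-v_d\Psi_d$-maximal component and $r_0=S_N\Psi_d$, invoke Proposition~\ref{prop.main} to identify this with a word-maximal component, and then use Corollary~\ref{cor.loops} (via Lemma~\ref{lem.non-arith}) to rule out the lattice alternative for $r_0$. Two small remarks: (i) the measure-theoretic isomorphism you invoke is not needed, since weak mixing in the spectral sense passes directly to factors (an eigenfunction on $\Fc_\k$ pulls back through the surjective, flow-equivariant map $\Pi_0$ of Theorem~\ref{thm.enhanced}); and (ii) Corollary~\ref{cor.loops} only realises a \emph{fixed power} $[g^{\pm M}]$ as a loop, not $[g]$ itself, so the Birkhoff sum equals $M\ell_d[g]$ rather than $\ell_d[g]$---this changes nothing in the arithmetic contradiction, but your sentence ``the period of that orbit \ldots\ equals $\ell_d[x]$'' is a slight overstatement.
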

\begin{remark}
i) The new content in this theorem is that the suspension flow is weak mixing when $d$ has non-arithmetic length spectrum. Indeed, the existence of a suspension flow coding for general strongly hyperbolic metrics was shown by Cantrell and Tanaka in \cite{cantrell.tanaka.2}. 

\noindent ii) By Theorem \ref{thm.ls} a Mineyev flow on a non-virtually free hyperbolic group is necessarily weak mixing.

\noindent iii) When there exist two conjugacy classes for which the quotient of the corresponding translation distances is badly approximable, our method shows that the Mineyev flow is rapid mixing.
\end{remark}
The terminology used in this result will be explained in Section \ref{section.mixing}.

\subsection{Correlation numbers}
As a corollary of our methods we can prove the following result that provides an asymptotic comparison between pairs of metrics.
\begin{theorem}\label{thm.cor}
Suppose that $d,d_\ast \in \Dc_\G$ are both strongly hyperbolic with exponential growth rate $1$ and that they are independent, i.e. if $a,b \in \R$ satisfy
\[
a\ell_d[x] + b\ell_{d_\ast}[x] \in \Z
\]
for all $x\in\G$, then $a = b =0.$ Then for any fixed $\epsilon >0$ there exist constants $ C > 0, 0 <\alpha < 1$ such that
\[
\#\{ x \in \G: d(o,x) \le T , \ |d_\ast(o,x) - d(o,x)| \le \epsilon   \} \sim \frac{Ce^{\alpha T}}{\sqrt{T}}
\]
as $T\to\infty$.
\end{theorem}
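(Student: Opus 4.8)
The plan is to reduce the joint counting problem to a two-parameter thermodynamic problem on the Cannon coding and then apply a Tauberian/Cauchy-integral argument with the extra transversal constraint handled by a local central limit heuristic. First I would invoke the structural results for the Cannon coding (the preliminary steps outlined in the introduction): both metrics $d$ and $d_\ast$ are strongly hyperbolic in $\Dc_\G$, so each can be encoded, up to a bounded error and up to the usual Calegari--Fujiwara comparison of components, as Hölder potentials $r$ and $r_\ast$ on the transitive subshift $\overline{\Sigma}_0$ appearing in Theorem \ref{thm.mixing}. The independence hypothesis $a\ell_d[x] + b\ell_{d_\ast}[x] \in \Z$ for all $x$ forcing $a=b=0$ translates, via the improved Calegari--Fujiwara argument (all derivatives of the pressure functions agree across components, and lattice values of the potentials are detected by translation lengths of periodic orbits) together with step (3) of the introduction (non-torsion conjugacy classes appear as loops), into the statement that the pair $(r, r_\ast)$ is \emph{jointly non-lattice}: there is no nonzero $(a,b)\in\R^2$ with $a r + b r_\ast$ cohomologous to a $\Z$-valued function on $\overline{\Sigma}_0$. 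This is the hypothesis one needs for the joint transfer operator to have the right spectral behaviour.

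Next I would set up the two-variable Poincaré (partition) function
\[
\eta(s,t) = \sum_{x\in\G} e^{-s d(o,x) - t (d_\ast(o,x) - d(o,x))},
\]
which for $t=0$ reduces to $\eta(s)$ from Theorem \ref{thm.2}. Using the coding, $\eta(s,t)$ is, up to an analytic correction coming from the bounded coding error and from the lower-dimensional pieces of the Cannon graph, comparable to $\sum_{n}\sum_{|w|=n} e^{-s r_n(w) - t (r_{\ast,n}(w)-r_n(w))}$, i.e. to a generating function for the family of transfer operators $\mathcal{L}_{s,t} = \mathcal{L}_{-s r - t(r_\ast - r)}$ on $\overline{\Sigma}_0$. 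Since both metrics have exponential growth rate $1$, the leading eigenvalue $\lambda(s,t)$ of $\mathcal{L}_{s,t}$ satisfies $\lambda(1,0)=1$, and $s\mapsto \lambda(s,0)$ is strictly decreasing with $\lambda'(1,0)<0$, while joint non-latticeness gives $|\lambda(s,t)|<\lambda(\mathrm{Re}(s),0)$ whenever $(\mathrm{Im}(s),t)\neq(0,0)$ and, by the rapid-mixing machinery behind Theorem \ref{thm.2}, a strip of analyticity and good decay of the resolvent of $\mathcal{L}_{s,t}$ past the line $\mathrm{Re}(s)=1$ away from $(s,t)=(1,0)$.

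Then I would extract the asymptotics of $N(T,\epsilon) := \#\{x : d(o,x)\le T,\ |d_\ast(o,x)-d(o,x)|\le\epsilon\}$ by writing it as a double inverse transform: an inverse Laplace transform in $s$ (Tauberian step, exactly as for Theorem \ref{thm.2}, yielding the factor $e^{\alpha T}$ where $\alpha$ is the root of $\lambda(\alpha,0)=1$ — note $\alpha<1$ because restricting to $|d_\ast - d|\le\epsilon$ is a genuine restriction and the joint pressure is strictly convex, so $\alpha = $ the value at which the constrained growth rate is achieved) and an inverse Fourier transform in $t$ against the indicator of $[-\epsilon,\epsilon]$. The appearance of $1/\sqrt{T}$ is the signature of a \emph{one-dimensional} local central limit theorem: conditioned on $d(o,x)\approx T$, the quantity $d_\ast(o,x)-d(o,x)$ is asymptotically Gaussian with variance of order $T$ (the variance being positive precisely because of independence, via the asymptotic variance formula $\sigma^2 = \partial_t^2$ of the relevant pressure at the critical point, which is nonzero by joint non-latticeness/independence), so the probability it lands in a fixed window $[-\epsilon,\epsilon]$ decays like $C\epsilon/\sqrt{2\pi\sigma^2 T}$; absorbing $\epsilon$ and the constants into $C$ gives the claimed $Ce^{\alpha T}/\sqrt{T}$. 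Concretely, after shifting the $s$-contour to $\mathrm{Re}(s)=\alpha$ and picking up the simple pole, the remaining integral over $t$ is a Gaussian integral whose stationary-phase evaluation produces the $T^{-1/2}$.

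The main obstacle I expect is twofold. First, and most seriously, the transfer-operator spectral analysis must be carried out \emph{uniformly in $t$} and with control near $(s,t)=(1,0)$: one needs that $\lambda(s,t)$ is analytic near that point, that its Hessian in $(s,t)$ is nondegenerate (this is where independence is used in an essential, quantitative way — it guarantees $\det\mathrm{Hess}\,\lambda \neq 0$ so that the saddle-point/LCLT argument is valid), and that away from a neighbourhood of $(1,0)$ one has a spectral gap with resolvent bounds polynomial in the imaginary parts, so the contour-shift error terms are genuinely lower order. This requires porting the rapid-mixing estimates underlying Theorem \ref{thm.2} (which there needed the badly-approximable hypothesis) to the present setting; since here we only claim a $\sim$ asymptotic and not a power-saving error term, plain non-latticeness plus the Dolgopyat-type bounds available for the single metric should suffice, but combining the two potentials and keeping the dependence on $t$ clean is the delicate part. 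Second, there is the bookkeeping of replacing $\G$ by the coding: the Cannon coding overcounts/undercounts by bounded multiplicities and the lower-dimensional components contribute; one must check, as in the proof of Theorem \ref{thm.2}, that these contributions are subexponential or at least do not affect the leading term, and that the Calegari--Fujiwara-type argument still forces the components not carrying the full growth to be negligible for the \emph{joint} problem. Once these analytic inputs are in place, the passage from $\eta(s,t)$ to $N(T,\epsilon)$ is a routine, if somewhat lengthy, double Tauberian argument.
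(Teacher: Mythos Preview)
Your overall architecture---encode both metrics as H\"older potentials on the Cannon coding, study a two-parameter family of transfer operators, extract the asymptotic via a saddle-point/local-CLT argument---is the paper's, but there is a genuine gap in how you identify $\alpha$ and in the heuristic for the $T^{-1/2}$. You write that $\alpha$ is ``the root of $\lambda(\alpha,0)=1$''; since $v_d=1$ this forces $\alpha=1$, contradicting $\alpha<1$. The error lies in the LCLT heuristic: you assert that, conditioned on $d(o,x)\approx T$, the difference $d_\ast(o,x)-d(o,x)$ is approximately Gaussian centred at zero with variance of order $T$, so that landing in $[-\epsilon,\epsilon]$ costs only $1/\sqrt{T}$. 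But the mean is \emph{not} zero. For a $d$-typical $x$ one has $d_\ast(o,x)\approx\tau T$ with $\tau$ the intersection number, and when $d,d_\ast$ are not roughly similar $\tau\neq 1$ (this is the content of $\theta_{d_\ast/d}'(0)<-1<\theta_{d_\ast/d}'(1)$). Hence $d_\ast-d$ carries a linear drift, and the probability of hitting a bounded window decays \emph{exponentially}, not like $T^{-1/2}$; that exponential factor is precisely $e^{(\alpha-1)T}$, and your argument does not produce it.

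The repair is a tilt: shift the real part of the $t$-variable to the unique $\xi$ at which the drift vanishes, equivalently $\theta_{d_\ast/d}'(\xi)=-1$. The paper does exactly this, working with
\[
\omega(s,t)=\exp\sum_{x\in\G}|x|_S^{-1}\,e^{-sd(o,x)-(\xi+it)(d_\ast(o,x)-d(o,x))},
\]
and showing that its $s$-pole sits at $\alpha=\xi+\theta_{d_\ast/d}(\xi)<1$ (strictly, by strict convexity of the Manhattan curve), with local form $\phi(s,t)/(s-s(t))$ near $(\alpha,0)$ where $s(\cdot)$ has vanishing gradient and negative-definite real Hessian at $0$. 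The asymptotic then follows Sharp's method (via Katsuda--Sunada), with a final large-deviations step to strip the weight $d(o,x)/|x|_S$ introduced by the logarithmic derivative. Your outline becomes correct once the tilt by $\xi$ is inserted and $\alpha$ redefined accordingly; without it the argument does not close.
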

\begin{remark}
The assumption that both $d,d_\ast$ have exponential growth rate $1$ is a scaling condition to guarantee that one metric is not strictly smaller than the other, i.e. $\epsilon d_\ast(o,x) - C \le d(o,x) \le \epsilon'd_\ast(o,x) + C$ for  $x\in\G$ where $0<\epsilon < \epsilon' < 1$. If this were the case, the asymptotic expression in Theorem \ref{thm.cor} could not hold for $d,d_\ast$.
\end{remark}
This result is in the same spirit as a result of Schwartz and Sharp \cite{schwarz.sharp} on the correlation of pairs of hyperbolic metrics on a surface. The difference is that, in the above result we count over group elements with restrictions on the displacement functions $d,d_\ast$ where as Schwartz and Sharp count over primitive conjugacy classes with restrictions on the translation distances $\ell_d, \ell_{d_\ast}$. This difference can be seen in the asymptotic growth rates: in our result we have $T^{-1/2}$ in the asymptotic where as in \cite{schwarz.sharp} there is a $T^{-3/2}$ term. We suspect that Schwartz and Sharp's result generalises to our setting however we are unsure of how to prove this. In the setting considered in \cite{schwarz.sharp} the authors have access to a symbolic coding in which conjugacy classes in the group are in bijection with periodic orbits in the corresponding subshift of finite type. One of the key steps in this current work is to prove a similar, but weaker result, Proposition \ref{prop.loops} that allows us to see conjugacy classes as loops in a Cannon coding. We are unsure to what extent this proposition can be improved. However we note that torsion conjugacy classes can not be represented by loops and so we will never be able to form a bijection as described for surface groups.

\begin{remark}\label{rem.hp}
(1) Suppose that $V$ is a closed surface and $\mathfrak{g}_1$ and $\mathfrak{g}_2$ are two non-isometric hyperbolic metrics on $V$. Then, if $d_1,d_2 \in \Dc_\G$ are the metrics corresponding to the lifts of $\mathfrak{g}_1, \mathfrak{g}_2$ to $\G = \pi_1(V)$ then $d_1, d_2$ satisfy the conditions of Theorem \ref{thm.cor} by \cite{schwarz.sharp} (or Proposition \ref{prop.ls2} below). The same result is true for negatively curved metrics on $V$ by a result of Dal'bo \cite{Dalbo}.

\noindent (2) Theorem \ref{thm.cor} can be used to compare hyperbolic structures on different closed manifolds that share the same fundamental group. For example we could compare two metrics on a surface group $\G$, one coming from considering $\G$ as a fuchsian group acting on the hyperbolic plane and the other coming from a quasi-fuchsian embedding of $\G$ into the isometry group of hyperbolic $3$-space. This is an advantage of using the general coarse geometric set-up opposed to relying on an object which is intrinsic to the geometric structures under consideration, i.e. the geodesic flow on a manifold.
\end{remark}

In general, checking if two metrics are independent is difficult.  We will prove the following result that allows us to verify the independence property in certain cases. We say that two metrics $d,d_\ast \in \Dc_\G$ are \textit{roughly similar} if and only if there exists $\tau > 0$ such that $|d(o,x) - \tau d_\ast(o,x)|$ is uniformly bounded for $x \in \G$.
\begin{proposition}\label{prop.ls2}
Suppose that $\G$ has connected boundary. Then, two strongly hyperbolic metrics $d,d_\ast \in \Dc_\G$ are independent if and only if they are not roughly similar.
\end{proposition}

We prove this result after the proof of Theorem \ref{thm.cor} in the final section of the article. Note that if two metrics are roughly similar then they are dependent (i.e. not independent). Example of one-ended hyperbolic groups include surface groups and more generally the fundamental groups of compact hyperbolic manifolds. For example Theorem \ref{thm.cor} applies to a pair of Green metrics on a surface group as long as they are not roughly similar.

\begin{remark} \label{rem.na}
See Theorem 1.1 and 1.2 of \cite{cantrell.tanaka.1} for a list of conditions that are equivalent to rough similarity. For example $d,d_\ast \in \Dc_\G$ are roughly similar if and only if they have proportional marked length spectra. In particular if $\G$ has connected boundary and $d,d_\ast \in \Dc_\G$ are both strongly hyperbolic with exponential growth rate $1$, then if there is $[x] \in \conj'$ such that $\ell_{d_\ast}[x] \neq \ell_d[x]$, then $d,d_\ast$ satisfy the hypotheses of Theorem  \ref{thm.cor}.
\end{remark}

\subsection{Counting with error terms and Anosov representations}
We can also apply our results to (convex) cocompact actions on $\text{CAT}(-1)$ metric spaces and Anosov representations to obtain refinements of existing orbital counting results.
\begin{theorem}
Suppose that a group $\G$ acts properly discontinuously, (convex) cocompactly and freely by isometries on a $\textnormal{CAT}(-1)$ metric space $(X,d)$. Suppose $o \in X$ is a fixed base point. Suppose that there exist two conjugacy classes in $\conj'$ such that the quotient of their $d_X$ translation lengths is badly approximable. Then there exist $\delta, C, \k >0$ such that
\[
\#\{ x \in \G : d_X(x \cdot o, o) < T \} = Ce^{\delta T}(1 + O(T^{-\k}))
\]
as $T\to\infty$.
\end{theorem}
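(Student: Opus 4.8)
The plan is to deduce this from Theorem~\ref{thm.1} by transporting the CAT($-1$) metric to the group. Let $e$ denote the identity element of $\G$ and, using the orbit map, define
\[
d(x,y) = d_X(x\cdot o, y\cdot o) \qquad \text{for } x,y \in \G.
\]
Since the action is free, no non-trivial element of $\G$ fixes $o$, so $d$ is a genuine (non-degenerate) metric on $\G$, and it is $\G$-invariant because $\G$ acts by isometries. Because the action is properly discontinuous and (convex) cocompact, the orbit map $x \mapsto x\cdot o$ is a quasi-isometric embedding of $\G$, equipped with any word metric, into $(X,d_X)$; equivalently, $(\G,d)$ is quasi-isometric to a word metric on $\G$, and it is hyperbolic because $(X,d_X)$ is. Hence $d \in \Dc_\G$. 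Note also that the badly approximable hypothesis forces $\G$ to be non-elementary: a virtually cyclic group acting freely on a CAT($-1$) space must be isomorphic to $\Z$ or trivial (a non-trivial finite subgroup would have a fixed point by Cartan's fixed point theorem), and in the cyclic case all ratios of translation lengths are rational, so Theorem~\ref{thm.1} is applicable.

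The key point is that $d$ is strongly hyperbolic. By \cite{NicaSpakula} the CAT($-1$) space $(X,d_X)$ is strongly hyperbolic, and the inequalities defining strong hyperbolicity are assertions about Gromov products of four-tuples of points, each of which is determined by the pairwise distances among those four points. Consequently the restriction of $d_X$ to any subset of $X$ is again strongly hyperbolic with the same parameter; applying this to the orbit $\G\cdot o$, which is identified bijectively with $\G$ via the orbit map, shows that $d$ is a strongly hyperbolic metric lying in $\Dc_\G$.

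Finally we match the hypotheses and the conclusion. For $x \in \G$ one has $\ell_d[x] = \lim_{n\to\infty} d_X(o, x^n\cdot o)/n$, which is precisely the stable $d_X$-translation length of $x$; so the assumption on the two conjugacy classes in $\conj'$ is exactly the badly approximable hypothesis of the second part of Theorem~\ref{thm.1} for the metric $d$, and by the remark following that theorem the length spectrum of $d$ is automatically non-arithmetic. Theorem~\ref{thm.1} then yields constants $\delta, C, \k > 0$ with
\[
\#\{x \in \G : d(e,x) < T\} = C e^{\delta T}(1 + O(T^{-\k})) \qquad \text{as } T\to\infty,
\]
and since $d(e,x) = d_X(o, x\cdot o) = d_X(x\cdot o, o)$ this is the claimed asymptotic. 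I do not expect a genuine obstacle in this argument: the analytic content is entirely supplied by Theorem~\ref{thm.1}, and the only points requiring care are the verifications that the transported metric $d$ belongs to $\Dc_\G$ (the standard Milnor--\v{S}varc argument for (convex) cocompact actions) and that it is strongly hyperbolic (the observation that strong hyperbolicity passes to subspaces).
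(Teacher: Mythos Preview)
Your proposal is correct and is exactly the approach the paper takes: the remark following the theorem states that it is a direct corollary of Theorem~\ref{thm.1} because the orbit metric lifts to a strongly hyperbolic metric in $\Dc_\G$, and you have simply filled in the details of this reduction (that the orbit metric lies in $\Dc_\G$ via Milnor--\v{S}varc, that strong hyperbolicity is a four-point condition and hence passes to the orbit, and that the translation lengths match). Your extra observation that the badly approximable hypothesis forces $\G$ to be non-elementary is a nice point that the paper leaves implicit.
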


\begin{remark}
This result is a direct corollary of Theorem \ref{thm.1}: the metric $d_X$ lifts to a strongly hyperbolic metric in $\Dc_\G$. This result extends Rohblin's result mentioned in the introduction.
\end{remark}

Similarly we obtain refinements of orbital counting results for Anosov representations.  Let  $\rho: \G \to \SL_d(\R)$ be a representation of a finitely generated group. We say that $\rho$ is $1$-dominated if, when $\G$ is equipped with a finite generating set, the (quotient of the) first and second singular values of $\rho(x)$ separate exponentially quickly as the word length of $x$ increases. See \cite{BochiPotrieSambarino} for an introduction to  dominated representations. If a group $\G$ admits a dominated representation then it is necessarily hyperbolic \cite{BochiPotrieSambarino}. Also, being $1$-dominated is equivalent to being projective Anosov (as first introduced by Labourie for surface groups and extended to all hyperbolic groups by Guichard and Wienhard). It follows from \cite[Section 3.3]{cantrell.eduardo} and the work in \cite{cantrell.tanaka.2} that the Hilbert length functional 
\[
d_H(x,y) = \log\|\rho(x^{-1}y)\| + \log\|\rho(xy^{-1})\|  \ \ \text{ for } \ \ x,y \in \G
\]
associated to a projective Anosov representation $\rho: \G \to \SL_d(\R)$ is a strongly hyperbolic metric belonging to $\Dc_\G$. Hence we obtain the following.

\begin{theorem}
Suppose that  $\rho: \G \to \SL_d(\R)$ is a projetive Anosov representation and write $d_H$ for the corresponding Hilbert length functional. Write $o\in \G$ for the identity element. Suppose that there exist two conjugacy classes in $\conj'$ such that the quotient of their $d_H$ translation lengths is badly approximable. Then there exist $\delta, C, \k >0$ such that
\[
\#\{ x \in \G : d_H(o,x) < T \} = Ce^{\delta T}(1 + O(T^{-\k}))
\]
as $T\to\infty$.
\end{theorem}

\begin{remark}
This result is the first orbital counting result with an  error terms for the Hilbert length functional for general Anosov representations. This provides error terms in the counting results of \cite{sambarino} and \cite{Carvajales} for the Hilbert length functional.
The method presented in this work can likely be applied to obtain the same result for  the logarithm of the norm function $x \mapsto \log\|\rho(x)\|$ associated to a projective Anosov representation (as well as other linear functionals on the Cartan algebra). We do not pursue this here as to apply our methods we would need to verify that various results from \cite{cantrell.tanaka.2} that are stated for symmetric distances $d\in\Dc_\G$ also hold for the asymmetric distance $d(x,y) = \log\|\rho(x^{-1}y)\|$.
\end{remark}


\section{Preliminaries} \label{sec.prelim}

\subsection{Hyperbolic groups and metrics}

We assume that the reader is familiar with hyperbolic groups and metrics and so will only briefly introduce them here. See Section 2 of \cite{cantrell.tanaka.1} for a more comprehensive account.

For a metric space $(X, d)$ the Gromov product is defined by
\[
 (x|y)_w =\frac{1}{2}( d(w, x)+d(w, y)-d(x, y)) \ \ \text{for $x, y, w \in X$}.
\]
We say that $(X, d)$ is $\d$-{\it hyperbolic} for some $\delta \ge 0$ if
\begin{equation}
(x|y)_w \ge \min\left\{(x|z)_w, (y|z)_w\right\}-\d \quad \text{for all $x, y, z, w \in X$}.
\end{equation}
A metric space is called {\it hyperbolic} if it is $\d$-hyperbolic for some $\d \ge 0$.

A hyperbolic group is a finitely generated group $\G$ such that, when equipped with a word metric $|\cdot|_S$ associated to a finite generating set $S$,  becomes a hyperbolic metric space. Throughout this work all hyperbolic groups will be assumed to be non-elementary, i.e. they do not contain a finite index cyclic subgroup.  We will use the notation $\Cay(\G,S)$ to denote the Cayley graph of $\G$ with respect to $S$.

We say that two metrics $d,d_\ast$ on $\G$ are {\it quasi-isometric} if there exist constants $L > 0$ and $C \ge 0$ such that
\[
L^{-1} \, d(x, y)-C \le d_\ast(x, y) \le L \, d(x, y)+C \ \text{ for all $x, y \in \G$}.
\]
Throughout this work,  $\Dc_\G$ will denote the set of metrics which are left-invariant, hyperbolic and quasi-isometric to some (equivalently, any) word metric in $\G$. For $d\in \Dc_\G$ we use the notation $v_d$ for the exponential growth rate of $d$, i.e.
\[
v_d = \lim_{T\to\infty} \frac{1}{T} \log \#\{ x\in\G: d(o,x) < T\} \ \text{(which is necessarily strictly positive)},
\]
and write $\ell_d$ for the translation distance function for $d$, i.e. $\ell_d[x] = \lim_{n\to\infty} d(o,x^n)/n$ where $[x]$ is the conjugacy class containing $x$. 
If two metrics $d,d_\ast \in \Dc_\G$ have the property that there exists $\tau, C >0$ such that $|\tau d(o,x) - d_\ast(o,x)| < C$ for all $x \in \G$ then we say that $d,d_\ast$ are \textit{roughly similar} (here $o\in\G$ is the identity). Two metrics $d,d_\ast \in \Dc_\G$ are said to be \textit{independent} if the only constants $a,b \in \R$ for which the function $a \ell_d[x] + b\ell_{d_\ast}[x]$ takes integer values for $x\in\G$ are $a = b = 0$.
Fix a metric $d$ in $\Dc_\G$.
Given an interval $I \subset \R$ and constants $L, C >0$ we say that a map $\g: I \to \G$ is an \textit{$(L, C)$-quasi-geodesic} if 
\[
L^{-1} \, |s-t|-C \le d(\g(s), \g(t)) \le L\, |s-t|+C \  \text{ for all $s, t \in I$},
\]
and a \textit{$C$-rough geodesic} if
\[
|s-t|-C \le d(\gamma(s), \gamma(t))\le |s-t|+C \ \text{ for all $s, t \in I$}.
\]
A $0$-rough geodesic is referred to as a geodesic.
A metric space $(\G, d)$ is called $C$-{\it roughly geodesic} if for each pair of elements $x, y \in \G$ we can find a $C$-rough geodesic joining $x$ to $y$. We say that $(\G,d)$ is \textit{roughly geodesic} if it is $C$-roughly geodesic for some $C \ge 0$.
A geodesic metric space is a $0$-roughly geodesic metric space. Every metric in $\Dc_\G$ is roughly geodesic \cite{BonkSchramm}.
Moreover the \textit{Morse lemma} holds for such metrics: if  $d \in\Dc_\G$ is $C_0$-roughly geodesic
then for every $(L, C)$-quasi-geodesic $\g$ in $(\G, d)$ there is a $C_0$-rough geodesic $\g_0$ such that $\g$ and $\g_0$ are within Hausdorff distance $C$ depending on $L, C, C_0, \d$, where $\d$ is a hyperbolicity constant for $d$.

Our results are concerning the following class of metrics.

\begin{definition}
A hyperbolic metric $d$ in $\G$ is called {\it strongly hyperbolic} if there exist positive constants $c, R_0 > 0$ such that for all $R \ge R_0$, and all $x, x', y, y' \in \G$,
if
$ d(x, y)-d(x, x')+d(x', y')-d(y, y') \ge R,$
then
\[
|d(x, y)-d(x', y)-d(x, y')+d(x', y')| \le e^{-c R}.
\]
\end{definition}

\begin{remark} 
Mineyev \cite{MineyevFlow} has shown that every hyperbolic group admits a strongly hyperbolic metric in $\Dc_\G$ (referred to as the \textit{Mineyev hat metric}). Other examples include: 

\noindent (1) The Green metric associated to random walk on $\G$ as discussed in the introduction.

\noindent (2) The orbit metric coming from a properly discontinuous, cocompact, free and isometric action on a $\text{CAT}(-1)$ metric space.

\noindent (3) Linear functions on the Cartan Algebra associated to Anosov representations.
\end{remark}

Hyperbolic groups can be compactified using their ideal boundary $\partial \G$ which consists of equivalence classes
of divergent sequences. A sequence of group elements $\{x_n\}_{n=0}^\infty$ diverges
if $(x_n|x_m)_o$ diverges as $\min\{n, m\}$ tends to infinity (here $o$ is the identity in $\G$). Two divergent sequences $\{x_n\}_{n=0}^\infty$ and $\{y_n\}_{n=0}^\infty$ are {\it equivalent} if 
$(x_n|y_m)_o$ diverges as $\min\{n, m\}$ tends to infinity. 
If $d \in \Dc_\G$ is $C$-roughly geodesic then for each $\x$ in $\partial \G$ there exists a $C$-rough geodesic $\g:[0, \infty) \to \G$ 
such that $\g(0)=o$ and $\g(n) \to \x$ as $n \to \infty$.
Furthermore for each pair of boundary elements $\x, \y$ in $\partial \G$ with $\x \neq \y$ there is a $C$-rough geodesic connecting $\x$ to $\y$, see \cite[Proposition 5.2]{BonkSchramm}.

The Gromov product extends to points in $\G \cup \partial \G$ for any $d\in\Dc_\G$. Fix $d\in\Dc_\G$ and
let
\[
(\xi|\eta)_o = \sup\left\{\liminf_{n \to \infty}(x_n|y_n)_o \ : \ \x=[\{x_n\}_{n=0}^\infty], \  \y=[\{y_n\}_{n=0}^\infty]\right\},
\]
for $\x, \y \in \G \cup \partial \G$,
where if $\x$ or $\y$ is in $\G$, then we take $\{x_n\}_{n=0}^\infty$ as the constant sequence $x_n=\x$ for all $n\ge 0$.
The Busemann function $ \beta_w(x, \x)$ associated to $d$ based at $w$ is given by
\[
\beta_w(x,\x) = \sup\left\{ \limsup_{n\to\infty} d(x,\x_n) - d(w,\xi_n) : \{ \x_n\}_{n=0}^\infty \right\}.
\]
When $d$ is strongly hyperbolic $\beta_w$ is obtained as a genuine limit 
\[
\b_w(x, \x)=\lim_{n \to \infty}\( d(x, x_n)- d(w, x_n)\)= d(w, x)-2( x | \x )_w \quad \text{for $(x, \x) \in \G \times \partial \G$},
\]
where $\x=[\{x_n\}_{n=0}^\infty]$,
and is continuous with respect to $\x$ in $\partial \G$.
Moreover in this case we have the cocycle identity
\[
\b_w(xy, \x)= \b_w(y, x^{-1}\x)+ \b_w(x w, \x) \quad \text{for $w, x, y \in \G$ and $\x \in \G\cup \partial \G$}.
\]
For general metrics $d\in\Dc_\G$ this identity holds up to a uniformly bounded additive error and in this case $\b_w$ is refered to as a \textit{quasi-cocycle}.

The quasi-metric in $\partial \G$ associated to  a metric $d \in \Dc_\G$ is defined by
\[
q(\x, \y) =\exp\(-(\x|\y)_o\) \ \  \text{for $\x, \y \in \partial \G$},
\]
where $q(\x, \x)=0$.
Recall that a quasi-metric satisfies $q(\x, \y)=0$ if and only if $\x=\y$, $q(\x, \y)=q(\y, \x)$, and  $q(\x, \y) \le e^{2\d}\max\left\{q(\x, \z), q(\z, \y)\right\}$ for $\x, \y, \z \in \partial \G$.
We endow $\partial \G$ with a topology by choosing $\{\y \in \partial \G \ : \ q(\x, \y)<\e\}$ for $\x \in \partial \G$ and $\e>0$ as an open basis.

\subsection{Patterson-Sullivan construction}

For $d\in\Dc_\G$ we define the {\it shadow sets}
\[
\Oc(x, R) =\{\x \in \partial \G \ : \ (\xi |x)_o \ge d(o, x)-R\} \ \  \text{for $x \in \G$ and $R\ge 0$}.
\]
Given a quasi-metric $q(\xi, \eta) = \exp(-(\xi|\eta)_o)$ on $\partial \G$ associated with a metric $d$ in $\Dc_\G$
we define
\[
B(\xi, r) =\{\y \in \partial \G \ : \ q(\x, \y) \le r\} \ \ \text{for $\x \in \partial \G$ and $r \ge 0$}.
\]
For a pair of metrics $d,d_\ast \in \Dc_\G$ and a real number $a \in \R$, we can find unique $b \in \R$ such that $ad + bd_\ast$ has exponent $1$, that is
\[
\sum_{x \in \G} e^{-s(ad(o,x) + bd_\ast(o,x))}
\]
has abscissa of convergence $1$ as $s$ varies (and $a, b$ are fixed). In fact $b$ can be obtained from the \textit{Manhattan curve} for the pair $(d,d_\ast)$, see Section \ref{section.codingstructure}.
Corollary 2.10 of \cite{cantrell.tanaka.1} shows the following.

\begin{proposition}[Corollary 2.10 \cite{cantrell.tanaka.1}] \label{prop.qcm} 
Let $d,d_\ast \in \Dc_\G$ be metrics and $a,b \in \R$ so that $ad + bd_\ast$ has exponent $1$. Then, there exists a Radon measure $\mu_{a,b}$ which is ergodic with respect to the action of $\G$ on $\partial \G$. There exists a constant $C_{a,b} > 1$ such that for any $x\in\G$
\[
C_{a,b}^{-1}e^{-a\beta_{o}(x,\xi) - b\b_{\ast,o}(x,\xi)} \le  \frac{dx_\ast\mu_{a,b}}{d\mu_{a,b}} \le C_{a,b}e^{-a\beta_{o}(x,\xi) - b\b_{\ast,o}(x,\xi)}
\]
where $\beta_{\ast o}$, $\b_o$ are the Busemann functions for $d_\ast, d$ respectively.
\end{proposition}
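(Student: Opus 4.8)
The statement is \cite[Corollary 2.10]{cantrell.tanaka.1}; I sketch the proof strategy, which is a Patterson--Sullivan construction adapted to the roughly geodesic setting. Write $\phi(x) = a\,d(o,x) + b\,d_\ast(o,x)$ and $\beta^{a,b}_o(x,\xi) = a\,\beta_o(x,\xi) + b\,\beta_{\ast,o}(x,\xi)$, so that $\beta^{a,b}_o$ is a quasi-cocycle (a uniformly bounded perturbation of an additive cocycle, since $d,d_\ast$ are strongly hyperbolic). By hypothesis the Poincar\'e series $P(s) = \sum_{x \in \G} e^{-s\phi(x)}$ has abscissa of convergence $1$, and the defining property of $b$ (it lies on the Manhattan curve of the pair $(d,d_\ast)$) ensures that $\phi$ is coarsely nonnegative and proper on $\G$. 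Thus one can treat $\phi$ as the displacement function of a metric and $\beta^{a,b}_o$ as its Busemann cocycle, running the classical machinery with fixed bounded errors allowed throughout.

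First I would construct $\mu_{a,b}$. If $P(1) = \infty$ take $h \equiv 1$; otherwise Patterson's trick produces a slowly growing $h\colon [0,\infty) \to (0,\infty)$ for which $\widetilde P(s) = \sum_{x} h(\phi(x))\, e^{-s\phi(x)}$ still has abscissa $1$ but diverges at $s = 1$. For $s > 1$ set $\nu_s = \widetilde P(s)^{-1} \sum_{x\in\G} h(\phi(x))\, e^{-s\phi(x)}\, \delta_x$, a probability measure on $\G \cup \partial\G$, and let $\mu_{a,b}$ be a weak-$\ast$ limit point as $s \downarrow 1$. Divergence of $\widetilde P$ at $1$ forces all mass to escape to $\partial\G$, so $\mu_{a,b}$ is a finite (hence Radon) measure on $\partial\G$. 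The quasi-conformality is then obtained by comparing $x_\ast \nu_s$ with $\nu_s$: since $d(x,z) - d(o,z) \to \beta_o(x,\xi)$ and $d_\ast(x,z) - d_\ast(o,z) \to \beta_{\ast,o}(x,\xi)$ as $z \to \xi$, a localisation over shadows $\Oc(z,R)$ (here the Morse lemma and strong hyperbolicity guarantee the relevant orbit points stay within bounded distance of rough geodesics) gives a uniform $C_{a,b} > 1$ with
\[
C_{a,b}^{-1}\, e^{-\beta^{a,b}_o(x,\xi)} \le \frac{dx_\ast\mu_{a,b}}{d\mu_{a,b}}(\xi) \le C_{a,b}\, e^{-\beta^{a,b}_o(x,\xi)},
\]
and passing to the limit $s \downarrow 1$ only affects the constant. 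The comparison is necessarily multiplicative rather than exact precisely because $\beta^{a,b}_o$ is only a quasi-cocycle. The same estimate applied to shadows yields a Sullivan-type shadow lemma $\mu_{a,b}(\Oc(x,R)) \asymp e^{-\phi(x)}$ for $R$ large.

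It remains to prove ergodicity of the $\G$-action on $(\partial\G, \mu_{a,b})$. Since $\widetilde P$ diverges at its abscissa of convergence, $\mu_{a,b}$ is of divergence type; the shadow lemma together with a Borel--Cantelli argument shows the action is conservative, and a Hopf-ratio argument on shrinking shadows then upgrades conservativity to ergodicity, following Sullivan's and Coornaert's treatment of conformal densities, carried out with the uniform multiplicative errors permitted by the quasi-cocycle relation. The main obstacle is exactly this adaptation: metrics in $\Dc_\G$ are only roughly geodesic and their Busemann functions are only quasi-cocycles, so every step of the classical argument that relies on exact conformality (the shadow lemma, and especially the Hopf argument for ergodicity) must be rerun allowing a fixed bounded error, and one must separately verify, via the Manhattan curve, that $\phi = a\,d + b\,d_\ast$ remains a genuinely proper, coarsely nonnegative gauge even when $a$ or $b$ is negative.
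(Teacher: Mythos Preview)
The paper does not supply its own proof of this proposition; it is simply quoted as Corollary~2.10 of \cite{cantrell.tanaka.1}. Your sketch---Patterson's construction applied to the gauge $\phi = a\,d + b\,d_\ast$, extraction of a weak-$\ast$ limit on $\partial\G$, a shadow lemma, and a Hopf-type ergodicity argument with multiplicative errors---is the standard route and is essentially how the cited result is obtained. One small slip: the proposition does not assume $d,d_\ast$ are strongly hyperbolic, only that they lie in $\Dc_\G$; the Busemann functions are quasi-cocycles for \emph{that} reason, whereas in the strongly hyperbolic case they would be genuine cocycles and the Radon--Nikodym inequality would become an equality.
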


The measures in this proposition also satisfy the following: there exists $C>1$ depending on $a,b$ and $R$ such that
\begin{equation}\label{eq.shadow}
C^{-1} e^{-ad(o,x) - bd_\ast(o,x)} \le \mu_{a,b}(\Oc(x,R)) \le Ce^{-ad(o,x) - bd_\ast(o,x)}  \ \text{ for each $x \in \G$.}
\end{equation}

We end this subsection with the proof of Theorem \ref{thm.ls} from the introduction.

\begin{proof} [Proof of Threorem \ref{thm.ls}]
Let $\beta_o: \G \times \partial \G \to \R$ be the Busemann cocyle associated to a strongly hyperbolic metric $d \in \Dc_\G$. In this case $\beta_o$ is a H\"older cocycle (i.e. it is not a quasi-cocycle). Given a hyperbolic element $x\in\G$ we will use 
$x^+$ (respectively $x^-$) for the attracting (respectively repelling) fixed point in $\partial \G$. That is, $x^+$ and $x^-$ are given by $[\{ x^n\}_{n=0}^\infty]$ and $[\{ x^{-n}\}_{n=0}^\infty]$. It is easy to check from the cocycle property and hyperbolicity that if $x \in\G$ is hyperbolic (i.e. $\{ x^n: n \in \mathbb{Z}\}$ is infinite) then $\beta_o(x,x^+) = \ell_d[x]$ and $\beta_o(x,x^-) = -\ell_d[x]$. Now suppose that the length spectrum of $d$ is arithmetic. Then by Proposition 3.2 of \cite{GMM2018} there exists a hyperbolic element $x \in \G$ with $\beta_o(x,x^+) = \beta_o(x,x^-)$ where $x^-, x^+ \in \partial \G$ are the repelling and attracting fixed points of $x$. However $\beta_o(x,x^+) = \ell_d[x] > 0$ and $\beta_o(x,x^-) = - \ell_d[x] < 0$ resulting in a contradiction.
\end{proof}

\subsection{Invariant measures on the boundary square}\label{Sec:invariant}

The boundary square $\partial^2 \G = (\partial \G)^2\setminus \{\text{\rm diagonal}\}$ is endowed with the restriction of the product topology and the diagonal action of $\G$ on $\partial^2 \G$ is continuous.

Take two metrics $d,d_\ast \in \Dc_\G$. Given $a\in\R$ let $b\in\R$  be such that $ad + b d_\ast$ has exponent $1$. Let $\mu_{a,b}$ be the measure from Proposition \ref{prop.qcm} then we have the following.

\begin{proposition}[Proposition 2.8 and Example 2.9 \cite{cantrell.tanaka.2}] \label{prop.prodm}
For each $a \in \R$ let $b \in \R$ be so that $ad + bd_\ast$ has exponent $1$. Then there is a $\G$ invariant Radon measure $\Lambda_{a,b}$ on $\partial^2\G$ equivalent to
\[
\exp(2(\x,\eta)_{\ast o} + 2(\x,\eta)_o) \ \mu_{a,b} \otimes \mu_{a,b}
\]
and $\Lambda_{a,b}$ is ergodic with respect to the action of $\G$ on $\partial^2 \G$.
\end{proposition}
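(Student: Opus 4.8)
The plan is to exhibit $\Lambda_{a,b}$ explicitly as a weighted product measure, check the Radon property, establish $\G$-invariance by a direct Radon--Nikodym computation, and finally deduce ergodicity from the ergodicity of $\mu_{a,b}$. Write $q$ for the quasi-metric on $\partial\G$ attached to $d$, so that $(\xi|\eta)_o=-\log q(\xi,\eta)\ge 0$, and similarly $(\xi|\eta)_{\ast,o}\ge 0$ for $d_\ast$. I would set
\[
d\Lambda_{a,b}(\xi,\eta)\;=\;\exp\!\big(2a(\xi|\eta)_o+2b(\xi|\eta)_{\ast,o}\big)\,d\mu_{a,b}(\xi)\,d\mu_{a,b}(\eta)
\]
on $\partial^2\G$, the weight being the one in the statement written out with the coefficients $a,b$ of Proposition \ref{prop.qcm}. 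Compact subsets of $\partial^2\G$ are exactly the sets $\{(\xi,\eta): q(\xi,\eta)\ge\epsilon\}$ with $\epsilon>0$, and on such a set $(\xi|\eta)_o$ and $(\xi|\eta)_{\ast,o}$ are bounded; since $\mu_{a,b}$ is finite ($\partial\G$ is compact), $\Lambda_{a,b}$ is then a Radon measure, and it lies in the same measure class as $\mu_{a,b}\otimes\mu_{a,b}$.

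For $\G$-invariance I would compute, for $\gamma\in\G$,
\[
\frac{d\gamma_\ast\Lambda_{a,b}}{d\Lambda_{a,b}}(\xi,\eta)=\frac{\exp\big(2a(\gamma^{-1}\xi|\gamma^{-1}\eta)_o+2b(\gamma^{-1}\xi|\gamma^{-1}\eta)_{\ast,o}\big)}{\exp\big(2a(\xi|\eta)_o+2b(\xi|\eta)_{\ast,o}\big)}\cdot\frac{d\gamma_\ast\mu_{a,b}}{d\mu_{a,b}}(\xi)\cdot\frac{d\gamma_\ast\mu_{a,b}}{d\mu_{a,b}}(\eta).
\]
By left-invariance of $d$ one has $(\gamma^{-1}\xi|\gamma^{-1}\eta)_o=(\xi|\eta)_{\gamma o}$, and $(\xi|\eta)_{\gamma o}-(\xi|\eta)_o=\tfrac{1}{2}\big(\beta_o(\gamma,\xi)+\beta_o(\gamma,\eta)\big)$ — an equality when $d$ is strongly hyperbolic, since then $\beta_o$ is a genuine cocycle realised as a limit, and valid up to a bounded error in general; likewise for $d_\ast$. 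Thus the quotient of weights equals $\exp\big(a\beta_o(\gamma,\xi)+b\beta_{\ast,o}(\gamma,\xi)\big)\exp\big(a\beta_o(\gamma,\eta)+b\beta_{\ast,o}(\gamma,\eta)\big)$, which by the quasi-conformality estimate of Proposition \ref{prop.qcm} is, up to the constant $C_{a,b}^{2}$, exactly the reciprocal of the product of the two Radon--Nikodym factors. Hence $\Lambda_{a,b}$ is $\G$-quasi-invariant with Radon--Nikodym cocycle bounded between $C_{a,b}^{-2}$ and $C_{a,b}^{2}$ uniformly in $\gamma$; in particular its measure class is $\G$-invariant.

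The delicate step, and the one I expect to be the main obstacle, is passing from this to an honestly $\G$-invariant representative of the measure class. When $d$ and $d_\ast$ are strongly hyperbolic — the situation relevant to this paper — $a\beta_o+b\beta_{\ast,o}$ is a genuine H\"older cocycle, so the Patterson--Sullivan construction attached to this cocycle yields a measure $\nu_{a,b}$ that is \emph{exactly} $(a\beta_o+b\beta_{\ast,o})$-conformal; $\nu_{a,b}$ is equivalent to $\mu_{a,b}$ by uniqueness of conformal densities in the ergodic class, and rerunning the computation above with $\nu_{a,b}$ in place of $\mu_{a,b}$ makes every error term disappear, so $\exp(2a(\xi|\eta)_o+2b(\xi|\eta)_{\ast,o})\,\nu_{a,b}\otimes\nu_{a,b}$ is literally $\G$-invariant. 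For a general pair $d,d_\ast\in\Dc_\G$, where $\beta_o$ is only a quasi-cocycle, one instead appeals to the construction of \cite{cantrell.tanaka.2}: the Mineyev topological flow attached to $d$ carries a genuine cocycle, and the transverse measure of the corresponding flow-invariant measure furnishes $\Lambda_{a,b}$ with genuine $\G$-invariance built in.

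It remains to prove ergodicity of the diagonal $\G$-action on $(\partial^2\G,\Lambda_{a,b})$, which I would deduce from the ergodicity of $(\partial\G,\mu_{a,b})$ (Proposition \ref{prop.qcm}) by the standard doubling argument. Given a $\G$-invariant set $E\subseteq\partial^2\G$ with $\xi$-sections $E_\xi$, the sets $\{\xi:\mu_{a,b}(E_\xi)=0\}$ and $\{\xi:\mu_{a,b}(E_\xi)=1\}$ are $\G$-invariant (using that $\mu_{a,b}$ and $\gamma_\ast\mu_{a,b}$ are mutually absolutely continuous), hence each is null or co-null, and ergodicity of $\mu_{a,b}$ forces $\Lambda_{a,b}(E)\in\{0,\Lambda_{a,b}(\partial^2\G)\}$ unless $0<\mu_{a,b}(E_\xi)<1$ for $\mu_{a,b}$-a.e.\ $\xi$; this last possibility is ruled out by a Lebesgue-density argument along shrinking shadows, using the shadow estimates \eqref{eq.shadow} together with the north--south dynamics of hyperbolic elements of $\G$ on $\partial\G$, exactly as in the proofs of ergodicity of boundary actions in \cite{cantrell.tanaka.2} and the references therein. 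This gives the ergodicity of $\Lambda_{a,b}$ and completes the argument.
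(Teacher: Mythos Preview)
The paper does not give its own proof of this proposition: it is quoted from \cite{cantrell.tanaka.2} (Example~2.9) and used as a black box, so there is nothing in the paper to compare your attempt against. Your outline is the standard Bowen--Margulis--Sullivan construction and the quasi-invariance computation is correct; the weight you write, $\exp\big(2a(\xi|\eta)_o+2b(\xi|\eta)_{\ast,o}\big)$, is the one that makes the invariance calculation balance, and it lies in the same measure class as the weight printed in the statement, so there is no conflict with the ``equivalent to'' assertion. Your passage from quasi-invariance to genuine invariance is reasonable in the strongly hyperbolic case and otherwise simply reinvokes the cited source.

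The real gap is the ergodicity step. Your ``doubling argument'' correctly reduces matters to ruling out $0<\mu_{a,b}(E_\xi)<1$ for $\mu_{a,b}$-a.e.\ $\xi$, but that is precisely where all the content lies: ergodicity of the $\G$-action on $(\partial\G,\mu_{a,b})$ does \emph{not} formally imply ergodicity of the diagonal action on $(\partial^2\G,\mu_{a,b}\otimes\mu_{a,b})$. Eliminating that intermediate case is the Hopf--Tsuji--Sullivan dichotomy (equivalently, double ergodicity of the boundary action, or ergodicity of the Bowen--Margulis measure for the geodesic/Mineyev flow), and it requires a substantive argument --- a Hopf-type argument combining the shadow lemma with divergence of the Poincar\'e series at the critical exponent, as in Roblin, or the flow-coding approach of \cite{cantrell.tanaka.2}. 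Your one-line appeal to ``Lebesgue density along shrinking shadows and north--south dynamics'' names some correct ingredients but is not a proof; this is exactly the step that the cited reference is there to supply.
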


We will be interested in various quantities related to the product measures in this proposition. The next definition
and lemma introduce these quantities.

\begin{definition}
If $\L$ is a Radon measure on $\partial^2 \G$,
then we define
\[
D_q(\x_-, \x_+: \L) = \liminf_{r \to 0}\frac{\log \L(B(\x_-, r)\times B(\x_+, r))}{\log r} \ \ \text{for $(\x_-, \x_+) \in \partial^2 \G$}, 
\]
where balls $B(\x, r)$ are defined in terms of a quasi-metric $q$ in $\partial \G$.
\end{definition}

Given $d,d_\ast \in \Dc_\G$ and $\x \in \partial \G$ we define
\[
\tau_{\text{inf}}(\xi) = \liminf_{t\to\infty} \frac{d_\ast(\g_\x(o), \g_\x(t))}{d(\g_\x(o), \g_\x(t))} \ \text{ and } \ \tau_{\text{sup}}(\xi) = \limsup_{t\to\infty} \frac{d_\ast(\g_\x(o), \g_\x(t))}{d(\g_\x(o), \g_\x(t))}
\]
where $\g_\xi : [0,\infty) \to (\G,d)$ is any quasi-geodesic with $\x = \lim_{t\to\infty} \g_\x(t)$. Note that these quantities are well-defined by the Morse Lemma. If $\tau_\text{inf}(\x) = \tau_\text{sup}(\x)$ we denote their common value by $\tau(\x)$.

\begin{lemma}[Lemma 3.2 \cite{cantrell.tanaka.2}]\label{lem.intersect} 
For each $a,b \in\R$ so that $ad + bd_\ast$ has exponent $1$ we have that $\tau_\textnormal{inf}(\x) = \tau_\textnormal{sup}(\x)$ for $\mu_{a,b}$ almost every $\x \in \G$. Furthermore for such $a,b$ there exists a constant $\tau_{a,b}$ with $\tau(\xi) = \tau_{a,b}$
for $\mu_{a,b}$ almost every $\xi \in \partial \G$. 
\end{lemma}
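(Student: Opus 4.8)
The plan is to leverage the ergodicity of the measure $\mu_{a,b}$ (Proposition \ref{prop.qcm}) together with an appropriate $\G$-equivariance property of the functions $\tau_{\mathrm{inf}}$ and $\tau_{\mathrm{sup}}$, and then upgrade almost-everywhere equality of the two to a constant value via a zero-one law. First I would observe that, because any two quasi-geodesics $\g_\x, \g'_\x$ with the same endpoint $\x$ are within bounded Hausdorff distance (Morse Lemma), and both $d$ and $d_\ast$ are quasi-isometric to a word metric, the ratio $d_\ast(\g_\x(o),\g_\x(t))/d(\g_\x(o),\g_\x(t))$ has the same $\liminf$ and $\limsup$ regardless of the choice of quasi-geodesic; this makes $\tau_{\mathrm{inf}}(\x)$ and $\tau_{\mathrm{sup}}(\x)$ well-defined Borel functions of $\x$. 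Moreover, replacing the basepoint $o$ by $g\cdot o$ for $g \in \G$ changes the numerator and denominator each only by a bounded additive constant, so for each fixed $g$ we have $\tau_{\mathrm{inf}}(g^{-1}\x) = \tau_{\mathrm{inf}}(\x)$ and likewise for $\tau_{\mathrm{sup}}$: both are $\G$-invariant Borel functions on $\partial\G$.

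Next, since $\mu_{a,b}$ is ergodic for the $\G$-action on $\partial\G$, each $\G$-invariant Borel function is $\mu_{a,b}$-almost everywhere constant; call the constants $\tau^-_{a,b}$ and $\tau^+_{a,b}$. So it remains to show $\tau^-_{a,b} = \tau^+_{a,b}$, i.e. that the genuine limit $\tau(\x) = \lim_{t\to\infty} d_\ast(\g_\x(o),\g_\x(t))/d(\g_\x(o),\g_\x(t))$ exists for $\mu_{a,b}$-a.e. $\x$. Here is where I would bring in the Patterson–Sullivan/shadow estimates: the condition that $ad + bd_\ast$ has exponent $1$ and the shadow bound \eqref{eq.shadow} say that $\mu_{a,b}(\Oc(x,R)) \asymp e^{-ad(o,x) - bd_\ast(o,x)}$. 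Tracking a quasi-geodesic $\g_\x$ toward $\x$, the points $x_t = \g_\x(t)$ generate a nested sequence of shadows containing $\x$ whose $\mu_{a,b}$-measures are comparable to $e^{-a\,d(o,x_t) - b\,d_\ast(o,x_t)}$; comparing the logarithm of this measure along the sequence with the scale $r \approx e^{-(\x|\cdot)_o}$ (which in turn is comparable to $e^{-t}$ along the geodesic) expresses $\log\mu_{a,b}(\text{shadow of } x_t)/\log r$ as $a + b\, d_\ast(o,x_t)/d(o,x_t) + o(1)$. So the existence of $\tau(\x)$ for a.e. $\x$ is equivalent to a local-dimension statement for $\mu_{a,b}$ at $\x$, and the almost-everywhere existence of such a pointwise dimension is exactly the content of a Shannon–McMillan–Breiman / Borel–Cantelli type argument for the quasi-conformal measure $\mu_{a,b}$ — precisely the kind of statement established in \cite{cantrell.tanaka.1}, \cite{cantrell.tanaka.2} for these measures. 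Concretely, I would invoke the (already available) fact that $\mu_{a,b}$ has a $\mu_{a,b}$-a.e. constant pointwise dimension with respect to the quasi-metric $q$, and then read off $\tau_{a,b}$ from the relation $D_q(\x:\mu_{a,b}) = a + b\,\tau_{a,b}$.

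The main obstacle I anticipate is the last step: making rigorous the passage from the shadow estimate \eqref{eq.shadow}, which controls $\mu_{a,b}$ of shadows of \emph{group elements}, to a genuine pointwise dimension of $\mu_{a,b}$ with respect to metric balls $B(\x,r)$ along \emph{every} scale $r \to 0$ (not just the discrete subsequence of scales realized by lattice points on a geodesic ray). This requires a quasi-geodesic-to-shadow comparison showing that metric balls $B(\x,r)$ are sandwiched between shadows of comparable radius, uniformly, plus a Borel–Cantelli argument to rule out the $\liminf < \limsup$ scenario along intermediate scales; the strong hyperbolicity of $d$ (giving honest Busemann cocycles and the precise shadow lemma) is what makes this work, and one must be careful that the error terms $C_{a,b}$ in Proposition \ref{prop.qcm} and $C$ in \eqref{eq.shadow}, being multiplicative constants, contribute only $O(1)$ to $\log\mu_{a,b}(\cdot)$ and hence vanish after dividing by $\log r \to -\infty$. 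Once this pointwise-dimension fact is in hand — and it is essentially \cite[Lemma 3.2]{cantrell.tanaka.2}, which we are citing — the ergodicity argument of the first two paragraphs closes the proof.
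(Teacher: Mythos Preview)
The paper does not prove this lemma at all: it is quoted verbatim as Lemma~3.2 of \cite{cantrell.tanaka.2} and used as a black box, so there is no ``paper's own proof'' to compare against. Your plan is therefore an attempt to supply what the paper deliberately outsources.

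On the merits: your step~(i) --- that $\tau_{\mathrm{inf}}$ and $\tau_{\mathrm{sup}}$ are $\G$-invariant Borel functions (via the Morse Lemma and bounded basepoint change), hence $\mu_{a,b}$-a.e.\ constant by the ergodicity in Proposition~\ref{prop.qcm} --- is correct and is indeed how one gets the constant $\tau_{a,b}$ once the limit is known to exist. The gap is entirely in step~(ii). Your reduction of ``$\tau_{\mathrm{inf}}(\xi)=\tau_{\mathrm{sup}}(\xi)$ a.e.'' to the a.e.\ existence of the pointwise dimension $D_q(\xi:\mu_{a,b})$ via the shadow estimate~\eqref{eq.shadow} is correct as an equivalence, but you then resolve it by invoking ``the (already available) fact that $\mu_{a,b}$ has a $\mu_{a,b}$-a.e.\ constant pointwise dimension'' and immediately concede that this fact ``is essentially \cite[Lemma~3.2]{cantrell.tanaka.2}, which we are citing''. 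That is circular: you have reformulated the lemma, not proved it. The genuine content --- passing from the multiplicative shadow bounds (which only give $\liminf$ and $\limsup$ control) to an honest limit --- requires an actual ergodic-theoretic input (e.g.\ a Birkhoff/Kingman-type theorem along rays, or a Hopf-type argument on $\partial^2\G$ using the invariant measure $\Lambda_{a,b}$), which your Borel--Cantelli/SMB gesture names but does not carry out. The obstacle you flag in your last paragraph is thus not a technicality but the entire substance of the lemma.
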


\subsection{Markov structures: Cannon's coding}

Fix a finite generating set $S$ for $\G$.

\begin{definition} \label{def.sms}
Let  $\Ac=(\Gc, w, S)$ be a collection where
\begin{enumerate}
\item $S$ is a finite generating set for $\G$;
\item $\Gc=(V, E, \ast)$ is a finite directed graph with a vertex $\ast$ called the \textit{initial state}; and,
\item $w: E \to S$ is a labelling such that
for a directed edge path $(x_0, x_1, \dots, x_{n})$ (where $(x_i, x_{i+1})$ corresponds to a directed edge)
there is an associated path in the Cayley graph $\Cay(\G, S)$ beginning at the identity: the path corresponds to
\[
(o, w(x_0,x_1), w(x_0,x_1)w(x_1,x_2), \dots, w(x_0,x_1)\cdots w(x_{n-1}, x_n)).
\]
\end{enumerate}
Let $\ev$ denote the map that sends a finite path to the endpoint of the corresponding path in $\Cay(\G,S)$, i.e.
$\ev(x_0,\ldots, x_n) = w(x_0,x_1)\cdots w(x_{n-1},x_n)$.
We say that $\Ac$  is a strongly Markov structure for $\G, S$ if 
\begin{enumerate}
\item for each vertex $v\in V$ there exists a directed path from $\ast$ to $v$;
\item for each directed path in $\Gc$ the associated path in $\Cay(\G, S)$ is a geodesic; and,
\item the map $\ev$ defines a bijection between the set of directed paths from $\ast$ in $\Gc$ and $\G$.
\end{enumerate}
\end{definition}

Cannon first showed that certain cocompact Kleinian groups and generating sets admit strongly Markov structures \cite{Cannon}. Ghys and de la Harpe extended Cannons result and showed that every hyperbolic group and finite generating set admits a strongly Markov automatic structure (cf. \cite{GhysdelaHarpe},  \cite[Section 3.2]{Calegari}).

For technical reasons which will become apparent later, we augment the above strongly Markov structure by introducing an additional vertex labelled $0$ to $V$. We also add directed edges from every vertex $x \in V \cup \{0\} \backslash \{\ast\}$ to $0$ and define $\lambda(x,0) = o$ (the identity in $\Gamma$) for every $x \in V \cup \{0\} \backslash \{\ast\}$. We will assume that every strongly Markov structure has been augmented in this way and will abuse notation by labelling the augmented structure, its edge and vertex set by $\mathcal{G}$, $V$ and $E$ respectively. This graph $\mathcal{G}$ allows us to introduce a subshift of finite type.

Let $A$ be the $k \times k$ (where $k$ is the number of vertices in $\Gc$), $0-1$ transition matrix describing $\Gc$. 
We use the notation $A_{i,j}$ to denote the $(i,j)$th entry of $A$. The one-sided subshift of finite type associated to $A$ is the space
\[
\Sigma_A = \{(x_n)_{n=0}^{\infty} : x_n \in \{1,2,...,k\}, A_{x_n,x_{n+1}}=1, n \in \mathbb{Z}_{\ge 0}\}.
\]
When $A$ is clear, we will drop $A$ from the notation in the definition of the shift spaces and will simply write $\SS$. Given $x\in\SS_A$ we write $x_n$ for the $n$th coordinate of $x$. 
The shift map $\sigma: \Sigma_A \rightarrow \Sigma_A$ sends $x$ to $y= \sigma(x)$ where $y_n=x_{n+1}$ for all $n \in \mathbb{Z}_{\ge 0}$. For each $0<\theta <1$ we can define a metric $d_\theta$ on $\Sigma_A$. Let $x,y \in \Sigma_A$.  If $x_0 = y_0$ then we define $d_\theta(x,y)= \theta^N,$ where $N$ is the largest positive integer such that $x_i = y_i$ for all $0 \le i<N$. If $x_0 \neq y_0$ we set $d_\theta(x,y)=1$. We let
\[
F_\theta(\Sigma_A) = \{r:\Sigma_A \rightarrow \mathbb{C}:\text{$r$ is Lipschitz with respect to $d_\theta$}\}
\]
which we equip with the norm $\|r\|_\theta = |r|_\theta + |r|_\infty$ where $|r|_\infty$ is the sup-norm and $|r|_\theta$ denotes the least Lipschitz constant for $r$. This space is a Banach space. Two functions $r_1,r_2 \in F_\theta(\SS_A)$ are said to be cohomologous (denoted by $r_1 \sim r_2$) if there exists a continuous function $h:\Sigma_A \rightarrow \mathbb{C}$ such that $r_1=r_2+h\circ \sigma - h.$ A theorem of Livsic asserts that if $r\in F_\theta$ then the set $\{S_nr(x) - Cn: x\in \Sigma_A, n \in \mathbb{Z}_{\ge 0}\}$ is bounded if and only if $r$ is cohomologous to the constant function with value $C$. Here $S_nr(x) = r(x) + r(\sigma(x))+...+r(\sigma^{n-1}(x))$ denotes the $n$th Birkhoff sum. A function $r: \Sigma \to \R$ (on a  subshift of finite type $\Sigma$) is arithmetic if there exists $a \in \R$ such that
\[
\{S_nr(x) : x \in \Sigma \text{ and } \sigma^n(x) = x\} \subseteq a\Z.
\]
If no such $a$ exists we say that $r$ is non-arithmetic.

The two-sided subshift conisists of all bi-infinite paths in $\Gc$:
\[
\overline{\SS}_A = \{(x_n)_{n=-\infty}^{\infty} : x_n \in \{1,2,...,k\}, A_{x_n,x_{n+1}}=1, n \in \mathbb{Z}\}.
\]
When $A$ is clear, we will write $\overline{\SS}$. 
 We will always use the overlined notation for two-sided shifts and the non-overlined notation for one-sided shifts.
On $\overline{\SS}_A$ the shift $\sigma: \overline{\Sigma}_A \rightarrow \overline{\Sigma}_A$ sends $x$ to $y= \sigma(x)$ where $y_n=x_{n+1}$ for all $n \in \mathbb{Z}$.  As for one sided shifts for each $0<\theta <1$ we can define a metric $d_\theta$ on $\overline{\SS}_A$. Let $x,y \in \overline{\SS}_A$.  If $x_0 = y_0$ then we define $d_\theta(x,y)= \theta^N,$ where $N$ is the largest positive integer such that $x_i = y_i$ for all $0 \le |i| <N$. If $x_0 \neq y_0$ we set $d_\theta(x,y)=1$. We can define $F_\theta(\overline{\SS}_A)$ analogously to the one-sided case.

It will be convenient to extend the map $\ev$ defined on finite paths in Definition \ref{def.sms} to infinite paths.
If $x = (x_k)_{k=0}^\infty \in \SS$ and $n \in \Z_{\ge 0}$ then we set
\[
\ev_n(x) = \ev(x_0, x_1, \ldots, x_n).
\]
We also define $\ev(x)$ to be the point in $\partial \G$ corresponding to the infinite geodesic determined by $x$.
This map can be used to pushforward measures on subshifts to measures on $\partial \G$. We use the notation $\ev_\ast$ for the
corresponding pushforward map.

\subsection{Thermodynamic formalism}
We say that a directed graph $\Gc$ is connected if there exists a path between any pair of vertices in $\Gc$.
A \textit{connected component} of a finite directed graph is a maximal, connected subgraph.
Given a Cannon coding $\Gc$ associated to a pair $\G,$ $S$, since the $\ast$ state only has outgoing edges, $\Gc$ will never be connected. We can however decompose $\Gc$ into connected components. 

Given such a connected component $\Cc$, there exists a maximal integer $p_\Cc \ge 1$ such that the length of every closed loop in $\Cc$ has length divisible by $p_\Cc$. We call 
$p_\Cc$ the period of $\Cc$.
When $p_\Cc =1$ we say that the component $\Cc$ is aperiodic and the corresponding subshift $\SS_\Cc$ defined over $\Cc$ is mixing. In general since $\Cc$ is connected $(\SS_\Cc, \sigma)$ is {\it topologically transitive}. That is, for any two non-empty open sets $U$ and $V$, there exists $n \in \Z$ such that $U\cap \s^n V \neq \emptyset$.
In the case that $p_\Cc >1$ we can decompose the vertex set $V(\Cc)$ for $\Cc$ into a disjoint collection of vertices $V_1, \ldots, V_{p_\Cc}$ such that $V(\Cc) = \bigsqcup_{j \in \Z/p_\Cc\Z} V_j$. Letting $\SS_j$ for each $j=1,\ldots, p_\Cc$ denote the set of elements in $\SS_\Cc$ that contain sequences starting with a vertex in $V_j$ we have that $\sigma(\SS_j) = \SS_{j+1}$ where $j,j+1$ are taken modulo $p_\Cc$. Furthermore, each system $(\SS_j, \sigma^{p_\Cc})$ is a mixing subshift of finite type.

Using the same notation as before, let $(\overline{\SS}_\Cc, \sigma)$ be the two-sided shift space defined over $\Cc$.
For a function $\Psi$ on $\SS_\Cc$ we can define the induced potential $\Psi$ on $\overline{\SS}_\Cc$ by setting $\P(x)=\P(x_0, x_1, \dots)$ (we abuse notation and also call this function $\P$). This function only depends on the non-negative indices of an input. We now want to introduce the \textit{pressure} of a potential $\P$ on $\SS_\Cc$ and will do so via the so-called variational principle.
In the following we use  $\Mcc(\s, \SS_\Cc)$ to denote the set of $\sigma$-invariant Borel probability measures on $\SS_\Cc$.
We write $h(\s, \lambda)$ for the measure theoretical entropy of $(\SS_\Cc, \s, \lambda)$ (see Section 3 of \cite{ParryPollicott}).
Given a finite sequence $x_0, \ldots, x_{n-1}$ we write $[x_0, \dots, x_{n-1}]$ for the corresponding cylinder sets in $\overline{\SS}_\Cc$, i.e. $[x_0, \dots, x_{n-1}]$ denotes the collection of sequences in $\overline{\SS}_\Cc$ that agree with $(x_0,x_1,\ldots,x_{n-1})$ on the $0$th to $(n-1)$st indices. These objects are defined analogously for $\overline{\Sigma}_\Cc$.
\def\Pr{{\rm P}}
\begin{proposition}[The Variational Principle, Theorem 3.5 \cite{ParryPollicott}]\label{prop.vp}
If $\P$ is a H\"older continuous function on $\overline{\SS}_\Cc$ then the following supremum
\[
\Pr_\Cc(\P) =\sup_{\lambda \in \Mcc(\sigma, \overline{\SS}_\Cc)}\left\{h(\s, \lambda)+\int_{\overline{\SS}_\Cc}\P\,d\lambda\right\}
\]
is attained by a unique $\sigma$-invariant Borel probability measure $\m_\Cc$ on $\overline{\SS}_\Cc$, and further there exists a positive constant $c > 1$ such that
\begin{equation}\label{Eq:Gibbs}
c^{-1} \le \frac{\m_\Cc[x_0, \dots, x_{n-1}]}{\exp\(-n\textnormal{P}_\Cc(\P)+S_n \P(x)\)} \le c
\end{equation}
for all $x \in [x_1, \dots, x_{n-1}]$ and for all $n \in \Z_{\ge 1}$,
where $S_n\P$ is the $n$th Birkhoff sum.
The quantity $\Pr_\Cc(\P)$ is referred to as the \textit{pressure} of $\P$ over $\Cc$.
\end{proposition}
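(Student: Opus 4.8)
The plan is to prove the statement by the classical Ruelle--Perron--Frobenius transfer operator method, after first reducing to a one-sided shift. \emph{Step 1 (reduction to one-sided shifts).} I would use the standard lemma that any H\"older function $\P$ on the two-sided shift $\overline{\SS}_\Cc$ is cohomologous, through a H\"older coboundary, to a function depending only on the non-negative coordinates; such a function descends to a H\"older function $\widetilde{\P}$ on the one-sided shift $\SS_\Cc$. Passing to a coboundary changes neither $\Pr_\Cc$, nor the Birkhoff sums $S_n\P$ over periodic orbits, nor the validity of \eqref{Eq:Gibbs}, and it sets up a bijection between $\s$-invariant probability measures on $\overline{\SS}_\Cc$ and on $\SS_\Cc$ preserving both entropy and $\int \P\, d\lambda$. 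So it is enough to prove everything for $\widetilde{\P}$ on $\SS_\Cc$.

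\emph{Step 2 (spectral analysis).} Define the transfer operator $L = L_{\widetilde{\P}}$ on $F_\theta(\SS_\Cc)$ by $(Lf)(x) = \sum_{\s y = x} e^{\widetilde{\P}(y)} f(y)$. A Lasota--Yorke inequality of the form $\|L^n f\|_\theta \le A\theta^n\|f\|_\theta + B|f|_\infty$ shows $L$ acts quasi-compactly on $F_\theta(\SS_\Cc)$; combined with the topological transitivity of $\SS_\Cc$ and a positivity argument, this gives that the spectral radius $\lambda := e^{\Pr_\Cc(\P)} > 0$ is a simple eigenvalue with a strictly positive eigenfunction $h \in F_\theta(\SS_\Cc)$, and that $L^*\nu = \lambda\nu$ for some Borel probability measure $\nu$ on $\SS_\Cc$. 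When the component $\Cc$ is aperiodic the remainder of the spectrum of $L$ lies in a disc of radius strictly less than $\lambda$; for a general connected component I would run this argument for $\s^{p_\Cc}$ on each mixing piece $\SS_j$ and recombine.

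\emph{Step 3 (Gibbs measure).} Setting $\m_\Cc = h\nu$ normalised so that $\int h\, d\nu = 1$, the relations $Lh = \lambda h$ and $L^*\nu = \lambda\nu$ force $\m_\Cc$ to be $\s$-invariant. Iterating $\nu(f) = \lambda^{-n}\nu(L^n f)$ on indicators of cylinders, using the H\"older control on $S_n\widetilde{\P}$ (bounded distortion along a cylinder) together with $0 < \inf h \le \sup h < \infty$, yields the two-sided estimate \eqref{Eq:Gibbs}.

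\emph{Step 4 (variational principle and uniqueness).} For an arbitrary $\lambda \in \Mcc(\s, \SS_\Cc)$, I would apply the Kolmogorov--Sinai entropy formula along the generating partition $\alpha$ into $1$-cylinders, $h(\s,\lambda) = \lim_n \tfrac1n H_\lambda\!\left(\bigvee_{i=0}^{n-1}\s^{-i}\alpha\right)$, and combine it with the Gibbs bound \eqref{Eq:Gibbs} and Jensen's inequality for the concave function $\log$ to obtain $h(\s,\lambda) + \int \widetilde{\P}\, d\lambda \le \Pr_\Cc(\P)$, with equality when $\lambda = \m_\Cc$. The equality case of Jensen then forces any maximising measure to satisfy a uniform Gibbs bound for $\widetilde{\P}$; two such measures are mutually absolutely continuous with bounded densities, hence equal by the ergodicity of $\m_\Cc$ (a consequence of the spectral gap). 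Transporting the conclusion back to $\overline{\SS}_\Cc$ via Step 1 completes the argument. \emph{The main obstacle} is Step 2: proving quasi-compactness of $L$ together with the simplicity of its leading eigenvalue. This is the only place the hypotheses genuinely bite -- topological transitivity is needed for simplicity, and for the clean form of the spectral gap one passes to $\s^{p_\Cc}$ on the aperiodic pieces.
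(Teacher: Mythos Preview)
Your proposal is correct and is essentially the classical Ruelle--Perron--Frobenius argument. Note, however, that the paper does not supply its own proof of this proposition: it is quoted verbatim as Theorem~3.5 of \cite{ParryPollicott} and used as a black box. The approach you outline is precisely the one carried out in that reference (reduction to the one-sided shift via a H\"older coboundary, spectral analysis of the transfer operator on $F_\theta$, construction of the Gibbs measure as $h\,\nu$, and the entropy--Jensen argument for the variational inequality and uniqueness), so there is nothing to compare.
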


The pressures can be related to the spectral radii of the following linear operators, known as \textit{transfer operators}. Fix $\P \in F_\theta(\SS_\Cc)$ and define $L_\Cc : F_\theta(\SS_\Cc) \to F_\theta(\SS_\Cc)$ by
\[
L_{\Cc} \o(x) = \sum_{\s(y) = x} e^{\P(y)} \o(y).
\]
Then the spectral radius of this operator is $e^{\Pr_\Cc(\P)}$ and furthermore, this operator has $p_\Cc$ simple maximal eigenvalues: $e^{2\pi l/p_\Cc}e^{\Pr(\P)}$ for $l =1, \ldots, p_\Cc$. The rest of the spectrum is contained in a disk in $\C$ centered at $0$ with radius strictly smaller than $e^{\Pr(\P)}$ \cite{ParryPollicott}.

When we are considering a Cannon coding with multiple connected components and $\P \in F_\theta(\SS_A)$, we will write
\[
\Pr(\P)=\max_\Cc \Pr_\Cc(\P),
\]
where $\Cc$ runs over all components in $\Gc$.
\begin{definition}
A component $\Cc$ is called {\it maximal} for $\P$ (or \textit{$\P$-maximal}) if $\Pr(\P)=\Pr_\Cc(\P)$.
We call a component $\Cc$ \textit{word maximal} if it is maximal for the constant function with value $1$. 
\end{definition}
Note that the word maximal components for a Cannon coding for $\G,S$ are precisely the components
that have spectral radius given by the growth rate of $|\cdot|_S$.

\begin{lemma}[Example 4.11 \cite{cantrell.tanaka.1} and Lemma 4.8 \cite{cantrell.tanaka.1}]\label{lem.holder}
Take a strongly hyperbolic metric $d$ in $\Dc_\G$.
Then, for any Cannon coding and associated subshift $\SS$, we can find a H\"older continuous potential $\P_d $ such that
\[
S_n\P_d(x)=\sum_{i=0}^{n-1}\P_d(\s^i(x))= d(o, \ev_n(x))+O(1) \quad \text{for all $x \in \SS$}
\]
uniformly in $x, n$. If $x_0 = \ast$ (i.e. the first entry in $x$ corresponds to the state $\ast$) then $S_n\P_d(x) = d(o,\ev_n(x))$.  In fact, we may take $\P_d(x) = \beta_o(\ev_1(x),\ev(x))$.
Furthermore, if $v_d$ is the exponential growth rate of $d$, then $\Pr(-v_d\P_d) = 0$.
\end{lemma}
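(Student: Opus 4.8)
The plan is to realise $\P_d$ as the Busemann cocycle of $d$ pulled back to the coding: I would set $\P_d(x)=\beta_o(\ev_1(x),\ev(x))$ for $x\in\SS$, where $\beta_o$ denotes the Busemann cocycle of the strongly hyperbolic metric $d$ (which, as recorded in the preliminaries, is a genuine H\"older cocycle, not merely a quasi-cocycle). There are then three things to establish: (i) $\P_d\in F_\theta(\SS)$ for a suitable $\theta\in(0,1)$; (ii) the Birkhoff sums satisfy $S_n\P_d(x)=d(o,\ev_n(x))+O(1)$ uniformly in $x$ and $n$; and (iii) $\Pr(-v_d\P_d)=0$.

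For (i) I would argue as follows. The label $\ev_1(x)=w(x_0,x_1)$ ranges over the finite set $S$, and for each $s\in S$ one has $|\beta_o(s,\cdot)|\le d(o,s)$ on $\partial\G$, so $\P_d$ is bounded. If $x,x'\in\SS$ agree in their first $N$ coordinates then $\ev_k(x)=\ev_k(x')$ for all $k<N$; since the geodesic words coded by $x$ and $x'$ are quasi-geodesics for $d$ that agree up to length $N$, $\delta$-hyperbolicity forces $(\ev(x)|\ev(x'))_o\ge cN-O(1)$, hence $q(\ev(x),\ev(x'))\le Ce^{-cN}$. Feeding this into the H\"older continuity of $\beta_o$ in its boundary argument gives $|\P_d(x)-\P_d(x')|\le C'e^{-c'N}$, which, together with boundedness, puts $\P_d$ in $F_\theta(\SS)$ once $\theta$ is taken close enough to $1$. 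Sequences that enter the auxiliary vertex $0$ are harmless: there $\ev_1=o$ and $\beta_o(o,\cdot)\equiv0$, so such tails contribute nothing.

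For (ii), writing $g_i=w(x_i,x_{i+1})$ so that $\ev_n(x)=g_0\cdots g_{n-1}$, one checks that $\ev_1(\sigma^i x)=g_i$ and $\ev(\sigma^i x)=(g_0\cdots g_{i-1})^{-1}\ev(x)$. Telescoping the cocycle identity $\beta_o(uv,\xi)=\beta_o(u,\xi)+\beta_o(v,u^{-1}\xi)$ then yields
\[
S_n\P_d(x)=\sum_{i=0}^{n-1}\beta_o\big(g_i,(g_0\cdots g_{i-1})^{-1}\ev(x)\big)=\beta_o(\ev_n(x),\ev(x)).
\]
This is exactly where strong hyperbolicity is indispensable: for a general metric in $\Dc_\G$ the Busemann function is only a quasi-cocycle, and telescoping $n$ terms would accumulate an error of order $n$. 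To finish, I would observe that the geodesic word coding $\ev(x)$ is a quasi-geodesic for $d$, so the Morse lemma places $\ev_n(x)$ within bounded Hausdorff distance of a $d$-rough geodesic ray towards $\ev(x)$; a routine $\delta$-hyperbolicity computation then shows that $\beta_o(\ev_n(x),\ev(x))$ and $d(o,\ev_n(x))$ differ by a uniformly bounded amount, which is (ii). The main obstacle is precisely this: getting the $O(1)$ error uniformly in both $n$ and $x$, which needs the exactness of the cocycle together with the uniform comparison between $d$ and the word metric along the coded geodesics.

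Finally, for (iii), I would use that $\ev$ is a bijection between the finite directed paths from $\ast$ in $\Gc$ and $\G$, with a length-$n$ path corresponding to a group element at $d$-distance $S_n\P_d+O(1)$ from $o$, so by (ii) the series
\[
\sum_{x\in\G}e^{-sd(o,x)}\quad\text{and}\quad\sum_{n\ge0}\ \sum_{\gamma\ \text{of length}\ n}e^{-sS_n\P_d(\gamma)}
\]
have the same abscissa of convergence. Decomposing a path from $\ast$ according to the boundedly many, linearly ordered connected components it traverses, and applying the transfer-operator/Gibbs estimates of Proposition~\ref{prop.vp} on each transitive component, one sees that this common abscissa is $\inf\{s:\Pr(-s\P_d)<0\}$. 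Since $s\mapsto\Pr(-s\P_d)$ is continuous and strictly decreasing (because $S_n\P_d/n$ is bounded below by a positive constant along nontrivial loops), this infimum is the unique zero of $s\mapsto\Pr(-s\P_d)$; as by definition the abscissa equals the exponential growth rate $v_d$, we conclude $\Pr(-v_d\P_d)=0$. Once (ii) is in hand, (i) is bookkeeping with the boundary quasi-metric and (iii) is standard thermodynamic formalism.
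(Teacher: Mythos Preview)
The paper does not actually prove this lemma: it is stated with a citation to Example~4.11 and Lemma~4.8 of \cite{cantrell.tanaka.1} and no argument is given in the present text. Your proposal reconstructs precisely the argument that underlies those references, and it is correct in all essential respects: the definition $\P_d(x)=\beta_o(\ev_1(x),\ev(x))$, the H\"older estimate via the exponential contraction of the boundary quasi-metric along coded geodesics, the exact telescoping of the Busemann cocycle (this is the place where strong hyperbolicity is genuinely used, as you note), the $O(1)$ comparison of $\beta_o(\ev_n(x),\ev(x))$ with $d(o,\ev_n(x))$ via the Morse lemma, and the identification of $v_d$ with the zero of $s\mapsto\Pr(-s\P_d)$ by matching abscissae of convergence through the component decomposition of paths from $\ast$.

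One small point worth tightening: for sequences that eventually enter the auxiliary vertex $0$, the endpoint $\ev(x)$ is a group element rather than a boundary point, so strictly speaking you are using the extension of $\beta_o(\cdot,\cdot)$ to $\G\cup\partial\G$ in its second argument (which the paper records via the extended Gromov product). Your remark that $\beta_o(o,\cdot)\equiv0$ handles the tail contributions, but the H\"older estimate in (i) also needs a word about continuity as a sequence $x^{(k)}$ with increasingly long non-trivial prefixes approaches an infinite coded geodesic; this follows from the same contraction estimate and the continuity of $\beta_o$ on $\G\cup\partial\G$, but it is worth saying explicitly. Otherwise your write-up is a faithful expansion of what the citation encodes.
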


As we continue, given a strongly hyperbolic metric $d \in \Dc_\G$, $\P_d$ will refer to the H\"older potential from this lemma.
We end this section with the following important observation.

\begin{lemma}[Lemma 4.7 \cite{cantrell.tanaka.1}]\label{lem.disjointmc}
Fix a strongly hyperbolic metric $d\in\Dc_\G$, a generating set $S$ and a Cannon coding $\Gc$ for $\G, S$. Then the $-v_d \P_d$ maximal components are disjoint, that is, there does not exist a path in $\Gc$ connecting vertices belonging to distinct $-v_d \P_d$ maximal components.
\end{lemma}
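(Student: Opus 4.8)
The plan is to argue by contradiction, playing off the order of the pole of the Poincar\'e series $\eta(s)=\sum_{x\in\G}e^{-sd(o,x)}$ at $s=v_d$: a chain of two distinct $-v_d\P_d$ maximal components would force a pole of order at least two, while $\eta$ has at most a simple pole there. To see the latter, combine the Patterson--Sullivan construction of Section~2.2 with a covering argument: applying the shadow estimate \eqref{eq.shadow} with $a=v_d$, $b=0$ (and any auxiliary metric), and using that for $R$ large the shadows $\Oc(x,R)$ with $d(o,x)\in[n,n+1)$ cover $\partial\G$ with multiplicity bounded independently of $n$, one obtains $C^{-1}e^{v_dT}\le N(T)\le Ce^{v_dT}$ for some $C>1$ and all large $T$, where $N(T)=\#\{x\in\G:d(o,x)<T\}$ and $\mu_{v_d,0}(\partial\G)<\infty$ since $\partial\G$ is compact. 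Integrating by parts, $\eta(s)=s\int_0^\infty e^{-sT}N(T)\,dT$ for $\mathrm{Re}(s)>v_d$, so for real $s\in(v_d,v_d+1)$ we get $C_1^{-1}(s-v_d)^{-1}\le\eta(s)\le C_1(s-v_d)^{-1}$; in particular $(s-v_d)^2\eta(s)\to0$ as $s\downarrow v_d$.

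Now suppose for contradiction that there are distinct $-v_d\P_d$ maximal components $\Cc_1\neq\Cc_2$ joined by a path in $\Gc$ (throughout, ``component'' means a component of the underlying un-augmented Cannon graph, on which $\P_d$ is the honest Busemann potential; the auxiliary sink vertex $0$, which has no outgoing edge to any other vertex, plays no role). Since $\Cc_1,\Cc_2$ are maximal connected subgraphs, any directed path between two vertices of one of them stays inside it, and since $\Cc_1\ne\Cc_2$ not every path from $\ast$ to $\Cc_1$ passes through $\Cc_2$. Fix a shortest path $p_0$ from $\ast$ into $\Cc_1$ avoiding $\Cc_2$, and a shortest path $q$ from $\Cc_1$ to $\Cc_2$; by minimality and the previous remark, $p_0$ meets $\Cc_1$ only at its endpoint $a'$ and misses $\Cc_2$, while $q$ meets $\Cc_1$ only at its initial vertex $a''$ and $\Cc_2$ only at its endpoint $b$. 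For every directed path $\gamma_1$ in $\Cc_1$ from $a'$ to $a''$ (these exist, $\Cc_1$ being connected) and every loop $\gamma_2$ at $b$ in $\Cc_2$, the concatenation $P(\gamma_1,\gamma_2)=p_0\gamma_1 q\gamma_2$ is a path in $\Gc$ from $\ast$; its stretch between the first and last visits to $\Cc_1$ is precisely $\gamma_1$, and its stretch from the first visit to $\Cc_2$ to its end is precisely $\gamma_2$. Hence $(\gamma_1,\gamma_2)\mapsto P(\gamma_1,\gamma_2)$ is injective, and since $\ev$ restricts to a bijection between paths from $\ast$ and $\G$, so is $(\gamma_1,\gamma_2)\mapsto\ev(P(\gamma_1,\gamma_2))\in\G$.

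Because $\P_d\in F_\theta(\SS)$ is H\"older, the Birkhoff sum $S_n\P_d$ over a length-$n$ block depends on the block only up to a uniform additive error; with Lemma~\ref{lem.holder} (which gives $S_n\P_d(x)=d(o,\ev_n(x))+O(1)$) this yields, along the concatenations above,
\[
d\big(o,\ev(P(\gamma_1,\gamma_2))\big)=d\big(o,\ev(\gamma_1)\big)+d\big(o,\ev(\gamma_2)\big)+O(1)
\]
uniformly, the $O(1)$ swallowing the fixed contributions of $p_0,q$ and the splitting errors. Hence for real $s\in(v_d,v_d+1)$,
\[
\eta(s)\ \ge\ \sum_{\gamma_1,\gamma_2}e^{-s\,d(o,\ev(P(\gamma_1,\gamma_2)))}\ \ge\ C_2^{-1}\Big(\sum_{\gamma_1}e^{-s\,d(o,\ev(\gamma_1))}\Big)\Big(\sum_{\gamma_2}e^{-s\,d(o,\ev(\gamma_2))}\Big),
\]
the first inner sum over paths $a'\to a''$ in $\Cc_1$, the second over loops at $b$ in $\Cc_2$. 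I would then show each inner sum is bounded below by a positive constant times $(s-v_d)^{-1}$ as $s\downarrow v_d$. Up to the uniform $O(1)$, each inner sum is, by a standard argument, comparable to $\sum_{n\ge1}\lambda_i(s)^n$ times a positive constant depending on the fixed endpoints, where $\lambda_i(s)=e^{\Pr_{\Cc_i}(-s\P_d)}$ is the leading eigenvalue of the transfer operator $L_{\Cc_i}$ built from $-s\P_d$ (which, $\Cc_i$ being transitive, is governed by the Perron--Frobenius theory recalled above, up to $p_{\Cc_i}-1$ peripheral eigenvalues of the same modulus and the rest of the spectrum strictly smaller); hence it is comparable to $\big(1-\lambda_i(s)^{p_{\Cc_i}}\big)^{-1}$ times a positive constant, plus a term bounded near $v_d$. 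Since $s\mapsto\Pr_{\Cc_i}(-s\P_d)$ is real-analytic with $\Pr_{\Cc_i}(-v_d\P_d)=\Pr(-v_d\P_d)=0$ (maximality, together with Lemma~\ref{lem.holder}) and derivative $-\int\P_d\,d\m_{\Cc_i}<0$ at $s=v_d$ --- strictly, because $S_n\P_d(x)=d(o,\ev_n(x))+O(1)\ge L^{-1}n-O(1)$ along any path, $d$ being quasi-isometric to a word metric, so $\int\P_d\,d\m_{\Cc_i}\ge L^{-1}$ by the ergodic theorem --- we get $1-\lambda_i(s)^{p_{\Cc_i}}=c_i(s-v_d)+O((s-v_d)^2)$ with $c_i>0$. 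Combining, $\eta(s)\ge c'(s-v_d)^{-2}$ as $s\downarrow v_d$, contradicting $\eta(s)\le C_1(s-v_d)^{-1}$; hence no such path exists.

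The step I expect to be the main obstacle is the double bookkeeping: on the one hand, justifying rigorously that a path in $\Gc$ records which component it is currently traversing --- so that the decomposition of $P(\gamma_1,\gamma_2)$ can be read back off and injectivity genuinely holds, including the periodic case $p_{\Cc_i}>1$ --- and on the other, turning the $O(1)$ discrepancy between $S_n\P_d$ and $d$-displacement into a genuinely bounded multiplicative factor in the transfer-operator generating functions, while correctly handling the $p_{\Cc_i}$ peripheral eigenvalues when a maximal component is not aperiodic. The bound $N(T)\le Ce^{v_dT}$, although classical (it is the $\Dc_\G$-analogue of Coornaert's purely exponential growth), also deserves care, since it is exactly what makes the pole of $\eta$ at $v_d$ simple.
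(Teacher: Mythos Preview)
Your argument is correct and follows the standard route. Note first that the paper does not actually give a proof of this lemma: it is quoted verbatim as Lemma~4.7 of \cite{cantrell.tanaka.1}, so there is nothing in the present paper to compare your proposal against beyond the citation. That said, your approach is precisely the classical one underlying the cited result (and, indeed, the original Coornaert--type argument for word metrics): purely exponential growth of balls forces the Poincar\'e series to have at most a simple pole at $s=v_d$, while a directed path joining two distinct $-v_d\P_d$-maximal components would allow one to concatenate paths through both and produce a double pole. Your handling of the details is fine. The injectivity of $(\gamma_1,\gamma_2)\mapsto P(\gamma_1,\gamma_2)$ is immediate once one observes (as you implicitly do) that if there is a directed path $\Cc_1\to\Cc_2$ then there is none $\Cc_2\to\Cc_1$, so any path from $\ast$ to $\Cc_1$ automatically avoids $\Cc_2$; thus the vertices of $P(\gamma_1,\gamma_2)$ lying in $\Cc_1$ (resp.\ $\Cc_2$) are exactly those of $\gamma_1$ (resp.\ $\gamma_2$), and the decomposition can be read off. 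The splitting estimate $d(o,\ev(P))=d(o,\ev(\gamma_1))+d(o,\ev(\gamma_2))+O(1)$ is a legitimate consequence of Lemma~\ref{lem.holder} applied to the shifted sequences, with the fixed pieces $p_0,q$ absorbed into the $O(1)$. Finally, the lower bound on each inner sum via RPF theory is standard: the positivity of the projection $Q_0\mathbb{1}_{[a']}$ on the cylinder $[a'']$ follows because the leading eigenfunction and eigenmeasure are strictly positive on a transitive component, and only the $l=0$ term in the peripheral spectrum blows up as $s\downarrow v_d$. The concerns you flag at the end are routine rather than genuine obstacles.
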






\subsection{The Mineyev flow}
The Mineyev topological flow is an analogue of a geodesic flow in our coarse geometric setting.
To avoid introducing more notation we will not include an exposition defining the flow. Instead we refer the reader to
Sections 2.7 of \cite{cantrell.tanaka.1}.

Given a strongly hyperbolic metric $d\in\Dc_\G$ we can construct the Mineyev flow $\Fc_\k$ coming from a cocycle $\k$ associated to $d$. In \cite{cantrell.tanaka.2} it was shown that the Mineyev flow defined over a strongly hyperbolic metric can be coded by a suspension flow over a subshift of finite type. For more information of suspensions of subshifts in the current setting, see Section 4.2 of \cite{cantrell.tanaka.2}.

\begin{theorem}\cite{cantrell.tanaka.2} \label{thm.enhanced}
For any non-elementary hyperbolic group $\G$, 
let $\Fc_\k$ be the topological flow space associated with a cocycle $\k$ associated to a strongly hyperbolic metric.
There exists a topologically transitive subshift of finite type $(\overline{\SS}_0, \s)$ with a positive H\"older continuous function $r_0$
such that a natural continuous map $\Pi_0$ from the associated suspension flow $\text{Sus}(\overline{\SS}_0, r_0)$ to $\Fc_\k$
is surjective, equivariant with the flows and the cardinality of each fiber is uniformly bounded.
\end{theorem}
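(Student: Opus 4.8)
\textbf{Proof proposal for Theorem \ref{thm.enhanced}.}
The plan is to reconstruct the coding of \cite{cantrell.tanaka.2} from a Cannon coding for $\G$ together with the Busemann cocycle of $d$. Fix a finite generating set $S$ and a Cannon coding $\Gc$ for $(\G,S)$, with two-sided subshift $\overline{\SS}$; delete the initial state $\ast$ and the adjoined zero state, and let $\overline{\SS}_0$ be any connected component of the remaining graph, so that $\overline{\SS}_0$ is topologically transitive. Since every path in $\Gc$ is a $\Cay(\G,S)$-geodesic through $o$ (Definition \ref{def.sms}(2)), any loop in $\overline{\SS}_0$ evaluates under $\ev$ to an infinite-order element, and in particular $\overline{\SS}_0$ is nontrivial. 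The first step is the boundary map $\partial=(\partial^-,\partial^+)\colon\overline{\SS}_0\to\partial^2\G$: for $x=(x_n)_{n\in\Z}$, the forward and backward finite products along $x$ are geodesic segments through $o$, their bi-infinite concatenation fellow-travels a geodesic line at bounded distance, and $\partial(x)$ is the pair of endpoints of that line. Using $\delta$-thinness one checks that $\partial$ is H\"older continuous from $(\overline{\SS}_0,d_\theta)$ to $(\partial^2\G,q\times q)$, and that $\partial$ has fibres of uniformly bounded cardinality: two admissible bi-infinite paths with the same endpoint pair track a common geodesic line within bounded Hausdorff distance by the Morse lemma \cite{BonkSchramm}, and the local finiteness of $\Gc$ together with the bounded cancellation built into the Cannon coding bounds, uniformly in the length scale, how many admissible sub-paths can shadow a fixed segment.

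Next I would attach the flow direction. For strongly hyperbolic $d$ the Busemann function $\beta_o$ is an honest H\"older cocycle (as used in the proof of Theorem \ref{thm.ls}), and $\Fc_\k$ is flow-equivalent to the quotient of $\partial^2\G\times\R$ by the $\G$-action twisted by $\k$, with $\k$ built from $\beta_o$ so that translating the basepoint of a geodesic line by $g\in\G$ shifts the $\R$-coordinate by $\beta_o(g,\cdot)$. Take $r_0$ to be a strictly positive H\"older function on $\overline{\SS}_0$ cohomologous to the potential $\P_d$ of Lemma \ref{lem.holder}; such an $r_0$ exists by a standard argument, since $S_n\P_d(x)=d(o,\ev_n(x))+O(1)$ and the loops of the transitive component $\overline{\SS}_0$ have positive translation length, so $S_n\P_d\ge\epsilon n-C$ on periodic orbits. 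Letting $h$ be a H\"older transfer function with $\P_d-r_0=h\circ\sigma-h$, define
\[
\Pi_0\big([(x,t)]\big)=\big[\big(\partial^-(x),\partial^+(x),\,t-h(x)\big)\big]\in\Fc_\k .
\]
Since the shift $\sigma$ corresponds to translating the basepoint by $\ev_1(x)\in S$ --- so that $\partial^\pm(\sigma x)=\ev_1(x)^{-1}\partial^\pm(x)$ and the induced $\R$-shift equals $\beta_o(\ev_1(x),\ev(x))=\P_d(x)$ --- the identity $\P_d-r_0=h\circ\sigma-h$ is precisely what makes $\Pi_0$ respect the suspension relation $(x,t+r_0(x))\sim(\sigma x,t)$. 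Equivariance with the flows (each translates the $\R$-coordinate) is immediate, and continuity of $\Pi_0$ follows from H\"older continuity of $\partial$.

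It remains to check that $\Pi_0$ is surjective with uniformly bounded fibres. The image of $\Pi_0$ is the $\G$-saturation of $\partial(\overline{\SS}_0)\times\R$ inside $\Fc_\k$; it is nonempty, closed (as $\text{Sus}(\overline{\SS}_0,r_0)$ is compact and $\Pi_0$ continuous) and $\G$-invariant, hence all of $\Fc_\k$ by minimality of the $\G$-action on $\partial^2\G$ --- concretely, $\partial(\overline{\SS}_0)$ contains a pair $(g^-,g^+)$ of fixed points of an infinite-order element, and such a pair has dense $\G$-orbit in $\partial^2\G$. For the fibres, a point of $\Pi_0^{-1}([(\xi_-,\xi_+,t)])$ has $\partial$-image in the $\G$-orbit of $(\xi_-,\xi_+)$ and suspension coordinate pinned down modulo the return-time relation; the uniform bound on the fibres of $\partial$, combined with the fact that for fixed $x$ the coordinate is determined up to the bounded error in $S_n\P_d(x)=d(o,\ev_n(x))+O(1)$, gives a bound on $\#\Pi_0^{-1}(\cdot)$ depending only on $\G$, $S$, $d$ and the coding.

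The main obstacle is the uniform bound on fibres --- first for $\partial$, then for $\Pi_0$ --- which is the one place the fine combinatorics of the Cannon coding genuinely enters: one must exclude, with a bound independent of the length scale, arbitrarily many distinct admissible bi-infinite paths shadowing the same geodesic line. The other ingredients are cocycle bookkeeping for the well-definedness of $\Pi_0$ and soft topology (compactness of the suspension together with minimality of the boundary action) for surjectivity.
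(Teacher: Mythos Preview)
The paper does not prove Theorem \ref{thm.enhanced}; it is quoted from \cite{cantrell.tanaka.2}. However, the paper does record two facts about how the coding is built there (see the paragraph following the statement): the base shift $\overline{\SS}_0$ is the two-sided shift over a \emph{$-v_d\Psi_d$-maximal} component, and the roof $r_0$ is $S_N\Psi_d$ for a sufficiently large $N$ (a recoding, not a coboundary correction). Your outline follows the right architecture --- boundary map, cocycle bookkeeping, suspension --- but it departs from \cite{cantrell.tanaka.2} at exactly the point where the choice of component matters, and the surjectivity argument breaks as a consequence.

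The genuine gap is your sentence ``let $\overline{\SS}_0$ be any connected component of the remaining graph''. An arbitrary component of the Cannon graph need not be maximal, and for a non-maximal component the suspension you build has topological entropy strictly less than that of $\Fc_\k$; since your $\Pi_0$ has uniformly bounded fibres, it would then have to fail to be surjective. Your surjectivity argument does not catch this because it is not correct as written: you claim the image of $\Pi_0$ is ``the $\G$-saturation of $\partial(\overline{\SS}_0)\times\R$'' and that this is closed and $\G$-invariant, hence everything by minimality. But in $\Fc_\k$ there is no residual $\G$-action to invoke, and upstairs in $\partial^2\G\times\R$ the $\G$-saturation of a compact set need not be closed (the action is proper, not cocompact in the relevant sense on the saturation), so ``closed $+$ contains a dense orbit'' does not apply. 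The density of the $\G$-orbit of a single pair $(g^-,g^+)$ in $\partial^2\G$ is true but does not rescue the argument: you would need $\partial(\overline{\SS}_0)$ itself (not its $\G$-saturation) to be large, and this is exactly what maximality of the component provides, via growth quasi-tightness and Lemma \ref{lem.gqtprod}. Indeed, establishing that a maximal component sees enough of the group is nontrivial --- in this very paper, Proposition \ref{prop.loops} is devoted to showing that loops in a word-maximal component represent powers of all non-torsion conjugacy classes.

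A secondary point: you produce $r_0>0$ by passing to a cohomologous function. This is fine for building the suspension, but note that the construction in \cite{cantrell.tanaka.2} instead recodes by taking $r_0=S_N\Psi_d$ on $(\overline{\SS}_\Cc,\sigma)$ with $N$ large; the two suspensions are flow-equivalent, so this is a matter of presentation rather than a gap.
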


Importantly, the H\"older function $r_0$ in the above theorem  can be taken to be $S_N\Psi_d$  where $\P_d$ is the function from Lemma \ref{lem.holder} and $N \ge 1$ is a sufficiently large integer. Furthermore, the suspension flow $\text{Sus}(\overline{\SS}_0, r_0)$  can be taken as the suspension over $\overline{\SS}_\Cc$ where $\Cc$ is any $-v_d\Psi_d$ maximal component (see Section 4.4 of \cite{cantrell.tanaka.2}). This fact will be crucial to our work.

Using Theorem \ref{thm.enhanced} we can improve upon some of the results from the previous section. Fix a strongly hyperbolic metric $d$. Throughout the following let $\overline{\SS}_0$ denote the the subshift of finite type defined over a fixed $-v_d \Psi_d$ maximal component as in Theorem \ref{thm.enhanced}.  The following two lemmas from \cite{cantrell.tanaka.2} will be useful.

\begin{lemma}[Lemma 5.2 \cite{cantrell.tanaka.2}]\label{lem.measub}
Suppose that $\L$ is a Radon measure on $\partial^2 \G$. Then, there exists a probability measure $\lambda$ on $\overline{\SS}_0$ such that $\lambda$ is $\s$-invariant and  $
\ev_\ast \lambda \le C \L $ on $\partial^2 \G$
for some positive constant $C>0$.
If for the strongly hyperbolic metric $d$ defining $\Psi_d$, $D_{q}(\x_-, \x_+: \L) \ge D$ for $\L$-almost every $(\x_-, \x_+)$ in $\partial^2 \G$
where $q$ is the associated quasi-metric in $\partial \G$,
then
\[
h(\s, \lambda)+\frac{D}{2}\int_{\overline{\SS}_0}\P_d \ d\lambda \ge 0,
\]
where $\P_d(x)= \b_o(\ev_1(x), \ev(x))$ for $x \in \overline{\SS}_0$.
\end{lemma}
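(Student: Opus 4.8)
The plan is to manufacture $\lambda$ by lifting a finite piece of $\L$ through the coding map of the Mineyev flow and then averaging along the shift, and afterwards to read off the entropy inequality from the dimension hypothesis by a mass--distribution (Frostman--type) argument transported from $\partial^2 \G$ to $\overline{\SS}_0$.

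\textbf{Construction of $\lambda$.} Unravelling the suspension coding $\Pi_0\colon \mathrm{Sus}(\overline{\SS}_0,r_0)\to\Fc_\k$ of Theorem \ref{thm.enhanced} produces a Borel map $\ev\colon\overline{\SS}_0\to\partial^2 \G$ sending a bi-infinite admissible sequence to the ordered pair of endpoints of the associated bi-infinite geodesic (anchored at $o$); it is boundedly finite-to-one onto its image, and $\ev\circ\sigma^n$ differs from $\ev$ only by the action of the group element $\ev_n(x)$, i.e. $\ev(\sigma^n x)=\ev_n(x)^{-1}\cdot\ev(x)$ for the diagonal action of $\G$ on $\partial^2 \G$. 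Since $\L$ is Radon, I would choose a compact $K\subseteq\partial^2 \G$ (e.g. a product of two shadow sets staying off the diagonal) with $0<\L(K)<\infty$, split $\overline{\SS}_0$ into finitely many Borel sets on each of which $\ev$ is injective, pull $\L|_K$ back over each piece and add: this gives a finite Borel measure $\nu$ on $\overline{\SS}_0$ with $\ev_*\nu\le M\L$, where $M$ bounds the fibre cardinalities. Then take $\lambda$ to be a weak-$*$ limit point of the Ces\`aro averages $\frac{1}{N\nu(\overline{\SS}_0)}\sum_{k=0}^{N-1}\sigma^k_*\nu$; this is a $\sigma$-invariant Borel probability measure on $\overline{\SS}_0$. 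It remains to check $\ev_*\lambda\le C\L$ for a suitable $C$, using that $\ev$ intertwines $\sigma$ with the $\G$-action, that the $\G$-action on $\partial^2 \G$ is cocompact (so the translates occurring in the averages stay in a fixed compact region), and that the pushforward under $\ev$ of any $\sigma$-invariant measure is a $\G$-invariant current on $\partial^2 \G$. This yields the first assertion.

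\textbf{The entropy inequality.} Assume $D_q(\xi_-,\xi_+:\L)\ge D$ for $\L$-a.e.\ $(\xi_-,\xi_+)$. Because $\ev_*\lambda\le C\L$, the same holds $\ev_*\lambda$-almost everywhere, hence for $\lambda$-a.e.\ $x$ the pair $\ev(x)=(\xi_-,\xi_+)$ satisfies: for every $\epsilon>0$ one has $\L(B(\xi_-,r)\times B(\xi_+,r))\le r^{\,D-\epsilon}$ for all small $r$. The geometric input is the shadow estimate underlying Lemma \ref{lem.holder}: fixing the symbols $x_0,\dots,x_n$ confines $\xi_+$ to a $q$-ball whose radius shrinks at the exponential rate dictated by $S_n\Psi_d(x)=d(o,\ev_n(x))+O(1)$, and symmetrically the symbols $x_{-n},\dots,x_0$ confine $\xi_-$; writing $r_n(x)$ for the larger of the two radii, the inclusion $\ev([x_{-n},\dots,x_n])\subseteq B(\xi_-,r_n)\times B(\xi_+,r_n)$ together with $\ev_*\lambda\le C\L$ gives, for $\lambda$-a.e.\ $x$ and all large $n$,
\[
\lambda\big([x_{-n},\dots,x_n]\big)\;\le\;C\,\L\big(B(\xi_-,r_n)\times B(\xi_+,r_n)\big)\;\le\;C\,r_n(x)^{\,D-\epsilon}.
\]
Taking logarithms, dividing by $n$, and letting $n\to\infty$ (then $\epsilon\to0$): the left side produces $h(\sigma,\lambda)$ after the appropriate normalisation, by Shannon--McMillan--Breiman applied to the symmetric cylinders of the two-sided shift $\overline{\SS}_0$, while the decay of $r_n(x)$ is governed by the Birkhoff averages of $\Psi_d$ in the forward and backward directions, both of which integrate to $\int_{\overline{\SS}_0}\Psi_d\,d\lambda$ by $\sigma$-invariance of $\lambda$. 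Collecting terms yields $h(\sigma,\lambda)+\frac{D}{2}\int_{\overline{\SS}_0}\Psi_d\,d\lambda\ge0$.

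\textbf{Where the difficulty lies.} The genuinely delicate step is the bound $\ev_*\lambda\le C\L$ in the construction: Ces\`aro averaging redistributes mass on $\overline{\SS}_0$, and one must ensure that after pushing forward it is still dominated by a fixed multiple of $\L$. This is exactly where the fine structure of the Mineyev-flow coding from \cite{cantrell.tanaka.2} — the interplay of the shift with the $\G$-action and the cocompactness of that action on $\partial^2 \G$ — has to be exploited. By contrast, once $\lambda$ and the domination are in hand, the entropy inequality is a comparatively routine transcription of the mass-distribution principle, using only the metric/Gibbs identification of $\Psi_d$ from Lemma \ref{lem.holder}, Shannon--McMillan--Breiman, and the Birkhoff ergodic theorem.
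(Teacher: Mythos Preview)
The paper does not supply its own proof of this lemma: it is quoted as Lemma~5.2 of \cite{cantrell.tanaka.2} and used as a black box (in the proof of Lemma~\ref{lem.comp2}). There is therefore no in-paper argument to compare your proposal against.

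Your outline is nonetheless the natural strategy and matches the architecture of the cited reference: pass from a $\G$-invariant Radon measure $\L$ on $\partial^2\G$ to a flow-invariant measure on $\Fc_\k$, lift through the boundedly finite-to-one coding map $\Pi_0$ of Theorem~\ref{thm.enhanced} to the suspension, and project to the base $\overline{\SS}_0$ to obtain the $\sigma$-invariant $\lambda$; then read off the entropy inequality from the local-dimension hypothesis via Shannon--McMillan--Breiman together with the ergodic theorem applied to the Birkhoff sums of $\Psi_d$. You have correctly flagged the genuinely delicate point --- that the domination $\ev_\ast\lambda\le C\L$ must survive the averaging --- and correctly identified the ingredients that make it go through (the equivariance $\ev\circ\sigma^n=\ev_n(\cdot)^{-1}\cdot\ev$, cocompactness of the $\G$-action on $\partial^2\G$, and the uniform bound on fibre cardinalities). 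One hypothesis you are tacitly using but which is not stated in the lemma as recorded here is the $\G$-invariance of $\L$; without it your Ces\`aro argument cannot give domination by a fixed multiple of $\L$. This is harmless in practice, since in the only application (Lemma~\ref{lem.comp2}) the measure $\L_{s,t}$ is $\G$-invariant by construction, and the source lemma in \cite{cantrell.tanaka.2} is stated for $\G$-invariant $\L$.
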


\begin{lemma}\label{lem.Birkhoff}[Lemma 5.3 \cite{cantrell.tanaka.2}]
Take $d_\ast \in \Dc_\G$ (and $d$ is still our fixed strongly hyperbolic metric) and write $\mu_{a,b}$ for the measure obtain from Proposition \ref{prop.prodm}.
Suppose there exists a function $\P_{a,b} \in F_\theta(\overline{\SS}_0)$ on $(\overline{\SS}_0, \s)$ such that
\[
S_n\P_{a,b}(x)=\sum_{i=0}^{n-1}\P_{a,b}(\s^i(\o))= ad(o, \ev_n(x)) + b d_\ast(o,\ev_n(x)) +O(1) 
\]
 uniformly in $n \ge 1, x \in \overline{\SS}_0$,
and a $\s$-invariant Borel probability measure $\lambda$ on $(\overline{\SS}_0, \s)$ 
satisfying $\ev_\ast \lambda \le C \mu_{a,b}$.
Then for the local intersection number $\tau_{a,b}$ of $ad+bd_\ast$ relative to $d_\ast$ for $\m_{a,b}$-almost every $\x \in \partial \G$, we have that
\[
\int_{\overline{\SS}_0}\P_{a,b} \ d\lambda=\tau_{a,b} \int_{\SS_0}\P_d \ d\lambda,
\]
where $\P_d(\o)=\b_o(\ev_1(x), \ev(x))$ for $x \in \overline{\SS}_0$.
\end{lemma}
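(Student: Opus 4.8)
The plan is to reduce the claimed identity to a pointwise statement about Birkhoff ratios along $\lambda$-typical orbits, and then integrate. First I would record the elementary consequence of the two Birkhoff-sum approximations. Fix $x\in\overline{\SS}_0$. Since $\overline{\SS}_0$ is the two-sided shift over a non-trivial component of the Cannon coding, for every $n$ the word $(x_0,\dots,x_n)$ is a directed path in $\Gc$ and hence the associated path $(o,\ev_1(x),\dots,\ev_n(x))$ in $\Cay(\G,S)$ is a geodesic; thus $|\ev_n(x)|_S=n$ and, as $d$ is quasi-isometric to the word metric, $d(o,\ev_n(x))\ge L^{-1}n-C\to\infty$. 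Combining the hypothesised property of $\P_{a,b}$ with Lemma~\ref{lem.holder} for $\P_d$ gives
\[
\frac{S_n\P_{a,b}(x)}{S_n\P_d(x)}=\frac{a\,d(o,\ev_n(x))+b\,d_\ast(o,\ev_n(x))+O(1)}{d(o,\ev_n(x))+O(1)}=a+b\,\frac{d_\ast(o,\ev_n(x))}{d(o,\ev_n(x))}+o(1),
\]
the error being uniform in $x$ and vanishing because $d(o,\ev_n(x))\to\infty$ and $d_\ast(o,\ev_n(x))/d(o,\ev_n(x))$ stays bounded. The ray $k\mapsto\ev_k(x)$ is an $(L,C)$-quasi-geodesic for $d$ converging to $\ev(x)\in\partial\G$, so by the definition of $\tau_{\textnormal{inf}},\tau_{\textnormal{sup}}$ (which, by the Morse lemma, do not depend on the chosen quasi-geodesic) the $\liminf$ and $\limsup$ in $n$ of the displayed ratio equal $a+b\,\tau_{\textnormal{inf}}(\ev(x))$ and $a+b\,\tau_{\textnormal{sup}}(\ev(x))$. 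By Lemma~\ref{lem.intersect} we have $\tau_{\textnormal{inf}}(\xi)=\tau_{\textnormal{sup}}(\xi)$ equal to a fixed constant for $\mu_{a,b}$-almost every $\xi\in\partial\G$; and since $\ev_\ast\lambda\le C\mu_{a,b}$ forces $\mu_{a,b}$-null sets to have $\lambda$-null $\ev$-preimage, this applies to $\xi=\ev(x)$ for $\lambda$-almost every $x$. Hence $S_n\P_{a,b}(x)/S_n\P_d(x)$ converges, for $\lambda$-a.e.\ $x$, to the constant value of $a+b\,\tau(\cdot)$, which is precisely the intersection number $\tau_{a,b}$ of $ad+bd_\ast$ relative to $d$ appearing in the statement.

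To integrate, note that $\P_d$ and $\P_{a,b}$ are continuous on the compact space $\overline{\SS}_0$, hence bounded and in $L^1(\lambda)$. Since $\lambda$ is not assumed ergodic, I would apply Birkhoff's pointwise ergodic theorem in the form $\tfrac1n S_n\P_d\to E_\lambda[\P_d\mid\mathcal{I}]$ and $\tfrac1n S_n\P_{a,b}\to E_\lambda[\P_{a,b}\mid\mathcal{I}]$ $\lambda$-a.e., where $\mathcal{I}$ is the $\sigma$-invariant $\sigma$-algebra (equivalently, decompose $\lambda$ into ergodic components and argue on each). The bound $S_n\P_d(x)\ge L^{-1}n-C+O(1)$ forces $E_\lambda[\P_d\mid\mathcal{I}]\ge L^{-1}>0$ $\lambda$-a.e.\ (this positivity is also immediate from the positivity of $r_0=S_N\Psi_d$ in Theorem~\ref{thm.enhanced}), so dividing the two limits and using the previous paragraph gives $E_\lambda[\P_{a,b}\mid\mathcal{I}]=\tau_{a,b}\,E_\lambda[\P_d\mid\mathcal{I}]$ $\lambda$-a.e.; taking expectations over $\lambda$ yields $\int_{\overline{\SS}_0}\P_{a,b}\,d\lambda=\tau_{a,b}\int_{\overline{\SS}_0}\P_d\,d\lambda$, as desired.

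The main obstacle is the first step — not the calculation, but the transfer of the $\mu_{a,b}$-almost-sure convergence of the intersection ratio (Lemma~\ref{lem.intersect}) to a $\lambda$-almost-sure statement on $\overline{\SS}_0$. This rests entirely on the domination $\ev_\ast\lambda\le C\mu_{a,b}$ together with the identification of the symbolic ray $k\mapsto\ev_k(x)$ with an admissible quasi-geodesic for $d$ pointing at $\ev(x)$, so that the geometric quantities $\tau_{\textnormal{inf}},\tau_{\textnormal{sup}}$ can be read off symbolically. The non-ergodicity of $\lambda$ is a mild additional nuisance, forcing the conditional-expectation (or ergodic-decomposition) bookkeeping and requiring $\int\P_d\,d\lambda\neq0$ in order to divide; but since the coded paths are word-geodesics this positivity is automatic, and the integration step is then routine.
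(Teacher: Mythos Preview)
The paper does not actually prove this lemma; it is quoted verbatim as Lemma~5.3 of \cite{cantrell.tanaka.2} and used as a black box in the proof of Lemma~\ref{lem.comp2}. So there is no in-paper proof to compare against, and your proposal should be judged on its own merits.

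Your argument is correct and is precisely the natural one. The two ingredients are (i) the pointwise identification of the Birkhoff ratio $S_n\P_{a,b}/S_n\P_d$ with the geometric ratio $a+b\,d_\ast(o,\ev_n(x))/d(o,\ev_n(x))$ up to $o(1)$, which you justify cleanly via the quasi-geodesic property of $k\mapsto\ev_k(x)$ and the Morse lemma; and (ii) the transfer of the $\mu_{a,b}$-a.e.\ convergence from Lemma~\ref{lem.intersect} to a $\lambda$-a.e.\ statement via $\ev_\ast\lambda\le C\mu_{a,b}$. Your handling of the possibly non-ergodic $\lambda$ through conditional expectations (or ergodic decomposition), together with the observation that $E_\lambda[\P_d\mid\mathcal{I}]\ge L^{-1}>0$ so division is legitimate, is exactly right. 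You also quietly fix the typo in the statement (``relative to $d_\ast$'' should read ``relative to $d$'', as is clear from the way the lemma is invoked in the proof of Lemma~\ref{lem.comp2}).

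One small remark: the statement writes ``$\mu_{a,b}$ from Proposition~\ref{prop.prodm}'', which strictly speaking constructs $\Lambda_{a,b}$ on $\partial^2\G$; in the application (proof of Lemma~\ref{lem.comp2}) the domination is by $\mu_{a,b}\otimes\mu_{a,b}$ on $\partial^2\G$. Either reading is harmless for your argument, since absolute continuity of the marginal of $\ev_\ast\lambda$ with respect to $\mu_{a,b}$ on the forward boundary factor is all you use, and this follows in both cases.
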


\subsection{Growth tightness}
As seen in the previous sections, hyperbolic groups have strong combinatorial properties.
We will need to exploit an additional combinatorial property of hyperbolic groups to prove our results. This
property is known as growth quasi-tightness.

  For an element $w \in \G$ and a real number $\D \ge 0$,
  we say that a $S$-geodesic word \textit{$\D$-contains $w$} 
  if it contains a subword $h$ such that $h = h_1wh_2$
  for some $h_1, h_2 \in \Gamma$ with $|h_1|_S, |h_2|_S \le \D$.
  Let $Y_{w,\D}$ be the set of group elements $x \in \G$ such that $x$ is represented by some $S$-geodesic word which does not $\D$-contain $w$.
  
 \begin{definition}  
A group $\G$ is called growth quasi-tight if for any fixed generating set $S$
  there exists $\D_0> 0$ such that for any $w \in \Gamma$,
  \[
  \lim_{n\to \infty} \frac{\#(Y_{w,\D_0} \cap S_n)}{\#S_n}=0.
  \]
  Here $S_n$ is the $n$-sphere centered at the identity in $\Cay(\G,S)$.
  \end{definition}
  By   \cite[Theorem 3]{arzlys} hyperbolic groups are growth quasi-tight. An immediate consequence of this property is the following result that we will use repeatedly.
 
 \begin{lemma} \label{lem.gqtprod}
 Fix a word maximal component $\Cc$ in a Cannon coding for a hyperbolic group $\G$ and generating set $S$. Let $\G_\Cc$ denote the collection of group elements that correspond to a path in $\Cc$, i.e. the collection of elements in $\G$ that correspond to multiplying the edges labellings along a path in $\Cc$. Then there exists a finite set $F \subset \G$ such that $F \G_\Cc F = \G$.
 \end{lemma}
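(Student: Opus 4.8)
The plan is to deduce this from the growth quasi‑tightness of $\G$ by a combinatorial analysis of the Cannon graph. First I would record two elementary facts about $\G_\Cc$. Since every directed path in $\Gc$ labels a geodesic of $\Cay(\G,S)$, the set $\G_\Cc$ consists of geodesic elements, and the $\ev$‑image of a concatenation of two $\Cc$‑paths is again a geodesic of length the sum of the two lengths. Second, because $\Cc$ is word maximal the transition matrix of $\Cc$ has spectral radius equal to the exponential growth rate $e^{v}$ of $(\G,|\cdot|_S)$; by Perron--Frobenius the number of loops of length $n$ based at a fixed vertex $u$ of $\Cc$ grows like $e^{vn}$ along the residue class of the period of $\Cc$, and distinct such loops label distinct (geodesic) group elements. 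Hence $\limsup_{n\to\infty}\frac1n\log\#(\G_\Cc\cap S_n)=v$, so $\G_\Cc$ is, up to a subexponential factor, as large as the whole group.

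Next I would feed a suitable loop word into growth quasi‑tightness. Fix $u\in V(\Cc)$, a loop $\ell$ at $u$ of positive length, and put $w_0=\ev(\ell)\in\G_\Cc$; let $\Delta_0=\Delta_0(S)$ be the constant furnished by growth quasi‑tightness. For all large $n$, all but a vanishing proportion of $x\in S_n$ have the property that every $S$‑geodesic word for $x$ --- in particular the word read off the Cannon‑automaton path $\pi_x$ --- $\Delta_0$‑contains $w_0$. Using that $\Cc$ is (strongly) connected, so that $\Cc$‑paths are flexible --- any two vertices of $\Cc$ are joined inside $\Cc$ by a path of length at most $\#V$ --- together with the determinism of the automaton and the fact that the components of $\Gc$ are linearly ordered by reachability, one analyses how $\pi_x$ reads a geodesic subword equal to $h_1 w_0 h_2$ with $|h_1|_S,|h_2|_S\le\Delta_0$ and extracts a factorisation $x=f_1 y f_2$ with $y\in\G_\Cc$ and $f_1,f_2$ drawn from a fixed finite set $F_1$. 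Thus $A:=F_1\G_\Cc F_1$ has density tending to $1$ in the spheres $S_n$.

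Finally I would upgrade "density $\to1$" to "$=\G$". Here one uses that $\G_\Cc$ is coarsely prefix closed: initial segments of a $\Cc$‑path are again $\Cc$‑paths, so their $\ev$‑images are geodesic prefixes lying in $\G_\Cc$. A coarsely prefix‑closed subset of a non‑elementary hyperbolic group whose density in the spheres tends to $1$ cannot omit an entire cone $\{w : u\text{ is a geodesic prefix of }w\}$, since such a cone carries a definite proportion ($\asymp e^{-v|u|}$) of every large sphere; combining this with the density estimate one concludes that $\G_\Cc$ is coarsely dense in $(\G,|\cdot|_S)$, i.e. $\G=B_R\,\G_\Cc$ for some $R$, and after enlarging $F_1$ by $B_R$ one obtains $F\G_\Cc F=\G$.

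The hard part will be the middle step: converting the soft information "some geodesic subword of $\pi_x$ is $\Delta_0$‑close to the $\Cc$‑loop word $w_0$" into the structural statement "$x=f_1 y f_2$ with $y\in\G_\Cc$ and $f_1,f_2$ bounded". Growth quasi‑tightness only locates $w_0$ somewhere along a geodesic for $x$, whereas the desired factorisation requires the $\Cc$‑part of $x$ to be flanked by bounded pieces; reconciling these is precisely where the word maximality of $\Cc$ and the fine structure of the Cannon automaton must be used. An alternative, possibly cleaner, route to the same conclusion is a contradiction argument: were $x$ at two‑sided $|\cdot|_S$‑distance greater than $m$ from $\G_\Cc$ for every $m$, one would manufacture from $x$ a family of group elements whose geodesics all avoid $w_0$, contradicting the growth estimate underlying growth quasi‑tightness.
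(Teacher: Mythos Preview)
Your proposal uses the right ingredients (word maximality gives $\G_\Cc$ full exponential growth; growth quasi-tightness kills exceptional sets) but you apply growth quasi-tightness in the wrong direction, and this is why your step~4 and step~5 remain unresolved.

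You fix a $\Cc$-loop word $w_0$ and try to show that a \emph{generic} $x\in\G$ factors as $f_1 y f_2$ with $y\in\G_\Cc$ and bounded $f_i$. The obstruction you yourself flag is real and does not go away: even if every $S$-geodesic for $x$ $\Delta_0$-contains $w_0$, the occurrence sits at an uncontrolled position, so the prefix and suffix flanking it can be arbitrarily long. Determinism of the automaton does not help, because a subpath of $\pi_x$ reading the word $h_1 w_0 h_2$ need not lie in $\Cc$ at all (different states can read the same label word). Your upgrade step~5 also fails as written: $F_1\G_\Cc F_1$ is not prefix closed, so missing one element does not force missing a cone.

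The argument in the cited references (Gou\"ezel--Math\'eus--Maucourant, and \S4.5 of Cantrell--Tanaka) reverses the roles. Fix an \emph{arbitrary} $g\in\G$ and apply growth quasi-tightness with $w=g$: the set $Y_{g,\Delta_0}\cap S_n$ has density tending to $0$. On the other hand, word maximality of $\Cc$ gives $\liminf_n \#(\G_\Cc\cap S_n)/\#S_n>0$ (paths in $\Cc$ from a fixed vertex inject into $\G$ via $\ev$, by prepending a fixed $\ast\!\to\!\Cc$ path and using the bijection from $\ast$-paths to $\G$). Hence for large $n$ there exists $y\in(\G_\Cc\cap S_n)\setminus Y_{g,\Delta_0}$. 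Every geodesic word for $y$ then $\Delta_0$-contains $g$; in particular the geodesic word coming from $y$'s $\Cc$-path does, so some subword equals $h_1 g h_2$ with $|h_i|_S\le\Delta_0$. But a subpath of a $\Cc$-path is a $\Cc$-path, so $h_1 g h_2\in\G_\Cc$, i.e.\ $g\in B_{\Delta_0}\,\G_\Cc\,B_{\Delta_0}$. Since $g$ was arbitrary, $F=B_{\Delta_0}$ works.

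The single idea you are missing is this swap: let the arbitrary target element play the role of $w$ in growth quasi-tightness, and let the positive density of $\G_\Cc$ supply the long geodesic that must contain it. Then the bounded-flank factorisation is immediate from the trivial fact that subpaths of $\Cc$-paths stay in $\Cc$, and no density-to-equality upgrade is needed.
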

\begin{proof}
This was observed in the proof of Lemma 4.6 in \cite{GMM2018}. See Section 4.5 of \cite{cantrell.tanaka.1} for more details.
\end{proof}


\section{Structural results for the Cannon coding} \label{section.codingstructure}
The aim of this section is to use the Mineyev topological flow and combinatorial properties of hyperbolic groups to gain a better understanding of the structural properties of the Cannon coding. Recall that a component of a Cannon coding for $\G,S$ is word maximal if it is maximal for the constant function with value $1$. 
\subsection{Study of the components}
The following result is key to our work.
\begin{proposition} \label{prop.main}
Let $\G$ be a hyperbolic group equipped with finite symmetric generating set $S$ and fix a Cannon coding $\Sigma$ for $\G, S$. Suppose that $d \in \Dc_\G$ is a strongly hyperbolic metric with exponential growth rate $v_d$ and that $\Psi_d$ is the H\"older potential on $\Sigma$ corresponding to $d$.  Then, the $-v_d \Psi_d$ maximal components are precisely the word maximal components.
\end{proposition}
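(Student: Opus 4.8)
Write $\Lambda_\Cc=\ev(\SS_\Cc)\subseteq\partial\G$ for the limit set of a component $\Cc$, and let $\mu:=\mu_{v_d,0}$ be the Patterson--Sullivan measure of $d$ (take $d_\ast$ a word metric and $(a,b)=(v_d,0)$ in Proposition~\ref{prop.qcm}), so that $\mu(\Oc(x,R))\asymp e^{-v_d d(o,x)}$ by \eqref{eq.shadow} and $\mu$ is $\G$-ergodic. The argument rests on the following dictionary, valid for either of the metrics $\rho\in\{d,|\cdot|_S\}$: \emph{if $F\G_\Cc F=\G$ for a finite set $F$, then $\Cc$ is maximal for the potential attached to $\rho$} --- that is, $-v_d\Psi_d$-maximal when $\rho=d$, and word maximal when $\rho=|\cdot|_S$. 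Indeed, left-invariance of $\rho$ and $F\G_\Cc F=\G$ give $\#\{x\in\G:\rho(o,x)<T\}\le\#F^{2}\cdot\#\{g\in\G_\Cc:\rho(o,g)<T+O(1)\}$, so $\G_\Cc$ has full exponential growth rate $v_\rho$ for $\rho$; on the other hand the map $s\mapsto\mathrm{P}_\Cc(-s\Psi_\rho)$ --- where $\Psi_\rho$ is the H\"older potential $\Psi_d$ of Lemma~\ref{lem.holder} when $\rho=d$, and the constant $1$ when $\rho=|\cdot|_S$ (note $S_n 1=n=|\ev_n(x)|_S$ along paths in $\Gc$) --- is strictly decreasing, its derivative being $-\int\Psi_\rho\,d\m>0$ for the equilibrium state $\m$, since $S_n\Psi_\rho(x)=\rho(o,\ev_n(x))+O(1)$ grows linearly; thus it has a unique zero $s_\Cc$, and the Gibbs estimate \eqref{Eq:Gibbs} gives $\sum_{g\in\G_\Cc}e^{-s\rho(o,g)}\le C\sum_{n}e^{n\mathrm{P}_\Cc(-s\Psi_\rho)}$, whence the growth rate $v_\rho$ of $\G_\Cc$ is at most $s_\Cc$. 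As $\mathrm{P}_\Cc(-v_\rho\Psi_\rho)\le\mathrm{P}(-v_\rho\Psi_\rho)=0$ forces $s_\Cc\le v_\rho$, we get $s_\Cc=v_\rho$, i.e.\ $\mathrm{P}_\Cc(-v_\rho\Psi_\rho)=0$, i.e.\ maximality.

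Given the dictionary, ``word maximal $\Rightarrow$ $-v_d\Psi_d$-maximal'' is immediate: Lemma~\ref{lem.gqtprod} supplies a finite $F$ with $F\G_\Cc F=\G$, and the dictionary with $\rho=d$ applies. It remains to prove the reverse, and for this it suffices (dictionary with $\rho=|\cdot|_S$) to show that a $-v_d\Psi_d$-maximal $\Cc$ also satisfies $F\G_\Cc F=\G$.

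So suppose $\mathrm{P}_\Cc(-v_d\Psi_d)=0$. The Gibbs property \eqref{Eq:Gibbs} of the equilibrium state $\m_\Cc$ of $-v_d\Psi_d$ reads $\m_\Cc[x_0,\dots,x_{n-1}]\asymp e^{-v_d d(o,\ev_n(x))}$; comparing each cylinder with the shadow of its endpoint via \eqref{eq.shadow} yields $\ev_\ast\m_\Cc\le C\mu$ on $\partial\G$, so $\mu(\Lambda_\Cc)\ge C^{-1}>0$ (while a non-maximal $\Cc$ has $s_\Cc<v_d$, and then $\Lambda_\Cc\subseteq\bigcup_{g\in\G_\Cc,\,|g|_S=n}\Oc(g,R)$ gives $\mu(\Lambda_\Cc)\le C\sum_{g\in\G_\Cc,\,|g|_S=n}e^{-v_d d(o,g)}\to 0$). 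The crucial step is to upgrade $\mu(\Lambda_\Cc)>0$ to: \emph{$\Lambda_\Cc$ has non-empty interior}. Since $\Lambda_\Cc=\overline{\G_\Cc}\cap\partial\G$, and the translates $\ev_n(y)^{-1}\Lambda_\Cc$ along finite paths $y$ in $\Cc$ reproduce, up to a uniformly bounded ``error'' translation, the limit sets $\Lambda_{\Cc,v}$ of $\Cc$ at its finitely many vertices $v$, one blows up the coding at a $\mu$-density point of $\Lambda_\Cc$ using the bounded distortion of the (uniformly conical) boundary action, the $\s$-invariant measures on $\overline{\SS}_\Cc$ supplied by Lemma~\ref{lem.measub} for the two-sided subshift over the maximal component $\Cc$ from Theorem~\ref{thm.enhanced}, and the ergodicity of $\mu$, to conclude that some $\Lambda_{\Cc,v}$ contains a ball --- equivalently, that a $\mu$-null open subset of $\partial\G$ must be empty, $\mu$ having full support. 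Then $\G\cdot\operatorname{int}\Lambda_\Cc$ is a non-empty open $\G$-invariant subset of $\partial\G$, hence all of $\partial\G$ by minimality of the boundary action, so $\partial\G=\bigcup_{f\in F_0}f\Lambda_\Cc$ for a finite $F_0$. Finally, for $x\in\G$ extend a geodesic from $o$ to $x$ to a boundary point $\x$, pick $f\in F_0$ with $f^{-1}\x\in\Lambda_\Cc=\overline{\G_\Cc}\cap\partial\G$, and use the Morse lemma to find $g\in\G_\Cc$ on a $\Cc$-path towards $f^{-1}\x$ with $|g^{-1}f^{-1}x|_S=O(1)$; this places $x$ in $F_0\,\G_\Cc\,B$ with $B=\{h\in\G:|h|_S=O(1)\}$ finite, so $F\G_\Cc F=\G$, and $\Cc$ is word maximal by the dictionary.

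The heart of the matter --- and the only genuinely hard point --- is the passage from $\mu(\Lambda_\Cc)>0$ to non-empty interior of $\Lambda_\Cc$. A quasi-isometric comparison of $d$ with $|\cdot|_S$ is too lossy (the distortion constant survives into the growth-rate estimates), and no soft measure-theoretic argument is available, since the Patterson--Sullivan measures of $d$ and of $|\cdot|_S$ may be mutually singular; it is precisely to bridge this gap that one calls on the Mineyev-flow coding of Theorem~\ref{thm.enhanced} (which identifies the correct subshift over a maximal component) together with the ergodicity of the boundary measures from Propositions~\ref{prop.qcm} and \ref{prop.prodm} and Lemmas~\ref{lem.measub}--\ref{lem.Birkhoff}.
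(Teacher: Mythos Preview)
Your forward direction (word maximal $\Rightarrow$ $-v_d\Psi_d$-maximal) via the ``dictionary'' is correct and is essentially the content of the paper's Lemma~\ref{lem.comp1}: both arguments hinge on $F\G_\Cc F=\G$ for a word maximal $\Cc$ (Lemma~\ref{lem.gqtprod}) together with a Poincar\'e-series comparison.

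The reverse direction, however, has a genuine gap. You correctly obtain $\mu(\Lambda_\Cc)>0$ from the Gibbs/shadow comparison, but the step ``upgrade to non-empty interior of $\Lambda_\Cc$'' is not justified. The density-point heuristic you sketch runs into the mismatch between the word-metric cylinders (along which you would blow up) and the $d$-quasi-metric balls (which control $\mu$); there is no uniform bounded-distortion statement linking the two at this level of generality. More importantly, the tools you invoke at that point --- Theorem~\ref{thm.enhanced} and Lemmas~\ref{lem.measub}--\ref{lem.Birkhoff} --- produce $\sigma$-invariant measures on $\overline{\SS}_\Cc$ together with entropy/integral inequalities; they are variational, not topological, and say nothing about $\operatorname{int}\Lambda_\Cc$. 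So the route through $F\G_\Cc F=\G$ is left incomplete precisely at its critical step.

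The paper handles this direction differently and avoids $F\G_\Cc F=\G$ altogether. It proves (Lemma~\ref{lem.comp2}) that for any $-v_d\Psi_d$-maximal component $\Cc_0$ one has $\mathrm{P}_{\Cc_0}(-t\Psi_d)=\theta_{d/S}(t)$ for \emph{every} $t\in\R$. The argument is purely variational: for $s=\theta_{d/S}(t)$ the Patterson--Sullivan construction yields an invariant Radon measure $\Lambda_{s,t}$ on $\partial^2\G$ (Proposition~\ref{prop.prodm}), and Lemma~\ref{lem.measub}, applied with $\overline{\SS}_0=\overline{\SS}_{\Cc_0}$ as Theorem~\ref{thm.enhanced} permits, produces $\lambda_{s,t}\in\Mcc(\sigma,\overline{\SS}_{\Cc_0})$ with $h(\sigma,\lambda_{s,t})+\int(-s-t\Psi_d)\,d\lambda_{s,t}\ge 0$ (the integral identification coming from Lemma~\ref{lem.Birkhoff} and Lemma~\ref{lem.intersect}). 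Since always $\mathrm{P}_{\Cc_0}(-s-t\Psi_d)\le\mathrm{P}(-s-t\Psi_d)=0$, the variational principle forces equality, whence $\mathrm{P}_{\Cc_0}(-t\Psi_d)=s=\theta_{d/S}(t)$. Evaluating at $t=0$ immediately gives word-maximality of $\Cc_0$. In short, the Mineyev-flow machinery is used to manufacture a \emph{lower} bound on pressure over $\Cc_0$, not to extract topological information about $\Lambda_{\Cc_0}$; this is both simpler than, and logically independent of, the non-empty-interior claim you attempt.
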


We will break the proof into two lemmas. 
To prove these results it will be convenient to introduce a new object: Manhattan curves. These are curves that encode information about pairs of metrics in $\Dc_\G$. They were introduced by Burger \cite{BurgerManhattan} in the setting of cocompact actions on rank one symmetric spaces and have proved a useful tool in obtaining rigidity results for metrics. Since their conception, Manhattan curves have been defined an studied in a variety of settings. In the current setting, Cantrell and Tanaka studied these curves in \cite{cantrell.tanaka.1} and \cite{cantrell.tanaka.2}.

\begin{definition}
Given $d, d_\ast \in \Dc_\G$ the Manhattan curve $\theta_{d/d_\ast} : \R \to \R$ is defined as follows: for each $a\in \R$, $\theta_{d/d_\ast}(a)$ is the abscissa of convergence of 
\[
\sum_{x\in\G} e^{-ad(o,x) - b d_\ast(o,x)}
\]
as $a$ remains fixed and $b$ varies.
\end{definition}

We will mainly be interested in the case that $d_\ast$ is a word metric. When this is the case and $d_\ast$ is the word metric associated to a finite generating set $S$ we will denote the Manhattan curve corresponding to $d, d_\ast$ by $\theta_{d/S}$.
Suppose we have fixed a finite generating set $S$ and a Cannon coding $\Sigma$ for $\G, S$. Given a strongly hyperbolic metric $d \in \Dc_\G$ with corresponding H\"older potential $\Psi_d$ on $\SS$ it follows from the work in Section 4.4 of \cite{cantrell.tanaka.1} that $\theta_{d/S}(t) = \max_{\Cc} \text{P}_\Cc(-t\Psi) = \text{P}(-t\Psi_d)$ where this maximum is taken over all components $\Cc$ coming from the fixed Cannon coding.

\begin{lemma} \label{lem.comp1}
Consider the Manhattan curve $\theta_{d/S}$ for a strongly hyperbolic metric $d \in \Dc_\G$ and word metric associated to $S$. Then we can realise $\theta_{d/S}$ as
\[
\theta_{d/S}(t) = \textnormal{P}_\mathcal{C}(-t\Psi_d) \ \ \text{ for all } \ t\in\R
\] 
where $\mathcal{C}$ is any fixed word maximal component.
\end{lemma}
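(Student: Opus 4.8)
The inequality $\theta_{d/S}(t) \ge \mathrm{P}_{\mathcal{C}}(-t\Psi_d)$ holds for \emph{every} component $\mathcal{C}$ of the coding, since, as recalled above (Section~4.4 of \cite{cantrell.tanaka.1}), $\theta_{d/S}(t) = \max_{\mathcal{C}'}\mathrm{P}_{\mathcal{C}'}(-t\Psi_d)$. Thus the content of the lemma is the reverse inequality $\theta_{d/S}(t) \le \mathrm{P}_{\mathcal{C}}(-t\Psi_d)$ when $\mathcal{C}$ is word maximal, and the plan is to deduce this from growth quasi-tightness, which is exactly what forces $\G_{\mathcal{C}}$ to witness the full growth of $\G$.

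First I would invoke Lemma~\ref{lem.gqtprod} to fix a finite set $F \subset \G$ with $F\G_{\mathcal{C}}F = \G$, and set $D = 2\max_{f \in F} d(o,f)$ and $D' = 2\max_{f\in F}|f|_S$. Writing an arbitrary $x \in \G$ as $x = f_1 y f_2$ with $f_1,f_2 \in F$ and $y \in \G_{\mathcal{C}}$, left-invariance of $d$ and the triangle inequality give $|d(o,x)-d(o,y)| \le D$ and $\bigl|\, |x|_S - |y|_S \,\bigr| \le D'$, while each $y \in \G_{\mathcal{C}}$ arises from at most $|F|^2$ such elements $x$. Hence, for every fixed $t,b \in \R$,
\[
\sum_{x\in\G} e^{-td(o,x)-b|x|_S} \ \le \ |F|^2\, e^{|t|D + |b|D'} \sum_{y \in \G_{\mathcal{C}}} e^{-td(o,y)-b|y|_S}.
\]
Since $\theta_{d/S}(t)$ is by definition the abscissa of convergence in $b$ of the left-hand side, it is at most the abscissa of convergence in $b$ of the series over $\G_{\mathcal{C}}$.

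Next I would bound that latter abscissa by $\mathrm{P}_{\mathcal{C}}(-t\Psi_d)$. Choosing, for each $y \in \G_{\mathcal{C}}$, a directed path in $\mathcal{C}$ whose $\ev$-image is $y$ embeds $\G_{\mathcal{C}}$ into the set of finite directed paths in $\mathcal{C}$; since every directed path in the coding is a geodesic in $\Cay(\G,S)$, a path mapping to $y$ has length $|y|_S$, and by Lemma~\ref{lem.holder} one may replace $d(o,y)$ by $S_{|y|_S}\Psi_d(\widehat y)$ up to a uniform error, where $\widehat y \in \SS_{\mathcal{C}}$ is any infinite admissible extension of the path (which exists because $\mathcal{C}$, being a connected component, is transitive). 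Grouping the resulting sum by the path length $n$ and applying the Gibbs bound \eqref{Eq:Gibbs} for the equilibrium state of $-t\Psi_d$ on $\overline{\SS}_{\mathcal{C}}$ gives $\sum_{p} e^{-tS_n\Psi_d} \asymp e^{n\mathrm{P}_{\mathcal{C}}(-t\Psi_d)}$, the sum being over length-$n$ paths $p$ in $\mathcal{C}$. Consequently the series over $\G_{\mathcal{C}}$ is dominated by a constant multiple of $\sum_{n \ge 0} e^{n(\mathrm{P}_{\mathcal{C}}(-t\Psi_d) - b)}$, which converges precisely when $b > \mathrm{P}_{\mathcal{C}}(-t\Psi_d)$. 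Combining this with the previous step yields $\theta_{d/S}(t) \le \mathrm{P}_{\mathcal{C}}(-t\Psi_d)$, and together with the first paragraph the asserted equality follows.

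I do not expect a serious obstacle here: the argument is essentially an assembly of cited facts, the genuinely decisive one being growth quasi-tightness (Lemma~\ref{lem.gqtprod}), which ensures $\G_{\mathcal{C}}$ carries the full exponential growth of $\G$ simultaneously for $d$ and for $|\cdot|_S$. The steps that will require a little care are the bookkeeping ones: confirming that after the augmentation of the coding a word maximal component consists of the original (geodesic) paths (the augmented vertex forms its own component of growth rate $1$, hence is not word maximal), that every finite path in $\mathcal{C}$ extends to an infinite admissible sequence, and that the passage from sums over finite paths to the pressure $\mathrm{P}_{\mathcal{C}}(-t\Psi_d)$ is legitimate --- this last point being the standard consequence of the Gibbs property in Proposition~\ref{prop.vp} combined with the uniform comparison $S_n\Psi_d = d(o,\ev_n) + O(1)$ of Lemma~\ref{lem.holder}.
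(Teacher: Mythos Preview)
Your argument is correct and follows essentially the same strategy as the paper's: the decisive input in both is growth quasi-tightness, used to show that the two-variable Poincar\'e series over $\G$ and over a word maximal component have the same abscissa of convergence, after which one identifies that abscissa with $\mathrm{P}_{\mathcal{C}}(-t\Psi_d)$ via thermodynamic formalism. The minor technical differences are that the paper sums over \emph{loops} in $\mathcal{C}$ (periodic orbits) and appeals to the periodic-orbit asymptotic $\sum_{\sigma^n x=x}e^{-tS_n\Psi_d(x)}\sim e^{n\mathrm{P}_{\mathcal{C}}(-t\Psi_d)}$, whereas you sum over all finite paths and invoke the Gibbs property, and you obtain the inequality $\theta_{d/S}(t)\ge \mathrm{P}_{\mathcal{C}}(-t\Psi_d)$ for free from the identity $\theta_{d/S}=\max_{\mathcal{C}'}\mathrm{P}_{\mathcal{C}'}$ rather than by a second series comparison; both routes are equally valid.
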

To prove this lemma we use a combinatorial argument that exploits the fact that hyperbolic groups are growth quasi-tight. Our next lemma is the following.

\begin{lemma}\label{lem.comp2}
Consider the Manhattan curve $\theta_{d/S}$ for a strongly hyperbolic metric $d \in \Dc_\G$ and word metric associated to $S$. Then we can realise $\theta_{d/S}$ as
\[
\theta_{d/S}(t) = \textnormal{P}_{\mathcal{C}_0}(-t\Psi_d) \ \ \text{ for all } \ t\in\R
\] 
where $\mathcal{C}_0$ is any fixed component that is maximal for $-v_d\Psi_d$.
\end{lemma}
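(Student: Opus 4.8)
The plan is to derive the lemma from Lemma~\ref{lem.comp1} together with the flow coding of Theorem~\ref{thm.enhanced}. Write $\textnormal{P}(-t\Psi_d)=\max_{\Cc}\textnormal{P}_\Cc(-t\Psi_d)$ for the pressure over the whole Cannon coding, so that $\theta_{d/S}(t)=\textnormal{P}(-t\Psi_d)$ and $\textnormal{P}_{\mathcal{C}_0}(-t\Psi_d)\le\theta_{d/S}(t)$ automatically; only the reverse inequality needs proof. First note that a word maximal component $\mathcal{C}_w$ is automatically $-v_d\Psi_d$ maximal: by Lemma~\ref{lem.comp1} we have $\textnormal{P}_{\mathcal{C}_w}(-t\Psi_d)=\theta_{d/S}(t)$ for all $t$, and at $t=v_d$ this gives $\textnormal{P}_{\mathcal{C}_w}(-v_d\Psi_d)=\theta_{d/S}(v_d)=\textnormal{P}(-v_d\Psi_d)=0$, using Lemma~\ref{lem.holder}. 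Hence it suffices to prove that for \emph{every} $-v_d\Psi_d$ maximal component $\Cc$,
\begin{equation}\label{eq.comp2key}
\textnormal{P}_\Cc(-t\Psi_d)=\lim_{n\to\infty}\frac1n\log\sum_{[w]\in\conj':\,\ell_S[w]=n} e^{-t\,\ell_d[w]}\qquad\text{for all }t\in\R,
\end{equation}
where $\ell_S[w]$ denotes the translation length of $[w]$ in the word metric $|\cdot|_S$. Indeed, the right-hand side of~\eqref{eq.comp2key} does not depend on $\Cc$; applying it to $\mathcal{C}_w$ and using Lemma~\ref{lem.comp1} identifies it with $\theta_{d/S}(t)$, and then~\eqref{eq.comp2key} for $\Cc=\mathcal{C}_0$ gives $\textnormal{P}_{\mathcal{C}_0}(-t\Psi_d)=\theta_{d/S}(t)$ for all $t$ (and shows, as a byproduct, that $\mathcal{C}_0$ is word maximal).

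To prove~\eqref{eq.comp2key} I would combine the periodic-point formula $\textnormal{P}_\Cc(-t\Psi_d)=\lim_n\frac1n\log\sum_{\sigma^n x=x}e^{-tS_n\Psi_d(x)}$ with a dictionary between periodic points of $\overline{\SS}_\Cc$ and non-torsion conjugacy classes. A point $x$ with $\sigma^n x=x$ traces a length-$n$ loop in $\Cc$; because the Cannon labelling is built by left translation, $\ev_{kn}(x)=w^k$ for all $k\ge 0$, where $w=\ev_n(x)$. Prepending a fixed directed path from $\ast$ to the base vertex of the loop produces, for every $k$, a directed path from $\ast$ and hence a geodesic in $\Cay(\G,S)$, so $|p\,w^k|_S=|p|_S+kn$ for a fixed $p\in\G$; letting $k\to\infty$ gives $\ell_S[w]=n$. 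Likewise Lemma~\ref{lem.holder} gives $S_{kn}\Psi_d(x)=d(o,w^k)+O(1)$, and $S_{kn}\Psi_d(x)=k\,S_n\Psi_d(x)$ since $x$ is periodic, so $S_n\Psi_d(x)=\ell_d[w]$. Thus $\sum_{\sigma^n x=x}e^{-tS_n\Psi_d(x)}$ equals, up to the usual cyclic-shift and non-primitive counting factors (which do not affect exponential growth rates), a sum of $e^{-t\,\ell_d[w]}$ over the $[w]\in\conj'$ with $\ell_S[w]=n$ that are represented by a loop in $\Cc$, each weighted by the number of representing loops. To finish, one needs that every $[w]\in\conj'$ is represented by at least one, and at most uniformly boundedly many, loops in $\Cc$. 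Here Theorem~\ref{thm.enhanced} enters: since $\Cc$ is $-v_d\Psi_d$ maximal, the suspension flow $\textnormal{Sus}(\overline{\SS}_\Cc,r_0)$ with $r_0=S_N\Psi_d$ admits a continuous, flow-equivariant surjection $\Pi_0$ onto the Mineyev flow $\Fc_\k$ with uniformly bounded fibres. A flow-equivariant surjection with bounded fibres carries every periodic orbit of $\Fc_\k$ back to periodic orbit(s) upstairs, with preimage consisting of boundedly many orbits whose periods are bounded integer multiples of the original; and the periodic orbits of $\Fc_\k$ are indexed, with flow period equal to the $d$-translation length, by $\conj'$, each orbit determining its conjugacy class and hence also its $\ell_S$-value. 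Translating through the dictionary, this gives exactly the required existence and uniform boundedness of representing loops, so a uniformly bounded multiplicity (independent of $[w]$ and of $\Cc$) enters both sides of~\eqref{eq.comp2key}, which therefore holds.

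The step I expect to be the main obstacle is the last one: making the chain of correspondences --- loops in a $-v_d\Psi_d$ maximal component $\Cc$, periodic orbits of the coded suspension flow $\textnormal{Sus}(\overline{\SS}_\Cc,r_0)$, periodic orbits of $\Fc_\k$, and non-torsion conjugacy classes of $\G$ --- simultaneously precise and uniform, in particular controlling non-primitive loops and orbits and the fibre-multiplicity of $\Pi_0$ so that these really are absorbed into the exponential rate. (An alternative route would be to push the equilibrium state of $-t\Psi_d$ on $\mathcal{C}_w$ forward to $\partial^2\G$ and pull it back to $\overline{\SS}_{\mathcal{C}_0}$ using Lemmas~\ref{lem.measub}--\ref{lem.Birkhoff}, comparing the resulting pressure-theoretic quantities via the associated local intersection number; I expect the periodic-orbit argument above to be the cleaner one.)
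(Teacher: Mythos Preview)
Your approach is genuinely different from the paper's, and in fact your ``alternative route'' is essentially what the paper does. The paper fixes $\Cc_0$, sets $\Psi_{s,t}=-s-t\Psi_d$, and for $s=\theta_{d/S}(t)$ takes the Patterson--Sullivan type measure $\mu_{s,t}$ and the associated $\Gamma$-invariant measure $\Lambda_{s,t}$ on $\partial^2\G$ (Propositions~\ref{prop.qcm} and~\ref{prop.prodm}). It then pulls $\Lambda_{s,t}$ back to a $\sigma$-invariant probability measure $\lambda_{s,t}$ on $\overline{\Sigma}_0$ via Lemma~\ref{lem.measub}, uses Lemma~\ref{lem.Birkhoff} to identify $\int\Psi_{s,t}\,d\lambda_{s,t}$ with a multiple of $\int\Psi_d\,d\lambda_{s,t}$, and invokes the entropy inequality in Lemma~\ref{lem.measub} to conclude $h(\sigma,\lambda_{s,t})+\int\Psi_{s,t}\,d\lambda_{s,t}\ge 0$, hence $\textnormal{P}_{\Cc_0}(\Psi_{s,t})\ge 0$ by the variational principle. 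The reverse inequality is automatic, so $\textnormal{P}_{\Cc_0}(-t\Psi_d)=\theta_{d/S}(t)$. No periodic-orbit dictionary is needed.

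Your primary route via periodic orbits has a real gap, precisely at the step you flagged. Even granting that $\Pi_0$ respects the labelling (so that a loop $x$ of $\sigma$-period $m$ maps to the Mineyev orbit of $[\ev_m(x)]$; this is a property of the explicit construction in \cite{cantrell.tanaka.2}, not a formal consequence of Theorem~\ref{thm.enhanced} as stated), the bounded-fibre argument only yields: for each $[w]\in\conj'$ there is $k\le N$ and a loop of length $k\,\ell_S[w]$ with $\ev$-class $[w^k]$. This is not enough for the inequality $Q(t)\le\textnormal{P}_\Cc(-t\Psi_d)$ in your~\eqref{eq.comp2key}. Indeed, grouping by the value of $k$ you are led to bound
\[
\sum_{[w]:\,\ell_S[w]=n,\;k_w=k} e^{-t\ell_d[w]}
\;\le\; C\sum_{\text{loops of length }kn} e^{-(t/k)\,S_{kn}\Psi_d},
\]
whose exponential rate is $k\,\textnormal{P}_\Cc(-(t/k)\Psi_d)$. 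By convexity of $s\mapsto\textnormal{P}_\Cc(-s\Psi_d)$ and $\textnormal{P}_\Cc(0)>0$, one has $k\,\textnormal{P}_\Cc(-(t/k)\Psi_d)\ge (k-1)\textnormal{P}_\Cc(0)+\textnormal{P}_\Cc(-t\Psi_d)>\textnormal{P}_\Cc(-t\Psi_d)$ for $k>1$, so the bound goes the wrong way and the ``absorbed into the exponential rate'' claim fails. (Note also that the paper's later Proposition~\ref{prop.loops} and Corollary~\ref{cor.loops}, which establish a version of this dictionary for \emph{word} maximal components via growth quasi-tightness, likewise only produce loops representing a fixed power $[g^{\pm M}]$; the stronger statement you need is not available.) The measure-theoretic argument sidesteps all of this: it transports a single measure rather than a family of periodic orbits, and the variational principle converts the transported measure directly into a pressure lower bound on $\Cc_0$.
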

To prove this we use the Mineyev topological flow and a characterisation of flow invariant measures. Assuming these lemmas we can prove Proposition \ref{prop.main}.

\begin{proof} [proof of Proposition \ref{prop.main}]
Consider the Manhattan curve $\theta_{d/S}$ for the metric $d$ and word metric associated to $S$.  We know that this Manhattan curve is obtained as
\[
\theta_{d/S}(t) = \text{P}(-t\Psi_d) = \max_{C} \text{P}_C(-t\Psi_d)
\]
where the maximum is taken over all components. However, by our previous two lemmas, $\theta_{d/S}$ is also obtained as
\[
 \max_{C} \text{P}_C(-t\Psi_d)= \theta_{d/S}(t) = \text{P}_{\mathcal{C}_0}(-t \Psi_d) = \text{P}_\mathcal{C}(-t\Psi_d)
\]
where $\mathcal{C}_0$ is any maximal component for $-v_d\Psi_d$ and $\mathcal{C}$ is any word maximal component. In particular, each word maximal component must be a $-v_d\Psi_d$ maximal component and vice versa.
\end{proof}

We now need to complete the proofs of Lemmas \ref{lem.comp1} and \ref{lem.comp2}.

\begin{proof} [proof of Lemma \ref{lem.comp1}]
Suppose $\mathcal{C}$ is a word maximal component in the fixed Cannon coding for $\Gamma, S$. Let $p_\Cc$ be the period of $\mathcal{C}$ and recall that we can write
\[
\bigsqcup_{j=1}^{p_\Cc} \Sigma_{j} = \Sigma_\Cc
\]
where the $(\Sigma_j, \sigma^p)$ are mixing  subshifts and $\sigma(\Sigma_{j}) = \Sigma_{j+1}$ where $j, j+1$ are taken modulo $p_\Cc$. 
Consider $\Sigma_{1}$ and let $V_1$ denote the collection of start vertices for paths  in $\Sigma_{1}$.
We let $\Gamma^{V_1}$ denote the collection of elements
\[
\Gamma^{V_1} = \{ x \in \Gamma: \ \text{$x$ can be realised as a loop in $\mathcal{C}$ based at a vertex in $V_1$} \}.
\]
Note that the  $S$-word length of every element in $\Gamma^{V_1}$ is divisible by $p_\Cc$. We define a restricted Poincar\'e series
\[
 P^{V_1}(s,t) =  \sum_{x \in \Gamma^{V_1}} e^{-s|x|_S - td(o,x)}.
\]
Writing $S_n = \{g\in \Gamma: |g|_S =n\}$ and $\Gamma^{V_1}_n = \Gamma^{V_1} \cap S_n$ we note that
\[
\limsup_{n\to\infty} \frac{\#\Gamma^{V_1}_n}{\#S_n} > 0.
\]
In particular, there exists a finite set $F \subset \Gamma$ such that $F \Gamma^{V_1} F = \Gamma$. This follows from the same argument used to prove Lemma \ref{lem.gqtprod}.
Now fix $s \in \mathbb{R}$ and note that, if $t \in \mathbb{R}$ is such that $P(s,t) < \infty$, then
\begin{equation} \label{eq:1}
\infty > P(s,t) \ge P^{V_1}(s,t).
\end{equation}
Also note that, if $|g|_S = n$ then there exist $f_1,f_2 \in F$ and $\overline{g} \in \Gamma^{V_1}$ such that $g= f_1 \overline{g} f_2$. Since $|f_1|_S$ and $|f_2|_S$ are at most $C = \max_{f\in F}|f|_S$ we have that $n- 2C \le |\overline{g}|_S \le n+2C$. Further, since  $d(o,x)$ is comparable to $|x|_S$ there exists $\widetilde{C} > 0$ such that $|d(o,\overline{g})) -d(o,x)| < \widetilde{C}$. For each $g$ we can find $\overline{g}$ as above and the association $g \mapsto \overline{g}$ induces a map 
\[
\psi: S_n \to \bigcup_{k= -2C}^{2C} \Gamma^{V_1}_{n+k}
\]
which is at most $(\#F)^2$ to $1$. In particular
\[
\sum_{|g|_S = n} e^{-s|x|_S - t d(o,x)} \le (\#F)^2 \sum_{k=-2C}^{2C} \sum_{g \in \Gamma^{V_1}_{n+k}} e^{-s|x|_S - td(o,x)}  e^{\widetilde{C}(|s|+ |t|)}.
\]
Therefore,
\begin{equation} \label{eq:2}
\begin{split}
P(s,t) &\le \sum_{n=1}^\infty (\#F)^2 e^{\widetilde{C}(|s|+|t|)} \sum_{k= -2C}^{2C}
 \sum_{g\in \Gamma^{V_1}_{n+k}}  e^{-s|x|_S - td(o,x)}\\ 
 &=(\#F)^2e^{\widetilde{C}(|s|+|t|)} \sum_{n=1}^\infty \sum_{k=-2C}^{2C}  \sum_{g \in \Gamma^{V_1}_{n+k}}   e^{-s|x|_S - td(o,x)}\\
&\le (4 C e^{ \widetilde{C}(|s|+|t|)}(\#F)^2) \  P^{V_1}(s,t). 
\end{split}
\end{equation}
Combining equations (\ref{eq:1}) and (\ref{eq:2}) we deduce that for fixed $t \in \mathbb{R}$, the abscissa of convergence of $s \mapsto P(s,t)$ and $s\mapsto P^{V_1}(s,t)$ are the same.
It follows that the Manhattan curve $s = \theta_{d/S}(t)$  is the solution  to the equation $\text{P}_\mathcal{C}(-s - t\Psi_d)=0$ where $\Psi_d$ is the H\"older potentials for $d$ on the coding. To see this note that
\[
P^{V_1}(s,t) = \sum_{n \ge 1} \sum_{x \in \SS_1: \ \s^{pn}(x) = x} e^{-spn} e^{-tS_{pn}\P_d(x)}
\]
and by Proposition 5.1 \cite{ParryPollicott}, for fixed $t\in\R$, 
\[
\sum_{x \in \SS_1: \  \s^{pn}(x) = x} e^{-tS_{pn}\P_d(x)} = e^{pn\Pr_\Cc(-t\P_d)}(1+O(\theta^n))
\]
 for some $\theta \in (0,1)$ as $n\to\infty$.
Overall we have shown that
$\theta_{d/S}(t) = \text{P}_\mathcal{C}(-t\Psi_d)$ as required.
\end{proof}

We now move on to the proof of the second lemma.

\begin{proof} [proof of Lemma \ref{lem.comp2}]
We will imitate the proof of Theorem 5.4 from \cite{cantrell.tanaka.2}: the crucial difference being that the strongly hyperbolic metric used to define the Mineyev flow will be the same as the metric $d$ we are studying.
Fix a $-v_d\Psi_d$ maximal component $\Cc_0$ and define $\P_{s, t}:=-s-t \Psi_d$. Note that the Birkhoff sums of $\P_{s,t}$ output $-s|\cdot|_S - td(o,\cdot)$ up to a uniformly bounded error as in Lemma \ref{lem.Birkhoff}.
If $s=\th_{d/S}(t)$, then $-s|\cdot|_S - t d(o,\cdot)$ has exponent $0$ and thus we have $\text{P}(\P_{s, t})=0$.
Therefore we have that $\Pr_{\Cc_0}(\P_{s, t}) \le 0$.
If $s=\th_{d/S}(t)$, then there exists a $\G$-invariant Radon measure $\L_{s, t}$ on $\partial^2 \G$ equivalent to $\m_{s, t}\otimes \m_{s, t}$ (Proposition \ref{prop.prodm}).
For the measure $\L_{s, t}$ on $\partial^2 \G$, by Lemma \ref{lem.measub}
there is a $\sigma$-invariant probability measure $\lambda_{s, t}$ on $\overline{\SS}_0$ such that
\[
\ev_\ast \lambda_{s, t} \otimes \Leb_{[0, T]} \le c\L_{s, t}\otimes dt \ \ \text{on $\partial^2 \G \times \R$},
\]
for some positive constants $T, c>0$, and thus $\ev_\ast \lambda_{s, t} \le C \m_{s, t}\otimes \m_{s, t}$ on $\partial^2 \G$ for some positive constant $C>0$. Here $\Leb_{[0,T]}$ denotes the Lebesgue measure on $[0,T]$.
Suppose we used the metric $d$ to define $(\overline{\SS}_0, \s)$ in Theorem \ref{thm.enhanced} and that $q$ is the corresponding quasi-metric in $\partial \G$. Then
we have that
\[
D_{q}(\x_-, \x_+: \L_{s, t})=2\tau_{s, t} \ \  \text{for $\L_{s, t}$ almost all $(\x_-, \x_+) \in \partial^2 \G$},
\]
where $\tau_{s, t}$ is the local intersection number of $-s |\cdot|_S-t d(o,\cdot)$ relative to $ d$ , $\m_{s, t}$-almost everywhere.
This follows from Lemma \ref{lem.intersect} and equation (\ref{eq.shadow}).
Moreover, Lemma \ref{lem.Birkhoff} implies that
\[
\int_{\overline{\SS}_0}\P_{s, t} \ d\lambda_{s, t} = \tau_{s, t}\int_{\overline{\SS}_0}\P_d \ d\lambda_{s, t}
\ 
\text{
and so
} \ 
h(\s, \lambda_{s, t})+\int_{\overline{\SS}_0}\P_{s, t}\,d\lambda_{s,t} =h(\s, \lambda_{s, t})+\tau_{s, t}\int_{\overline{\SS}_0}\P_d\,d\lambda_{s, t} \ge 0.
\]
We then have by Proposition \ref{prop.vp} that $\text{P}_{\Cc_0}(\P_{s, t}) \ge 0$ and so
 for $t \in \R$, if $s=\th_{d/S}(t)$ then
$ -s + \text{P}_{\Cc_0}(-t \Psi_d) = \text{P}_{\Cc_0}(\P_{s, t})=0$
concluding the proof.
\end{proof}

\subsection{Conjugacy classes and loops}
The aim of this subsection is to prove that, within the Cannon coding, we can see the conjugacy classes of $\G$ as loops.
Our main result of this section is a corollary of the following proposition. Recall that the $S$-word length of a conjugacy class $[x] \in \conj$ is the $S$-length the shortest word(s) in $[x]$. Furthermore there is a uniform constant $C >0$ such that the word length and $S$-translation length of any conjugacy class agree up to error $C$.

\begin{proposition}\label{prop.loops}
Suppose that $\mathcal{C}$ is a word maximal component in a Cannon coding for $\G, S$. Then there exist $L, N >0$ such that for any conjugacy class $[g] \in \conj(\G)$ with word length at least $L$, there is a periodic orbit $x =(x_0, x_1, \ldots, x_l, x_0, \ldots ) \in \Sigma_\mathcal{C}$ (for some $l >0$) such that $\ev(x_0, \ldots, x_l,x_0)$ belongs to one of $[g^{\pm N}]$.
\end{proposition}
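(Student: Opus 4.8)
## Proof Plan for Proposition \ref{prop.loops}

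**Setup and strategy.** The plan is to fix a word maximal component $\mathcal{C}$ and exploit Lemma \ref{lem.gqtprod}: there is a finite set $F \subset \G$ with $F\G_\mathcal{C}F = \G$. Given a conjugacy class $[g]$, I want to replace $g$ (after conjugation, which does not change $[g]$, and after raising to a bounded power $N$) by an element that is represented by a \emph{loop} in $\mathcal{C}$. The natural idea is to take a cyclically reduced representative $g_0$ of $[g]$ whose $S$-length is close to $\ell_{|\cdot|_S}[g]$, so that large powers $g_0^k$ have $|g_0^k|_S$ growing linearly with a quality controlled by hyperbolicity (an infinite geodesic through the axis). Then use $F\G_\mathcal{C}F = \G$ to write $g_0^M = f_1 \gamma f_2$ with $f_1, f_2 \in F$ and $\gamma \in \G_\mathcal{C}$, i.e. $\gamma$ is realised by a path in $\mathcal{C}$; the goal is to upgrade this path into a genuine closed loop at the cost of passing to a bounded further power.

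**Key steps.** First I would record the geometry: since $[g]$ has word length $\ge L$ with $L$ large, a cyclically reduced representative $g_0$ is a hyperbolic element whose powers $g_0^k$ lie within bounded Hausdorff distance of a bi-infinite geodesic (the quasi-axis), with the constants uniform by the Morse lemma. Second, I would take $M$ large (to be chosen depending only on $F$, the hyperbolicity constant, and the graph $\mathcal{C}$, \emph{not} on $g$) and apply $F\G_\mathcal{C}F=\G$ to $g_0^M$, obtaining $g_0^M = f_1 \gamma f_2$. Conjugating, $f_2 g_0^M f_2^{-1} = (f_2 f_1)\gamma$, so after absorbing $f_2f_1 \in F\cdot F$ (a bounded set) we reduce to controlling $\gamma$ and a bounded prefix. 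Third — the crucial combinatorial step — I would argue that the path in $\mathcal{C}$ representing $\gamma$ begins and ends at vertices lying in a fixed finite vertex set, so that by concatenating a bounded number $N'$ of copies of (a cyclic adjustment of) this path one closes up the loop: because $\mathcal{C}$ is a single connected (transitive) component, any two of its vertices are joined by a path of bounded length inside $\mathcal{C}$, and the geodesic-coding property of the Cannon automaton forces the concatenation of $\gamma$-blocks to again be (close to) a geodesic word for $g_0^{MN'}$, hence itself codable. One then checks that $\ev$ of the resulting loop lands in $[g_0^{\pm N}] = [g^{\pm N}]$ for $N = MN'$, with the $\pm$ accounting for the possibility that cyclically reducing or reversing orientation of the axis is needed to match the direction of edges in $\mathcal{C}$, and that $l$ (the loop length) is finite as required. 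Finally, set $L$ large enough that the cyclically reduced representative genuinely has the hyperbolic behaviour used above.

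**Main obstacle.** The hard part is the third step: turning the \emph{path} in $\mathcal{C}$ produced abstractly by growth quasi-tightness into an honest \emph{closed loop} while only paying a uniformly bounded power. The subtlety is that $F\G_\mathcal{C}F=\G$ only tells us $\gamma$ is coded by \emph{some} path in $\mathcal{C}$; its endpoints need not coincide, and naively splicing copies of $\gamma$ together need not produce a word that is geodesic (hence need not correspond to a path in the Cannon automaton at all). The resolution I expect to use is that for $g_0$ hyperbolic, the concatenation $g_0^{M} g_0^{M} \cdots$ fellow-travels the quasi-axis, so the freely-reduced/geodesic word for $g_0^{kM}$ is, up to bounded error, the $k$-fold concatenation of a fixed word; this "periodic geodesic" structure, combined with the fact that $\mathcal{C}$ has only finitely many vertices and is transitive (so short connecting paths exist), lets one insert bounded correction segments between the $\gamma$-blocks and close the loop inside $\mathcal{C}$ after a controlled number of repetitions, invoking the uniqueness/bijectivity of the coding (Definition \ref{def.sms}) to conclude the resulting periodic sequence genuinely lies in $\Sigma_\mathcal{C}$ and evaluates into $[g^{\pm N}]$.
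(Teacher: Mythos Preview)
Your setup (the first two steps) matches the paper, but the third step has a genuine gap. Inserting ``bounded correction segments'' between copies of the $\gamma$-path does produce a loop in $\mathcal{C}$, but the element that loop evaluates to is $\gamma c_1 \gamma c_2 \cdots$, where the $c_i$ are the group elements read along the connectors; this has no reason to lie in $[g^{\pm N}]$. Transitivity of $\mathcal{C}$ supplies connectors but gives no control over what they are as group elements. Likewise, the ``periodic geodesic structure'' of the axis of $g_0$ is a statement about the Cayley graph, not about the automaton: the Cannon path coding $g_0^{kM}$ is \emph{not} a concatenation of $k$ copies of the path coding $g_0^{M}$, so you cannot splice and expect $\ev$ to land in the right conjugacy class.

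The paper's argument avoids this with a pigeonhole on \emph{two} coordinates simultaneously. Take $M=qk$ with $q\ge\#V$ and $k=\#B(r)$, and (as in your step~2) write $\ev(y)=h_1 g_0^{M} h_2$ with $y=(y_0,\dots,y_w)$ a path in $\mathcal{C}$. For each $0\le n\le M$ choose $p_n$ so that $\ev(y_0,\dots,y_{p_n})$ lies within $r$ of $h_1 g_0^{\,n}$ (fellow-travelling of the two quasi-geodesics), and record the pair $\bigl(y_{p_n},\,(h_1 g_0^{\,n})^{-1}\ev(y_0,\dots,y_{p_n})\bigr)\in V\times B(r)$. Since $M+1>\#V\cdot\#B(r)$, some $n_1<n_2$ give the same pair. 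Equality of the first coordinate makes $(y_{p_{n_1}},\dots,y_{p_{n_2}})$ an honest loop; equality of the second coordinate (call it $a$) forces that loop to evaluate to exactly $a^{-1}g_0^{\,n_2-n_1}a\in[g^{\,n_2-n_1}]$ (or to $[g^{-(n_2-n_1)}]$ when $p_{n_1}\ge p_{n_2}$ --- this, not orientation of the axis, is the source of the $\pm$). Taking $N=M!$ absorbs all possible exponents $n_2-n_1\le M$. The idea you are missing is precisely this second coordinate: tracking only the automaton vertex yields a loop evaluating to $a_1 g_0^{\,n} a_2$ with $a_1,a_2$ bounded but unrelated, which need not be conjugate to $g_0^{\,n}$.
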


\begin{proof}
Fix $q \in \Z_{\ge 0}$ larger than the number of vertices in the Cannon graph and let $k = \#B(r) = \#\{x \in \G : |x|_S \le r\}$ where $r>0$ is to be chosen later. Given a conjugacy class $[g] \in \conj$ with minimal element $g$ with $|g|_S \ge L$, we will show that there exists $0 \le n \le qk$ and $a \in B(r)$ such that one of $a^{-1} g^{\pm n} a$ and hence one of $[g^{\pm n}]$ is represented by a loop in $\mathcal{C}$. The conclusion then follows with $N = (kq)!$.

Let $[g]$ be a conjugacy class of length at least $L$ with $g$ being an element of minimal length. Then, the path in $C_\G(S)$ obtained from repeating $g$ is an $L$-local geodesic (i.e. subpaths of length $L$ are geodesic). Further, assuming that $L$ is large enough this path is a $(K,C)$-quasigeodesic for some $K,C >0$ (see \cite{Calegari}). Now by Lemma \ref{lem.gqtprod} there is a finite path $y=(y_0,y_1, y_2, \ldots, y_w)$ (for some $w$) in $\mathcal{C}$ such that $\ev(y) = h_1 g^{qk} h_2$ for some $h_1, h_2 \in B(r_0)$ for some $r_0 >0$ independent of $g$. 
Now note that the path in $C_\G(S)$ (beginning at the identity) obtained from composing $h_1$ followed by $pq$ copies of $g$ and ending with  $h_2$ is a $(K',C')$-quasigeodesic for some $K',C' > 0$ independent of $g, q, k, h_1, h_2$ (we can enlarge $L$ if necessary). Fix $r>0$ so that each $(K',C')$-quasigeodesic is at most Hausdorff distance $r$ away from a genuine geodesic. Since the path in $C_\G(S)$ obtained from composing $h_1, g^{qk}, h_2$  has the same endpoints as $\ev(y)$ (which is a genuine geodesic), for each $0 \le n  \le kq$ there exists $p_n$ with 
\[
(h_1g^n)^{-1} \ev(y^{p_n}) \in B(r) \ \ \text{where \, $y^{p_n} = (y_1, \ldots,y_{p_n})$. }
\]
Consider the function
\[
f: \{0,\ldots, kq \} \to V \times B(r) \  \  \text{defined by} \ \  f(n) = (y_{p_n}, (h_1g^{n})^{-1} \ev(y^{p_n}))
\]
where $V$ is the vertex set of the Cannon graph. This is a function from a set of cardinality $ kq + 1$ to a set of cardinality $kp$. Hence there is $n_1 < n_2$  such that
\[
y_{p_{n_1}} = y_{p_{n_2}} \ \ \text{ and } \ \ (h_1g^{n_1})^{-1} \ev(y^{p_{n_1}}) = (h_1g^{n_2})^{-1}\ev(y^{p_{n_2}}) : =a.
\]
 Suppose $p_{n_1} < p_{n_2}$. Let $x = (y_{p_{n_1}}, \ldots, y_{p_{n_2}}, y_{p_{n_1}} \ldots ) \in \Sigma_\mathcal{C}$ and $n = n_2 - n_1$. Then $x$ is a periodic point with period $n$ and
\begin{align*}
a^{-1}g^n  a &= \ev(y^{p_{n_1}})^{-1} h_1 g^{n_1} g^{n_2 - n_1} g^{-n_2} (h_1)^{-1} \ev(y^{p_{n_2}}) \\
&= \ev(y^{p_{n_1}})^{-1} \ev(y^{p_{n_2}})\\
&= \ev(y_{p_{n_1}}, \ldots, y_{p_{n_2}}) = \ev(x_0, \ldots, x_{n-1}, x_0).
\end{align*}
Therefore $[g^n]$ is represented by a loop in $\mathcal{C}$. If $p_{n_1} \ge p_{n_2}$ then setting  $x = (y_{p_{n_2}}, \ldots, y_{p_{n_1}}, y_{p_{n_2}} \ldots )$ and $n=n_2 - n_1$ gives
\begin{align*}
a^{-1}g^{-n}  a &= \ev(y^{p_{n_2}})^{-1} h_1 g^{n_2} g^{-n_2 + n_1} g^{-n_1} (h_1)^{-1} \ev(y^{p_{n_1}}) \\
&= \ev(y^{p_{n_2}})^{-1} \ev(y^{p_{n_1}})\\
&= \ev(y_{p_{n_2}}, \ldots, y_{p_{n_1}}) = \ev(x_0, \ldots, x_{n-1}, x_0)
\end{align*}
which implies that $[g^{-n}]$ is represented by a loop in $\mathcal{C}$. This concludes the proof.
\end{proof}

\begin{corollary} \label{cor.loops}
Suppose that $\mathcal{C}$ is a word maximal component in a Cannon coding. Then there exists an integer $M \ge 1$ such that, for every non-torsion conjugacy class $[g] \in \conj'$, there exists a periodic orbit $(x_0, x_1, \ldots, x_l, x_0, \ldots)$ (for some $l >0$) such that $\ev(x_0,\ldots,x_l,x_0)$ belongs to one of $[g^{\pm M}]$.
\end{corollary}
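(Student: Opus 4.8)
The plan is to derive Corollary \ref{cor.loops} from Proposition \ref{prop.loops} by a simple uniformization argument. Proposition \ref{prop.loops} already produces, for every conjugacy class whose word length is at least the threshold $L$, a periodic orbit in $\Sigma_\mathcal{C}$ representing one of $[g^{\pm N}]$. So the only two things to arrange are: handling the finitely many conjugacy classes of word length below $L$, and promoting the exponents $\pm N$ (which depend on the class through the choice of power fed to the proposition) to a single integer $M$ valid for all non-torsion classes at once.

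First I would record two elementary facts. (i) The word length and the $S$-translation length $\ell_S$ of a conjugacy class agree up to the fixed additive constant $C$ recalled just before Proposition \ref{prop.loops}, and $\ell_S$ is multiplicative, $\ell_S[g^k]=k\,\ell_S[g]$, directly from its definition as a limit. (ii) The constant $L$ of Proposition \ref{prop.loops} may be enlarged freely (a class of word length $\ge L'\ge L$ is in particular of word length $\ge L$), so I assume $L\ge 10C$. Since the ball of radius $L$ in $\Cay(\G,S)$ is finite, there are only finitely many conjugacy classes of word length $<L$; let $[h_1],\dots,[h_j]$ be the non-torsion ones among them. For each $i$, non-torsionality gives $\ell_S[h_i]>0$, so the word length of $[h_i^m]$, which lies within $C$ of $m\,\ell_S[h_i]$, exceeds $L$ for all large $m$; let $n_i$ be the least such $m$. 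I then set $n_0=\max\{1,n_1,\dots,n_j\}$ and $M=n_0!\cdot N$.

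To verify $M$ works, take an arbitrary $[g]\in\conj'$ and let $n_{[g]}$ be the least positive integer with $[g^{n_{[g]}}]$ of word length $\ge L$ (well defined as above, since $[g]$ is non-torsion). If $[g]$ already has word length $\ge L$ then $n_{[g]}=1$; otherwise $[g]=[h_i]$ and $n_{[g]}=n_i\le n_0$. In all cases $n_{[g]}$ divides $M/N=n_0!$, so write $M/N=n_{[g]}\,t$ with $t\ge1$. Then $[g^{M/N}]$ still has word length $\ge L$: trivially when $t=1$, and when $t\ge 2$ because $\ell_S[g^{M/N}]=t\,\ell_S[g^{n_{[g]}}]\ge 2(L-C)$ forces the word length of $[g^{M/N}]$ to be at least $2L-3C\ge L$ by the choice $L\ge 10C$. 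Applying Proposition \ref{prop.loops} to the conjugacy class $[g^{M/N}]$ now gives a periodic orbit $(x_0,\dots,x_l,x_0,\dots)\in\Sigma_\mathcal{C}$ with $\ev(x_0,\dots,x_l,x_0)$ in one of $\big[(g^{M/N})^{\pm N}\big]=[g^{\pm M}]$, which is the claim.

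I do not expect a serious obstacle: the argument is essentially book-keeping. The one point requiring care — and what forces the slightly indirect route through $n_{[g]}$ and the factorial, rather than simply taking $M=N$ — is keeping the powers $[g^{M/N}]$ above the length threshold when they are fed back into Proposition \ref{prop.loops}. One could instead invoke the standard uniform positive lower bound on the translation length of non-torsion elements of a hyperbolic group, which bounds $n_{[g]}$ uniformly outright, but the finiteness-of-short-classes step above avoids citing it.
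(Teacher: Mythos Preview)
Your proof is correct and follows the same approach as the paper: reduce to Proposition \ref{prop.loops} by passing to a uniform power. The paper's version is exactly the alternative you flag in your final paragraph --- it simply takes a single $K$ with $[g^K]$ of word length at least $L$ for every $[g]\in\conj'$ (implicitly using the uniform positive lower bound on translation lengths of non-torsion elements) and sets $M=KN$.
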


\begin{proof}
We begin by recalling that the conjugacy classes that have translation length $0$ for any (and hence all) $d\in\Dc_\G$ are the torsion classes and that there are only finitely many of them \cite{Calegari}. Let $L,N > 0$ be as in Proposition \ref{prop.loops}. Take $K >0$ such that every conjugacy class $[g] \in \conj'$ has the property that $[g^{K}]$ has word length at least $L$. Then, by Proposition \ref{prop.loops}  $[g^{\pm KN}]$ is represented by a loop in $\mathcal{C}$. The result follows with $M = KN.$
\end{proof}


\section{Proofs of main results} \label{section.proof}
We are now ready to prove Theorem \ref{thm.1} and Theorem \ref{thm.2}. Before we prove these results we need to study the arithmetic properties of certain H\"older potentials on the Cannon coding. Throughout the rest of this section suppose we have fixed a Cannon coding for a pair $\G, S$. Let $\SS$ be the corresponding subshift which is described by the transition matrix $A$ for our fixed coding.
\begin{lemma} \label{lem.non-arith}
Suppose that $\Psi_d$ is the H\"older potential for a strongly hyperbolic metric $d \in \Dc_\G$ and that the length spectrum of $d$ is non-arithmetic. Then $\Psi_d$ is non-arithmetic on each word maximal components $\mathcal{C}$, i.e. the restrictions $\Psi_d: \Sigma_\mathcal{C} \to \R$ are non-arithmetic for any word maximal component $\mathcal{C}$. 
\end{lemma}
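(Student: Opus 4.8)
The plan is to argue by contradiction: suppose $\Psi_d$ is arithmetic on some word maximal component $\mathcal{C}$, so there is $a > 0$ with $\{S_n\Psi_d(x) : x \in \Sigma_\mathcal{C}, \sigma^n(x) = x\} \subseteq a\mathbb{Z}$. The strategy is to transfer this lattice condition from periodic orbits in $\mathcal{C}$ to the $d$-translation lengths of conjugacy classes in $\Gamma$, using Corollary \ref{cor.loops}, and thereby contradict the non-arithmeticity of the length spectrum of $d$.

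First I would recall the relationship between Birkhoff sums of $\Psi_d$ over a periodic orbit and translation lengths. If $x = (x_0, \ldots, x_{l}, x_0, \ldots)$ is a periodic orbit of period $n = l+1$ in $\Sigma_\mathcal{C}$, then $\mathrm{ev}(x_0, \ldots, x_l, x_0)$ is an element $g \in \Gamma$, and by Lemma \ref{lem.holder} we have $S_n\Psi_d(x) = d(o, \mathrm{ev}_n(x)) + O(1)$. Iterating the orbit $m$ times gives $S_{mn}\Psi_d(x) = d(o, g^m) + O(1)$ with the \emph{same} $O(1)$ constant, and dividing by $m$ and letting $m \to \infty$ yields $S_n\Psi_d(x) = \ell_d[g] + O(1)$ — more precisely, $\ell_d[g] = \lim_{m\to\infty} S_{mn}\Psi_d(x)/m = S_n\Psi_d(x)$ exactly, since $S_{mn}\Psi_d(x) = m \cdot S_n\Psi_d(x)$ by periodicity. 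Hence the arithmeticity hypothesis says precisely that $\ell_d[g] \in a\mathbb{Z}$ for every $g$ arising from a loop in $\mathcal{C}$.

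Next I would invoke Corollary \ref{cor.loops}: there is an integer $M \ge 1$ such that every non-torsion conjugacy class $[g] \in \conj'$ has some $[g^{\pm M}]$ represented by a loop in $\mathcal{C}$. Since $\ell_d[g^{\pm M}] = M\,\ell_d[g]$, the previous paragraph gives $M\,\ell_d[g] \in a\mathbb{Z}$, i.e. $\ell_d[g] \in (a/M)\mathbb{Z}$, for every $[g] \in \conj'$. Torsion classes have $\ell_d = 0 \in (a/M)\mathbb{Z}$ trivially. Therefore $\{\ell_d[x] : [x] \in \conj\} \subseteq (a/M)\mathbb{Z}$, contradicting the assumption that the length spectrum of $d$ is non-arithmetic. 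This proves $\Psi_d$ is non-arithmetic on every word maximal component.

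The main obstacle — and the point that needs care rather than cleverness — is making sure the $O(1)$ error in Lemma \ref{lem.holder} genuinely cancels. The cleanest way is to observe that $S_n\Psi_d$ on a periodic orbit of period $n$ is \emph{literally} additive under iteration, so $S_{mn}\Psi_d(x)/(mn) \to S_n\Psi_d(x)/n$ trivially, while simultaneously $d(o, g^m)/(mn) \to \ell_d[g]/n$ by definition of translation length, and the uniform $O(1)$ bound forces these limits to be equal, giving $S_n\Psi_d(x) = \ell_d[g]$ with no error at all. One should also double-check that the element $g = \mathrm{ev}(x_0,\dots,x_l,x_0)$ obtained from closing up the orbit is indeed (conjugate to) a power as in the corollary, and that periodic orbits in $\Sigma_\mathcal{C}$ correspond exactly to loops in $\mathcal{C}$, which is immediate from the definition of the subshift. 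No deep input beyond Lemma \ref{lem.holder} and Corollary \ref{cor.loops} is required.
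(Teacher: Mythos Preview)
Your proposal is correct and follows essentially the same argument as the paper: assume arithmeticity on a word maximal component, identify $S_n\Psi_d(x)$ on a periodic orbit with $\ell_d$ of the corresponding group element via Lemma \ref{lem.holder}, then invoke Corollary \ref{cor.loops} to force the full length spectrum into $(a/M)\Z$. Your treatment is in fact slightly more explicit than the paper's in justifying the exact equality $S_n\Psi_d(x)=\ell_d[g]$ via the periodicity identity $S_{mn}\Psi_d(x)=m\,S_n\Psi_d(x)$ and the uniform $O(1)$ bound.
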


\begin{proof}
Suppose for contradiction that $\Psi_d$ is arithmetic on $\mathcal{C}$ and the periodic orbits take values in $a\Z$. Since $\Psi_d$ is H\"older continuous for $x = (x_n)_{n=0}^\infty$ we have
\[
S_n\Psi_d(x) = d(o,\ev_n(x)) + O(1) \text{ uniformly for $n \ge 1$}
\]
It follows that for a periodic orbit  $x = (x_0, \ldots, x_{n-1}, x_0, \ldots) \in \Sigma_{\Cc}$ with $\sigma^n(x) = x$, we have that
$
S_n\Psi_d(x) = \ell_d[\ev(x_0,\ldots, x_{n-1}, x_0)].
$
Now by Corollary \ref{cor.loops} either $\g \in \conj$ has $\ell_d[\g] = 0$ or there exist $M$ and $x = (x_0, \ldots, x_l, x_0, \ldots) \in \Sigma_\Cc$ such that $\ev(x_0,\ldots, x_l, x_0)$ belongs to $\g^{\pm M}$. In either case, $\ell_d[\g]$ belongs to $\frac{\alpha}{M} \Z$ which implies that $d$ has arithmetic length spectral contrary to our assumption.
\end{proof}

The same proof shows the following.

\begin{lemma}\label{lem.ba}
Suppose that $\Psi_d$ is the H\"older potential for a strongly hyperbolic metric $d \in \Dc_\G$ and fix a word maximal component $\Cc$. Then there exists a pair of conjugacy classes $[x], [y]$ with $\ell_d[x]/\ell_d[y]$ badly approximable if and only if there exist two periodic orbits $x_1, x_2 \in \Sigma_\Cc$ with $\sigma^{n_1}(x_1) = x_1, \sigma^{n_2}(x_2) = x_2$ such that $\Psi_d^{n_1}(x_1)/\Psi_d^{n_2}(x_2)$ is badly approximable.
\end{lemma}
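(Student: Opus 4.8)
The plan is to rerun the argument from the proof of Lemma \ref{lem.non-arith}, using the dictionary between periodic orbits in $\Sigma_\Cc$ and conjugacy classes furnished by Corollary \ref{cor.loops}, but now keeping track of the stronger Diophantine property rather than mere arithmeticity. First I would record the basic exact identity. If $x=(x_0,\ldots,x_{n-1},x_0,\ldots)\in\Sigma_\Cc$ satisfies $\sigma^n(x)=x$ and $w:=\ev(x_0,\ldots,x_{n-1},x_0)$ is the associated loop element, then
\[
\Psi_d^{n}(x)=S_n\Psi_d(x)=\ell_d[w]\quad\text{\emph{exactly}},
\]
not just up to $O(1)$: apply the estimate $S_{nk}\Psi_d(x)=d(o,\ev_{nk}(x))+O(1)$ from Lemma \ref{lem.holder} together with $\ev_{nk}(x)=w^{k}$ and $S_{nk}\Psi_d(x)=kS_n\Psi_d(x)$, divide by $k$, and let $k\to\infty$.

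With this in hand the backward implication is immediate: given periodic orbits $x_1,x_2\in\Sigma_\Cc$ with $\sigma^{n_i}(x_i)=x_i$ and $\Psi_d^{n_1}(x_1)/\Psi_d^{n_2}(x_2)$ badly approximable, let $w_i$ be the loop elements and set $[x]=[w_1]$, $[y]=[w_2]$; then $\ell_d[x]/\ell_d[y]=\Psi_d^{n_1}(x_1)/\Psi_d^{n_2}(x_2)$ is badly approximable. For the forward implication, suppose $[x],[y]\in\conj$ have $\ell_d[x]/\ell_d[y]$ badly approximable. A badly approximable number is irrational, hence nonzero and finite, so both $\ell_d[x]$ and $\ell_d[y]$ are strictly positive and therefore $[x],[y]\in\conj'$. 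Applying Corollary \ref{cor.loops} to the fixed word maximal component $\Cc$ yields a uniform $M\ge 1$ and periodic orbits $x_1,x_2\in\Sigma_\Cc$, with periods $n_1,n_2$ say, whose loop elements lie in $[x^{\pm M}]$ and $[y^{\pm M}]$ respectively. By the identity above, together with $\ell_d[g^{M}]=M\ell_d[g]$ and $\ell_d[g^{-1}]=\ell_d[g]$ (the latter from left-invariance and symmetry of $d$), we get $\Psi_d^{n_1}(x_1)=M\ell_d[x]$ and $\Psi_d^{n_2}(x_2)=M\ell_d[y]$, so $\Psi_d^{n_1}(x_1)/\Psi_d^{n_2}(x_2)=\ell_d[x]/\ell_d[y]$ is badly approximable.

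I do not expect a genuine obstacle here beyond the bookkeeping already present in Lemma \ref{lem.non-arith}. The one point requiring a moment's care is the membership $[x],[y]\in\conj'$, which is what licenses the use of Corollary \ref{cor.loops}: this is handled by noting that a rational value of $\ell_d[x]/\ell_d[y]$ — in particular the value $0$, or an undefined ratio arising from $\ell_d[y]=0$ — cannot be badly approximable, so the torsion classes, the only ones with vanishing translation length, are excluded automatically. The other point is the \emph{exact} form of $S_n\Psi_d(x)=\ell_d[\ev(\cdots)]$ on periodic orbits, which as above comes for free from iterating Lemma \ref{lem.holder} over full periods; and since multiplying both $\ell_d[x]$ and $\ell_d[y]$ by the common factor $M$ leaves their quotient unchanged, no separate analysis of how the badly-approximable property behaves under this scaling is needed.
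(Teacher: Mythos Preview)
Your proof is correct and follows exactly the approach the paper intends: the paper's own proof of Lemma \ref{lem.ba} is simply the sentence ``The same proof shows the following,'' referring back to Lemma \ref{lem.non-arith}, and you have faithfully unpacked that argument, including the exact identity $S_n\Psi_d(x)=\ell_d[\ev(x_0,\ldots,x_{n-1},x_0)]$ on periodic orbits and the use of Corollary \ref{cor.loops} with the common factor $M$ cancelling in the quotient.
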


We can now prove Theorem \ref{thm.2}. We start by writing $\eta$ as
\begin{equation} \label{eq.5}
\eta(s) = \sum_{n=1}^\infty \sum_{|x|_S = n} e^{-sd(o,x)}
\end{equation}
with the aim of rewriting this expression in terms of transfer operators. Let $\Cc_j$ denote the word maximal components. By Proposition \ref{prop.main} these are precisely the $-v_d\P_d$ maximal components.
We define transfer operators $L_j$, $j=1, \ldots, m$ for each component $\mathcal{C}_j$. Let $p_j$ denote the period of the component $\Cc_j$. Suppose the entire Cannon coding is described by the matrix $A$. Let, for $j=1, \ldots, m$ the matrix $C_j$ be the matrix with the same dimensions and entries as $A$ except for the fact that each word maximal component that is not $\mathcal{C}_j$ is replaced by the zero matrix. We then define transfer operators $L_{j,s}: F_\theta(\Sigma_{C_j}) \to F_\theta(\Sigma_{C_j}) $ by
\[
L_{j,s} f(x) = \sum_{\sigma(y) = x, y \neq \dot{0}} e^{-s\Psi_d(y)} f(y)
\]
for $s \in \C$. Here $\dot{0} \in \SS$ is the sequence consisting only of $0$s. Note that by construction each $C_j$ has a unique maximal component for $-v_d \Psi_d$. 

\begin{proposition}\label{prop.sr1}
For each $j=1,\ldots, m$ the operators $L_{s,j}$ satisfy the following.
\begin{enumerate}
\item Let $\chi \in F_\theta(\SS)$ be the indicator function on the one-cyclinder for the initial vertex $\ast$ in the Cannon coding. Then we have that 
\[
\sum_{|x|_S \in M_j(n)} e^{-s d(o,x)} = L_{j,s}^n \chi(\dot{0})
\]
where for $n\ge 1$,  $M_j(n) \subset \{x\in \G: |x|_S = n\}$ is the collection of group elements in $\G$ whose corresponding path in $\Gc$ starting at $\ast$ does not enter a word maximal component that is not $\Cc_j$.
\item For $s$ with $\mathrm{Re}(s) > v_d$, the spectral radius of each $L_{j,s}$ is strictly smaller than $1$.
\item There exits $\epsilon > 0$ such that for all $|s - v_d| < \epsilon$ the spectrum of each operator $L_{j,s}$ is of the following form: $L_{j,s}$ has $p_j$ simple maximal eigenvalues of the form $e^{\Pr_\Cc(-s\P_d)} e^{2\pi i l\p_j}$ and the rest of the spectrum is contained in a disk $\{|s| < \rho < 1\}$ for some $\rho$ independent of $|s-1|<\epsilon$. Here $\Pr_C(-s\P_d)$ is an analytic extension to $|s-1|<\epsilon$ of the pressure obtained from the variational principle applied to a fixed word maximal component $\Cc$. In particular $\Pr_\Cc(-s\P_d)$ is independent of $C_j$.
\end{enumerate}
\end{proposition}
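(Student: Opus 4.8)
The plan is to establish the three claims in order, deriving (2) and (3) from the Ruelle--Perron--Frobenius theory for (reducible, possibly periodic) subshifts of finite type; the essential structural input is Proposition \ref{prop.main}, which guarantees that the only component of $C_j$ on which $-v_d\Psi_d$ is maximal is the word-maximal component $\Cc_j$. For (1) I would expand $L_{j,s}^n\chi$ directly:
\[
L_{j,s}^n\chi(\dot{0})=\sum_{\sigma^n(y)=\dot{0}} e^{-sS_n\Psi_d(y)}\,\chi(y),
\]
where the sum runs over those $y$ avoiding $\dot{0}$ at every one of the first $n$ steps, as dictated by the definition of $L_{j,s}$. Since in the augmented graph the vertex $0$ has the single outgoing edge $0\to 0$ (labelled by the identity) and is reached from every other vertex, such $y$ are exactly the one-sided paths of the form $(\ast,y_1,\dots,y_{n-1},0,0,\dots)$ staying in $C_j$, with $\chi(y)$ forcing $y_0=\ast$. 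By the cocycle identity for the Busemann function of a strongly hyperbolic metric and our choice $\Psi_d(\cdot)=\beta_o(\ev_1(\cdot),\ev(\cdot))$, the Birkhoff sums telescope and recover $d(o,\ev_n(y))$ along the geodesic path coded by $y$, so $e^{-sS_n\Psi_d(y)}=e^{-sd(o,\ev_n(y))}$. As $\ev$ is a bijection between paths from $\ast$ and $\G$ and $C_j$ was obtained from $A$ by deleting the word-maximal components other than $\Cc_j$, the group elements $\ev_n(y)$ occurring are precisely those of $M_j(n)$, giving (1).

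For (2), put $\sigma_0=\mathrm{Re}(s)>v_d$. Since $\Psi_d$ is real-valued, $|L_{j,s}^nf|\le L_{j,\sigma_0}^n|f|$ pointwise, so by the standard Lasota--Yorke inequality the spectral radius of $L_{j,s}$ on $F_\theta(\Sigma_{C_j})$ is at most $\max\{\rho_\infty,\theta\}$, where $\rho_\infty$ is the spectral radius of the real operator $L_{j,\sigma_0}$ on $C^0$. The latter, being block-triangular along the component decomposition of $C_j$ (the states $\ast$ and $0$ lie on no loop, and the self-loop at $0$ is excluded), equals $\max_{\Cc'}e^{\Pr_{\Cc'}(-\sigma_0\Psi_d)}$ over the recurrent components $\Cc'$ of $C_j$. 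Each $t\mapsto\Pr_{\Cc'}(-t\Psi_d)$ is real-analytic and strictly decreasing, since $\int\Psi_d\,d\lambda>0$ for every $\sigma$-invariant $\lambda$ (Birkhoff sums of $\Psi_d$ grow linearly along word-geodesic loops); at $t=v_d$ one has $\Pr_{\Cc_j}(-v_d\Psi_d)=\theta_{d/S}(v_d)=0$ by Lemma \ref{lem.comp1} and Lemma \ref{lem.holder}, while $\Pr_{\Cc'}(-v_d\Psi_d)<0$ for every non-word-maximal $\Cc'$ by Proposition \ref{prop.main}. Hence $\Pr_{\Cc'}(-\sigma_0\Psi_d)<0$ for all $\Cc'$ when $\sigma_0>v_d$, so $\rho_\infty<1$ and the spectral radius of $L_{j,s}$ is $<1$.

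For (3) I would first treat the real operator $L_{j,v_d}$. By the block-triangular structure, $\mathrm{spec}(L_{j,v_d})=\{0\}\cup\bigcup_{\Cc'}\mathrm{spec}(L_{j,v_d}|_{\Sigma_{\Cc'}})$; the non-word-maximal pieces contribute spectrum in a disk $\{|z|\le\rho_0<1\}$ by the estimate above, while for the unique word-maximal component $\Cc_j$, of period $p_j$, the periodic Ruelle--Perron--Frobenius theorem (the cyclic-block description following Proposition \ref{prop.vp}, see \cite{ParryPollicott}) gives that $L_{j,v_d}|_{\Sigma_{\Cc_j}}$ has exactly $p_j$ simple eigenvalues on $\{|z|=e^{\Pr_{\Cc_j}(-v_d\Psi_d)}\}=\{|z|=1\}$, namely $e^{2\pi i l/p_j}$, $l=0,\dots,p_j-1$ --- the $p_j$-th roots of the simple leading eigenvalue of the mixing operator obtained by restricting $(L_{j,v_d}|_{\Sigma_{\Cc_j}})^{p_j}$ to one cyclic block --- with the rest of its spectrum in $\{|z|\le\rho_0<1\}$. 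Thus $L_{j,v_d}$ has a spectral gap. Since $s\mapsto L_{j,s}$ is an entire family of bounded operators on $F_\theta(\Sigma_{C_j})$, Kato's analytic perturbation theory then yields $\epsilon>0$ such that, for $|s-v_d|<\epsilon$, those $p_j$ eigenvalues persist, remain simple, depend analytically on $s$, and the remaining spectrum stays in a disk $\{|z|\le\rho<1\}$ (after shrinking $\epsilon$). Tracking the cyclic-block picture, the leading eigenvalue of $(L_{j,s}|_{\Sigma_{\Cc_j}})^{p_j}$ on one block is a non-vanishing analytic function $\Lambda(s)$ with $\Lambda(v_d)=1$, whose $p_j$-th roots are exactly these perturbed maximal eigenvalues; writing $\Lambda(s)=e^{p_j\Pr_\Cc(-s\Psi_d)}$ for the branch of the analytically continued pressure agreeing with the real pressure at $v_d$ gives the asserted form $e^{\Pr_\Cc(-s\Psi_d)}e^{2\pi i l/p_j}$, and this continuation is independent of $j$ because $\Pr_\Cc(-t\Psi_d)=\theta_{d/S}(t)$ for every word-maximal $\Cc$ (Lemma \ref{lem.comp1}).

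The main obstacle I expect is the bookkeeping in (3): making precise that the spectrum of the reducible operator $L_{j,s}$ is the union of the component spectra together with $\{0\}$, and obtaining the periodic Ruelle--Perron--Frobenius statement in exactly the form ``$p_j$-th roots of unity times a common analytic base'' in this augmented, non-mixing setting. These facts are standard in thermodynamic formalism, so once Proposition \ref{prop.main} is in hand --- ensuring the non-word-maximal components of $C_j$ are strictly subdominant at $s=v_d$ --- what remains is essentially this bookkeeping plus Kato's perturbation theory.
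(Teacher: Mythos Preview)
Your proposal is correct and follows essentially the same route as the paper: part (1) by direct unravelling of $L_{j,s}^n\chi(\dot 0)$, part (2) from the strict monotonicity of $t\mapsto\Pr_{\Cc'}(-t\Psi_d)$ together with the component decomposition of the spectrum, and part (3) from the block--triangular/component description of the spectrum (the paper cites Lemma~2 of \cite{pollicott.sharp} for this), the periodic Ruelle--Perron--Frobenius picture on $\Cc_j$, Kato perturbation theory, and Lemma~\ref{lem.comp1} for the $j$-independence of the pressure. The only place to be slightly more careful is your telescoping claim in (1): since $\Psi_d(\cdot)=\beta_o(\ev_1(\cdot),\ev(\cdot))$ is defined via a boundary point, you should say how it is interpreted on sequences that terminate at the $0$ state; the paper simply records this as ``direct calculation,'' so the convention is implicit there as well.
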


\begin{proof}
The first property follows from the definitions of the objects involved and by direct calculation. The second statement follows from the fact that, since $\P_d$ is cohomologous to a strictly positive function, the maps $s \mapsto \Pr_\Cc(-s\P_d)$ are strictly decreasing for every (not necessarily maximal) connected component $\Cc$. The third property follows from Lemma 2 of \cite{pollicott.sharp}. Indeed, this result shows that the maximal part of the spectrum of each $L_{j,v_d}$ is coming from the unique word maximal component $\Cc_j$ in the definition of $C_j$. This persists under small perturbations thanks to analytic perturbation theory (upper semi-continuity of the spectrum \cite{kato}) and the conclusion follows from Proposition 4.6 of \cite{ParryPollicott}. The fact that the pressure is independent of $C_j$ follows from Lemma \ref{lem.comp1}: the pressures are determined by the Manhattan curve which is independent of $C_j$.
\end{proof}

We also need to understand the spectrum of the operators $L_{j,s}$ as $s$ varies in $\C$. The following result is key in our proof of Theorem \ref{thm.2}.

\begin{proposition}\label{prop.specna}
Suppose that $d$ has non-arithmetic length spectrum.
Let $s = v_d + it$. Then the operators $L_{s,j}$ for $j=1,\ldots,m$ do not have $1$ as an eigenvalue unless $t=0$.
\end{proposition}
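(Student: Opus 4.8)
The plan is to reduce the statement to the word-maximal component $\Cc_j$ and then invoke the Ruelle--Perron--Frobenius dichotomy, the genuine difficulty lying in the constant that appears in that dichotomy.

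\textbf{Step 1 (reduction to $\Cc_j$).} From the pointwise bound $|L_{j,v_d+it}f(x)| \le L_{j,v_d}|f|(x)$ one gets $\mathrm{spr}(L_{j,v_d+it}) \le \mathrm{spr}(L_{j,v_d})$, and, exactly as in the proof of Proposition \ref{prop.sr1} (using \cite{pollicott.sharp}), the spectral radius of $L_{j,v_d+it}$ is attained on one of the recurrent components of $C_j$. The source $\ast$ contributes nothing; the sink $0$ contributes nothing, since its loop is excluded from the definition of $L_{j,s}$; each component $\Cc$ of $C_j$ other than $\Cc_j$ is not $-v_d\Psi_d$-maximal by Proposition \ref{prop.main}, so $\mathrm{spr}(L_{\Cc,v_d+it}) \le e^{\textnormal{P}_\Cc(-v_d\Psi_d)} < 1$ for every $t$; and $\Cc_j$, being the unique word-maximal (hence unique $-v_d\Psi_d$-maximal) component of $C_j$, has $\textnormal{P}_{\Cc_j}(-v_d\Psi_d)=0$, so $\mathrm{spr}(L_{\Cc_j,v_d})=1$. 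Thus it suffices to prove $\mathrm{spr}(L_{\Cc_j,v_d+it}) < 1$ for $t\neq 0$, where $L_{\Cc_j,s}$ denotes the transfer operator for $-s\Psi_d$ over $\Sigma_{\Cc_j}$.

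\textbf{Step 2 (the dichotomy).} Passing via $\sigma^{p_j}$ to a mixing subsystem of $\Sigma_{\Cc_j}$, standard transfer operator theory \cite{ParryPollicott} gives: either $\mathrm{spr}(L_{\Cc_j,v_d+it}) < 1$, or there is a constant $\theta_0\in\R$ with $-t\,S_n\Psi_d(x) \in n\theta_0 + 2\pi\Z$ for every $\sigma$-periodic $x\in\Sigma_{\Cc_j}$ of period $n$. I would rule out the second alternative for $t\neq 0$ as follows. As in the proof of Lemma \ref{lem.non-arith}, a periodic $x$ of period $n$ corresponds to a non-torsion $g=\ev(x_0,\dots,x_{n-1},x_0)$ with $S_n\Psi_d(x)=\ell_d[g]$, and, by homogeneity of $\ell_d$, iterating the loop multiplies both the length and the Birkhoff sum by the same factor. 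By Corollary \ref{cor.loops}, every $[h]\in\conj'$ has $[h^{kM}]=[(h^k)^M]$, for a fixed $M$ and all $k\ge1$, represented by a loop in $\Cc_j$ of length $n_{h,k}=kM\,\ell_{|\cdot|_S}[h]+O(1)$ (uniformly, since loops in a Cannon coding are geodesic) and $\ell_d$-length $kM\,\ell_d[h]$. Feeding these into the cohomology relation, the $\ell_d[h]$-terms cancel (again by homogeneity) and one is left with $\big(k\,n_{h,1}-n_{h,k}\big)\theta_0 \in 2\pi\Z$ for all $k$, where $k\,n_{h,1}-n_{h,k}$ is an integer equal to $k\big(n_{h,1}-M\ell_{|\cdot|_S}[h]\big)+O(1)$. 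If $n_{h,1}>M\ell_{|\cdot|_S}[h]$ for some $[h]\in\conj'$ — i.e.\ the chosen loop does not realise a shortest conjugacy representative — then for large $k$ this integer is nonzero and bounded in terms of $k$, whence $\theta_0\in2\pi\mathbb{Q}$; writing $\theta_0=2\pi p/q$, the cohomology relation forces $-t\,S_n\Psi_d(x)\in\tfrac{2\pi}{q}\Z$ for every periodic $x$, so $\Psi_d$ is arithmetic on $\Cc_j$, contradicting Lemma \ref{lem.non-arith}. This settles everything except the degenerate case in which the loops of Corollary \ref{cor.loops} always realise shortest representatives.

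\textbf{The main obstacle.} That degenerate case is exactly where the argument is delicate, and the culprit is the constant $\theta_0$: Lemma \ref{lem.non-arith} only says that the periodic Birkhoff sums of $\Psi_d$ on $\Cc_j$ avoid every subgroup $a\Z$, whereas $\mathrm{spr}(L_{\Cc_j,v_d+it})=1$ would only require them to avoid the coset $n\theta_0+a\Z$ — a priori a weaker constraint, corresponding to the transfer operator having a maximal eigenvalue of irrational phase. Closing this gap must combine the structural results of Section \ref{section.codingstructure} with the non-arithmeticity hypothesis to force $\theta_0$ to be a rational multiple of $2\pi$; the cleanest way to package it is as a standalone statement that non-arithmetic length spectrum implies $\Psi_d$ is \emph{not} cohomologous, on any word-maximal component, to a constant plus an $a\Z$-valued function — precisely the ``cohomological consequence of non-arithmeticity'' flagged in the introduction, and the technical heart of the matter. (One can at least observe that exact proportionality of loop lengths to powers would tie $\ell_d$ to $\ell_{|\cdot|_S}$ modulo a discrete subgroup of $\R$, which points toward $d$ being roughly similar to a word metric and hence sharing its arithmetic length spectrum — but making this last step airtight is exactly the subtle point.) The remaining ingredients, namely Step 1 and the passage to a mixing subsystem in Step 2, are routine.
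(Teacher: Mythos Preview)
Your Step 1 matches the paper exactly. In Step 2 you have correctly identified a genuine subtlety that the paper's short proof does not engage with: the paper simply invokes Lemma \ref{lem.non-arith} together with Proposition 4.8 of Parry--Pollicott, whereas you attempt the dichotomy directly. The standard Ruelle--Perron--Frobenius characterisation gives $\mathrm{spr}(L_{\Cc_j,v_d+it})=1$ if and only if $t\Psi_d$ is cohomologous to a \emph{constant plus} a $2\pi\Z$-valued function, while Lemma \ref{lem.non-arith} (via the paper's definition of ``non-arithmetic'') only excludes the zero-constant case. Your concern is not paranoia: on the full two-shift, $\Psi(x)=\beta+x_0$ with $\beta>0$ irrational is non-arithmetic in the paper's sense, yet $L_{-(h+2\pi i)\Psi}=e^{-2\pi i\beta}L_{-h\Psi}$ has spectral radius $1$ with peripheral eigenvalue $e^{-2\pi i\beta}\neq 1$. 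So the proposition as literally stated does not follow from Lemma \ref{lem.non-arith} alone, and your worry about $\theta_0$ is legitimate.

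That said, your attempt to force $\theta_0\in2\pi\mathbb Q$ via Corollary \ref{cor.loops} is working much harder than needed, and the ``degenerate case'' you flag is a red herring. The downstream application --- the analytic continuation in Theorem \ref{thm.2} --- only requires that $1$ not lie in the spectrum of $L_{j,v_d+it}$ for $t\neq 0$: for $\mathrm{Re}(s)>v_d$ one has $\sum_{n\ge1} L_{j,s}^n\chi(\dot{0})=L_{j,s}(I-L_{j,s})^{-1}\chi(\dot{0})$, and the right-hand side continues analytically to any $s$ with $1\notin\mathrm{spec}(L_{j,s})$, irrespective of the spectral radius. But ``$1$ is an eigenvalue'' is exactly the $\theta_0=0$ case of your dichotomy, i.e.\ $\Psi_d$ cohomologous on $\Sigma_{\Cc_j}$ to an $a\Z$-valued function with no additive constant --- precisely what Lemma \ref{lem.non-arith} rules out. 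So the clean resolution is to weaken the conclusion to ``$1\notin\mathrm{spec}(L_{j,v_d+it})$ for $t\neq0$''; your Step 2 then reduces to a one-line appeal to Lemma \ref{lem.non-arith} and the obstacle you flagged disappears entirely.
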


\begin{proof}
By Lemma 2 of \cite{pollicott.sharp}  the spectral radius of $L_{s,j}$ is the maximum of the spectral radii over the components making up $C_j$. On the non-word maximal (or equivalently non $-v_d\P_d$ maximal components by Proposition \ref{prop.main}) the restriction of $-s\Psi_d$ has pressure strictly smaller than $1$ \cite[Theorem 4.5]{ParryPollicott}. Further if we look at the single word maximal component in $C_j$, then thanks to the assumption that $d$ has non-arithmetic length spectrum, the restriction of $\P_d$ to this component is non-arithmetic by Lemma \ref{lem.non-arith} and so by Proposition 4.5 of \cite{ParryPollicott} (see also \cite{pollicott}) $1$ can not be an eigenvalue for the operator on this component also.
\end{proof}

 Using this result we can prove Theorem \ref{thm.2}.
\begin{proof}[proof of Theorem \ref{thm.2}]
Let $\Psi_d$ be the H\"older potential encoding $d$ on a Cannon coding for $\G, S$ and assume $d$ has non-arithmetic length spectrum.  Let $\epsilon >0$ be the constant introduced in $(3)$ of Proposition \ref{prop.sr1}. Using equation (\ref{eq.5}) and part (2) of Proposition \ref{prop.sr1} we can write, for $\mathrm{Re}(s) > v_d -\epsilon$,
\begin{equation} \label{eq.2}
\eta(s) = \sum_{n=1}^\infty \sum_{j=1}^m L_{j,s}^n \chi(\dot{0}) + \beta(s)
\end{equation}
where  $\beta(s)$ is a function that is analytic on the half plane $\mathrm{Re}(s) >  v_d - \epsilon$.
To see this, note that by Lemma \ref{lem.disjointmc}  finite paths in $\Gc$ can enter at most one word maximal (or $-v_d \P_d$ maximal) component. In particular $\{x \in \G : |x|_S = n\} = \bigcup_{j=1}^m M_j(n)$ and if $x \in M_i(n) \cap M_j(n)$ for some $i,j$ then the path corresponding to $x$ must be contained in components for which the corresponding transfer operator for $-v_d \P_d$ has spectral radius strictly smaller than $1$. 
It now follows from (2) of Proposition \ref{prop.sr1} that $\eta$ is analytic and non-zero on $\mathrm{Re}(s) > v_d$.

Also, by (3) of Proposition \ref{prop.sr1} for a sufficiently small neighbourhood $U \ni\delta$ and $s \in U$ we can write
\[
L_{j,s}^n\chi(\dot{0}) = e^{n\Pr_\mathcal{C}(-s\Psi_d)} \sum_{j \in \Z/p_j\Z} e^{2\pi i ln/p_j} Q_{j,l}(s)\chi(\dot{0}) + O(\theta^n)
\]
where $0<\theta <1$ is independent of $s \in U$ and each $Q_{j,l}$ is an analytic projection valued operator on $U$ that projects a function to the simple eigenspace for the eigenvalue $e^{\text{P}_\mathcal{C}(-s\Psi_d)} e^{2\pi i l/p_j}$. It follows from this expression that if $\eta$ has a pole at $s=v_d$ then it is simple. Note that, since \[
\sum_{|x|_S= n} e^{-v_d d(o,x)}
\]
is uniformly bounded away from zero and infinity for all $n \ge 1$, $\eta$ must have a pole at $s=v_d$ and so $\eta$ has a simple pole at $s= v_d$. It is easy to check that this pole has strictly positive residue. This discussion shows there exists an extension on $\eta$ to a neighbourhood of $v_d$ that is analytic apart from a simple pole at $s = v_d$.

To prove that we can find an extension with no more poles on the line $\mathrm{Re}(s) = v_d$ it suffices, by equation (\ref{eq.2}) to show that for all $\mathrm{Re}(s) = v_d$ with $s \neq v_d$, each $L_{j,s}$ does not have $1$ as an eigenvalue. This follows from Proposition \ref{prop.specna}.
\end{proof}
The first statement in Theorem \ref{thm.1} follows immediately from Theorem \ref{thm.2} and the Wiener-Ikehara Tauberian Theorem. The second statement involving the badly approximable condition needs further work. We require more precise bounds on the spectral radii of the operators $L_{j,s}$ defined above. To obtain such bounds we use the work of Dolgopyat: it was shown in \cite{dolgopyat} that if a H\"older function over a mixing subshift of finite type satisfies some form of badly approximable condition then the corresponding classically defined transfer operator satisfies a quantifiable rate of decay/growth under iteration. This work allows us to prove the following.
\begin{proposition}\label{prop.dolg}
Let $L_{j,s}$ be the operators defined above so that $\text{P}_\Cc(-v_d\Psi_d) =0$. Suppose that there exist $[x], [y] \in \conj'$ such that $\ell_d[x]/\ell_d[y]$ is badly approximable. Then there exist constants $t_0 \ge 1$ and $ \epsilon, l, C_1, C, > 0$ such that if $s$ satisfies $\mathrm{Re}(s) > v_d - \epsilon$  and $|\mathrm{Im}(s)| \ge t_0$ and $m \ge 1$ then
\[
\|L_{j,s}^{2Nm} \| \le C_1 |\mathrm{Im}(s)| e^{2nm\text{P}_\Cc(-s\Psi_d)} \left( 1 - \frac{1}{|\mathrm{Im}(s)|^l}\right)^{m-1}
\]
where $|\mathrm{Im}(s)| \ge t_0$ and $N = \lfloor C \log|\mathrm{Im}(s)|\rfloor$.
\end{proposition}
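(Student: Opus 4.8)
The plan is to reduce the asserted bound, via Lemma~2 of \cite{pollicott.sharp}, to a Dolgopyat-type contraction estimate for the transfer operator restricted to the unique word-maximal component of $C_j$, and then to feed in the badly-approximable hypothesis through Lemma~\ref{lem.ba}. Throughout write $b=\mathrm{Im}(s)$ and recall $N=\lfloor C\log|b|\rfloor$.

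\textbf{Step 1 (reduction to the word-maximal component).} By construction $C_j$ contains exactly one word-maximal component $\Cc_j$, which by Proposition~\ref{prop.main} is also its unique $-v_d\Psi_d$-maximal component. The transfer operator $L_{j,s}$ on $F_\theta(\Sigma_{C_j})$ is block-triangular along the component order, so by Lemma~2 of \cite{pollicott.sharp} its iterates are controlled, up to a polynomial-in-$m$ factor, by the iterates of the transfer operators of the irreducible components of $C_j$. Every component other than $\Cc_j$ has the pressure of the restriction of $-\mathrm{Re}(s)\Psi_d$ strictly below $\mathrm{P}_\Cc(-\mathrm{Re}(s)\Psi_d)$ for $\mathrm{Re}(s)$ near $v_d$ (Theorem~4.5 of \cite{ParryPollicott}), so after shrinking $\epsilon$ their contribution is dominated by $e^{2Nm\,\mathrm{P}_\Cc(-s\Psi_d)}$ times an exponentially small factor and is harmless. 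Hence it is enough to bound iterates of the transfer operator $L_{\Cc_j,s}$ on $F_\theta(\Sigma_{\Cc_j})$. If the period $p=p_j$ exceeds $1$, split $\Sigma_{\Cc_j}=\bigsqcup_{k=1}^{p}\Sigma_k$ with each $(\Sigma_k,\sigma^{p})$ mixing; then $L_{\Cc_j,s}^{p}$ preserves every $F_\theta(\Sigma_k)$ and acts there as the transfer operator for $\sigma^{p}$ with the H\"older potential $-sS_p\Psi_d$, whose Birkhoff sums still record $-s$ times a $d$-length up to a bounded error. We have thereby reduced to a mixing subshift of finite type, at the cost of replacing $N$ by a multiple of itself (still $O(\log|b|)$).

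\textbf{Step 2 (the Diophantine input and Dolgopyat's estimate).} By Lemma~\ref{lem.ba}, the hypothesis that $\ell_d[x]/\ell_d[y]$ is badly approximable for some $[x],[y]\in\conj'$ is equivalent to the existence of two periodic orbits in $\Sigma_{\Cc_j}$ whose ratio of $\Psi_d$-Birkhoff sums is badly approximable. This is exactly the Diophantine non-joint-integrability condition, with a quantitative polynomial rate, needed to run Dolgopyat's argument \cite{dolgopyat} on the mixing system produced in Step~1. Carrying out that argument uniformly for $\mathrm{Re}(s)$ in a small strip about $v_d$ --- the required uniform Lasota--Yorke and $L^2$ estimates for the normalized operator being furnished by the analytic perturbation theory already invoked in Proposition~\ref{prop.sr1}(3), cf.~\cite{kato} --- produces constants $C,l>0$ and $t_0\ge1$ such that, with $N=\lfloor C\log|b|\rfloor$ and $|b|\ge t_0$,
\[
\bigl\| e^{-2N\,\mathrm{P}_\Cc(-s\Psi_d)}\,L_{\Cc_j,s}^{2N}f \bigr\|_{(1,b)} \le \Bigl(1-\tfrac{1}{|b|^{l}}\Bigr)\,\|f\|_{(1,b)}
\]
in the Dolgopyat norm $\|f\|_{(1,b)}=\max\{\,|f|_\infty,\ |f|_\theta/|b|\,\}$; the even exponent $2N$ reflects that Dolgopyat's method controls the $L^2$ behaviour of $L^N$ via a Cauchy--Schwarz step.

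\textbf{Step 3 (iteration, and the main difficulty).} Iterating the displayed inequality over $m-1$ blocks of length $2N$, bounding the first block crudely by the operator norm, and converting between $\|\cdot\|_\theta$ and $\|\cdot\|_{(1,b)}$ by the elementary inequalities $\|f\|_{(1,b)}\le\|f\|_\theta$ and $\|f\|_\theta\le C_2|b|\,\|f\|_{(1,b)}$ for $|b|\ge1$ (the source of the prefactor $C_1|b|$), we obtain
\[
\|L_{\Cc_j,s}^{2Nm}\| \le C_1|b|\;e^{2Nm\,\mathrm{P}_\Cc(-s\Psi_d)}\Bigl(1-\tfrac{1}{|b|^{l}}\Bigr)^{m-1},
\]
and combining with Step~1 gives the asserted bound for $L_{j,s}$, the period $p$ and the finitely many lower-order components being absorbed into the constants and a final shrinking of $\epsilon$. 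The substantive part of the proof is Step~2: running Dolgopyat's construction in the present setting, and in particular (a) obtaining all the cone- and norm-contraction estimates \emph{uniformly} in $s$ over the strip $\mathrm{Re}(s)\in(v_d-\epsilon,v_d+\epsilon)$ rather than only on the line $\mathrm{Re}(s)=v_d$, and (b) extracting the \emph{polynomial} rate $1-|b|^{-l}$ from a Diophantine condition on a single pair of periods --- weaker than a full uniform-non-integrability hypothesis --- which is precisely what forces $N$ to grow like $\log|b|$ and what accounts for the $|b|$-dependence in the final estimate.
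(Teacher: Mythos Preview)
Your proposal is correct and follows essentially the same route as the paper's own proof: reduce from $L_{j,s}$ to its unique word-maximal component via Lemma~2 of \cite{pollicott.sharp} and the pressure gap on the remaining components, feed the badly-approximable hypothesis into Dolgopyat's theorem through Lemma~\ref{lem.ba}, and collect the resulting contraction estimate. Your Steps~1--3 simply make explicit the period-reduction to a mixing subshift and the Dolgopyat-norm bookkeeping that the paper leaves to the references \cite{dolgopyat} and \cite{pollicott.sharp.2}.
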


\begin{proof}
If we additionally assume $\Sigma_{C_j}$ is a mixing subshift of finite type (i.e. $\Cc_j$ is a single strongly connected component) this result follows from the work of Dolgopyat \cite{dolgopyat} (see also \cite{pollicott.sharp.2}) and Lemma \ref{lem.ba}: indeed this lemma shows that the operator satisfies the conditions necessary to apply Dolgopyat's result. If $\Sigma_{\Cc_j}$ were transitive but not mixing the same result holds (and is easy to deduce from the mixing case). In the general case by Lemma 2 of \cite{pollicott.sharp}  each $L_{j,s}$ is quasi-compact and the isolated eigenvalues of each $L_{j,s}$ come from the components in each $C_j$. That is, the maximal part of the spectrum of $L_{j,s}$ is either coming from a non word maximal component or the word maximal component in the definition of $L_{j,s}$. The contribution to the spectrum coming from a non word maximal component is insignificant if we assume that $\epsilon >0$ is taken to be small enough: indeed, then on a non word maximal component the pressure of the restriction of $-s\Psi_d$ is strictly smaller than $0$. We therefore can apply Dolgopyats result to the maximal component in $\Cc_j$ and deduce the result.
\end{proof}
We can now prove Theorem \ref{thm.1}.
\begin{proof} [proof of Theorem \ref{thm.1}]
As mentioned above we need only prove the second statement in the theorem.
Following the same proof as Theorem \ref{thm.2} but using Proposition \ref{prop.dolg} shows that $\eta$ is analytic on a neighbourhood of the form
\[
\left\{ s : \mathrm{Re}(s) > v_d - \frac{1}{|\mathrm{Im}(s)|^\tau}, \  |\mathrm{Im}(s)| \ge t_0 \right\}
\]
for some $\tau, t_0 > 0$. More specifically we can follow the proof carried out in Section 2 of \cite{pollicott.sharp.2} that was used to study analytic extensions of zeta functions associated to hyperbolic flows: the adaptation of this proof is easy so we will not present it here. Moreover we deduce that there exist $C>0$ and $\tau'>0$ such that $|\eta(s)| \le C|\mathrm{Im}(s)|^{\tau'} $. Now that we have this description of the domain of analyticity of $\eta$ and we also know that $\eta$ has a simple pole at $v_d$ and no poles on the line $\mathrm{Re}(s) = v_d$, we can apply the classical proof from analytic number theorem to obtain the result. See  \S3 \cite{pollicott.sharp.2} or sections \S4 and \S5 \cite{cantrell.sharp} for the proof.
\end{proof}

\section{Applications: proofs}
In this section we prove the results stated in Sections 1.2 and 1.3. We begin by proving a mixing result for the Mineyev flow.
\subsection{Mixing of the Mineyev flow} \label{section.mixing}
By the coding theorem (Theorem \ref{thm.enhanced}) the Mineyev flow $\mathcal{F}_\k$ associated to a strongly hyperbolic metric $d \in \mathcal{D}_\G$ admits a coding by the suspension of a transitive subshift of finite type $\text{Sus}(\overline{\Sigma}_0,r_0)$. In particular, the subshift $\overline{\Sigma}_0$ is obtained from a maximal component $\mathcal{C}_0$ for $-v_{d}\Psi_d$. The aim of this section is to understand when the flow $(\overline{\Sigma}_0, r_0)$ is weak-mixing.

\begin{definition}
We say that $X_0=\text{Sus}(\overline{\Sigma}_0, r_0)$ is weak mixing if, when a continuous function $W : X_0 \to S^1$ and real number $a \in \R$ satisfy
\[
W \sigma_t   = e^{2\pi i a t} W
\]
then $a=0$ and $W$ is constant.
\end{definition}
When $X_0$ is not weak mixing, there exist H\"older continuous $w: \overline{\Sigma}_0 \to \C$, $a \in \R$ with $w\sigma = e^{2\pi i a r_0} w$ and so for any $x \in \overline{\Sigma}_0$ with $\sigma^{n}(x) = x$, $w(x) = w(\sigma^n(x)) = e^{2\pi i a S_nr_0(x)} w(x)$ (see \cite[Proposition 6.2]{ParryPollicott}). By Livsic's theorem this implies that $r_0$ is cohomologous to a function taking values in $\frac{1}{a} \Z$. In particular we see that, if $r_0$ is not cohomologous to a function taking values in $\frac{1}{a} \Z$, then $X_0$ is weak mixing.\\

Using this observation we can prove Theorem \ref{thm.mixing}
\begin{proof}[proof of Theorem \ref{thm.mixing}]
To prove this theorem we simply need to combine Proposition \ref{prop.loops} and Lemma \ref{lem.non-arith} with Theorem \ref{thm.enhanced}. Indeed, if $d$ has non-arithmetic length spectrum then by Lemma \ref{lem.non-arith} the function $\Psi_d$ is non-arithmetic on each word maximal (and hence each $-v_d \Psi_d$ maximal) component $\Cc$ in a fixed Cannon coding for $\G$, $S$. This implies that for each $n \ge 1$ then $n$th Birkhoff sum $S_n\Psi_d$ is also non-arithmetic since the non-lattice property is a cohomological invariant. Since the suspension flow modelling the Mineyev flow for $d$ is the suspension of $\Sigma_\C$ over a $S_N\Psi_d$ for some $N \ge 1$ we deduce that it is weakly mixing, concluding the proof.
\end{proof}

\begin{remark}
We can comment on the rate of mixing of the Mineyev flow when the base metric $d$ satisfies the bad approximability condition from Theorem \ref{thm.1}: indeed then the work of Dolgopyat \cite{dolgopyat} applies and shows that the flow is rapid mixing. We leave the details to the reader.
\end{remark}

\subsection{Correlation numbers}

In this section we prove Theorem \ref{thm.cor}. Let $d,d_\ast \in \Dc_\G$ satisfy the hypothesis of Theorem \ref{thm.cor}.
Fix a finite generating set $S$ and Cannon coding for $\G$, $S$. The proof is centered around the study of the function
\[
\o(s,t) = \exp \sum_{x\in\G} |x|_S^{-1} e^{-sd(o,x) - (\xi + it)(d_\ast(o,x) - d(o,x))}
\]
where $\xi >0$ is the unique solution to $\theta_{d_\ast/d}'(\xi) = -1$ where $\theta_{d_\ast/d}$ is the Manhattan curve for $d,d_\ast$.
In fact we can (using the Implicit Function Theorem) realise $\xi$ as the unique solution to
\[
\int_{\Sigma_\Cc} (\P_{d_\ast} - \P_d)  \ d\mu_{-\xi \P_{d_\ast} - \theta_{d_\ast/d}(\xi)\P_d} = 0
\]
for any word maximal component $\Cc$. Here $\mu_{-\xi \P_{d_\ast} - \theta_{d_\ast/d}(\xi)\P_d}$ is the equilibrium state for $-\xi \P_{d_\ast} - \theta_{d_\ast/d}(\xi)\P_d$ on $\SS_\Cc$ obtained from Proposition \ref{prop.vp}. To see this one notes that $\theta_{d_\ast/d}(t) = s$ is the solution to $\Pr_\Cc( - s\P_d - t\P_{d_\ast}) = 0$ by Theorem 5.4 of \cite{cantrell.tanaka.2} and Proposition \ref{prop.main}. In particular, $\xi \in (0,1)$ thanks to the assumption that $v_d = v_{d_\ast} =1$. To see this, note that by Theorem 1.2 in \cite{cantrell.tanaka.1} we have $\theta_{d_\ast/d}'(0) < - 1 < \theta_{d_\ast/d}'(1)$.

\begin{proposition}\label{prop.dom2}
There exists $\alpha > 0$ such that the function $\o$ satisfies:
\begin{enumerate}
\item $\o$ is analytic and non-zero in the region $\{ s: \mathrm{Re}(s) > \alpha \} \times \R$;
\item  for $t_0 \neq 0$, $\o$ is analytic and non-zero in a neighbourhood of $\{ s : \mathrm{Re}(s) \ge \alpha \} \times \{ t_0\}$;
\item  for each $t_0 \in \R$, $\o$ is analytic and non-zero in a neighbourhood of $\{s : \mathrm{Re}(s) = \alpha, \mathrm{Im}(s) \neq 0\} \times \{t_0\}$; and,
\item in a neighbourhood of $(\alpha,0)$ the function $\o(s,t)$ takes the form
\[
\frac{\phi(s,t)}{s - s(t)}
\]
where $s(t), \phi$ are analytic and non-zero. 
\end{enumerate}
\end{proposition}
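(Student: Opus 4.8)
We plan to study $\o$ through its logarithm. Write $\Xi(s,t):=\sum_{x\in\G}|x|_S^{-1}e^{-sd(o,x)-(\xi+it)(d_\ast(o,x)-d(o,x))}=\log\o(s,t)$; since $\o=\exp\Xi$, the function $\o$ is analytic and nowhere vanishing on any open set where $\Xi$ is analytic, so (1)--(3) concern only the analyticity of $\Xi$. Both $d$ and $d_\ast$ are strongly hyperbolic, so Lemma \ref{lem.holder} supplies H\"older potentials $\Psi_d,\Psi_{d_\ast}$ on the fixed Cannon coding with Birkhoff sums $d(o,\ev_n(\cdot))+O(1)$ and $d_\ast(o,\ev_n(\cdot))+O(1)$. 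Set $\Phi_{s,t}:=-s\Psi_d-(\xi+it)(\Psi_{d_\ast}-\Psi_d)$ and, exactly as in Proposition \ref{prop.sr1}, let $L_{j,s,t}$ denote the transfer operator with weight $e^{\Phi_{s,t}}$ on the reduced coding $C_j$ retaining only the word-maximal component $\Cc_j$. Splitting the $n$-sphere according to which word-maximal component a path meets --- using Lemma \ref{lem.disjointmc}, so that a path meets at most one --- we get, for $\mathrm{Re}(s)>\alpha$,
\[
\Xi(s,t)=\sum_{j=1}^m\sum_{n=1}^\infty\frac1n L_{j,s,t}^n\chi(\dot{0})+\beta(s,t),
\]
where $\chi=\mathbf 1_{[\ast]}$ and $\beta$ extends analytically to a half-plane $\mathrm{Re}(s)>\alpha-\epsilon$ (the paths meeting no word-maximal component have spectral radius $<1$). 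Here $\alpha$ is the abscissa of the $t=0$ series; by the reasoning behind Lemma \ref{lem.comp1} and Proposition \ref{prop.main} it is the unique solution of $\mathrm{P}_{\Cc}(\Phi_{\alpha,0})=0$, equivalently $\alpha=\xi+\theta_{d_\ast/d}(\xi)$, and $\alpha\in(0,1)$ since $\xi\in(0,1)$ as noted before the Proposition.

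Assertions (1)--(3) reduce to spectral-radius bounds on $L_{j,s,t}$. For $\mathrm{Re}(s)>\alpha$ the domination $|L_{j,s,t}f|\le L_{j,\mathrm{Re}(s),0}|f|$ (note $\mathrm{Re}\,\Phi_{s,t}=\Phi_{\mathrm{Re}(s),0}$) together with strict monotonicity of the pressure in the $\Psi_d$-direction gives $\rho(L_{j,s,t})<1$, so the series above converges locally uniformly and $\Xi$ is analytic; this is (1), and (2) off the critical line. On $\mathrm{Re}(s)=\alpha$ with $(s,t)\ne(\alpha,0)$ we run the argument of Lemma \ref{lem.non-arith} and Proposition \ref{prop.specna}: by Lemma 2 of \cite{pollicott.sharp} the peripheral spectrum of $L_{j,s,t}$ comes from the block $\Cc_j$, and $\rho(L_{j,s,t})=1$ would force, by Propositions 4.8 and 6.2 of \cite{ParryPollicott}, the imaginary part $(t-\mathrm{Im}(s))\Psi_d-t\Psi_{d_\ast}$ of $\Phi_{s,t}$ to be cohomologous on $\Sigma_{\Cc_j}$ to a lattice-valued function; by Corollary \ref{cor.loops} (and the identity $S_n\Psi_d(x)=\ell_d[\ev(x_0,\dots,x_{n-1},x_0)]$ on periodic orbits, as in the proof of Lemma \ref{lem.non-arith}) this in turn forces $(t-\mathrm{Im}(s))\ell_d[\g]-t\ell_{d_\ast}[\g]\in a\Z$ for all $[\g]\in\conj$ and some $a\in\R$. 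Since the pair $(t-\mathrm{Im}(s),-t)$ is nonzero whenever $(s,t)\ne(\alpha,0)$ on this line, this contradicts the independence of $d$ and $d_\ast$; hence $\rho(L_{j,s,t})<1$, and by upper semicontinuity of the spectrum the bound persists on a neighbourhood. This gives (2) and (3).

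For (4) we analyse $\Xi$ near $(\alpha,0)$. By the two-parameter analogues of Lemma \ref{lem.comp1} and Proposition \ref{prop.main}, all word-maximal components share a single pressure $P(s,t):=\mathrm{P}_{\Cc}(\Phi_{s,t})$, which is analytic, vanishes at $(\alpha,0)$, and is strictly decreasing in $s$; the Implicit Function Theorem then yields an analytic curve $s=s(t)$ with $s(0)=\alpha\ne 0$ and $P(s(t),t)=0$. Using the spectral decomposition of each $L_{j,s,t}$ (leading eigenvalues $e^{P(s,t)}e^{2\pi i l/p_j}$, $l=0,\dots,p_j-1$, simple, with the rest uniformly inside a disc of radius $<1$, as in Proposition \ref{prop.sr1}(3)) and the identity $\sum_n n^{-1}z^n=-\log(1-z)$, only the trivial root of unity contributes a singularity, so that
\[
\Xi(s,t)=-c\,\log\bigl(s-s(t)\bigr)+g(s,t),
\]
with $g$ analytic near $(\alpha,0)$ and $c=\sum_{j=1}^m Q_{j,0}(s(t),t)\chi(\dot{0})$ (here $1-e^{P(s,t)}=u(s,t)(s-s(t))$ with $u$ analytic and non-vanishing). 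Exponentiating, $\o(s,t)=(s-s(t))^{-c}e^{g(s,t)}$. To conclude (4) one checks $c=1$: computing $Q_{j,0}\chi(\dot{0})$ in terms of the leading eigenfunction and the conformal eigenmeasure of $\Cc_j$, and invoking that at the critical parameter this measure is a probability measure together with the growth-tightness comparison of Lemma \ref{lem.gqtprod} between $\ast$-paths and loops --- equivalently, that $\sum_{|x|_S=n}e^{-s(t)d(o,x)-(\xi+it)(d_\ast(o,x)-d(o,x))}\to 1$ as $n\to\infty$ --- gives $\o(s,t)=\phi(s,t)/(s-s(t))$ with $\phi=e^{g}$ analytic and non-zero.

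The hard part will be twofold. First, as in Proposition \ref{prop.sr1}, one must set up the operators $L_{j,s,t}$ on the reducible codings $C_j$ so that the peripheral spectrum is controlled uniformly for $t$ in compact sets and reduces cleanly to the word-maximal block, which rests on Lemma 2 of \cite{pollicott.sharp} together with analytic perturbation theory \cite{kato}. Second, and more delicate, is pinning down the coefficient $c=1$, so that the singularity at $(\alpha,0)$ is a genuine simple pole with non-vanishing numerator: this requires tracking the normalisation of the Patterson--Sullivan/conformal measures through the augmentation (the auxiliary state $\dot{0}$) and through the several, possibly periodic, word-maximal components. The remaining steps are bookkeeping modelled on the proofs of Theorem \ref{thm.2} and Proposition \ref{prop.specna}.
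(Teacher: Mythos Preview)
Your treatment of (1)--(3) is essentially the paper's, and is fine. The substantive gap is in (4). You write that ``by the two-parameter analogues of Lemma \ref{lem.comp1} and Proposition \ref{prop.main}, all word-maximal components share a single pressure $P(s,t)$,'' but this is precisely the point that needs work and the one-parameter argument does not extend as stated. Lemma \ref{lem.comp1} identifies the common pressure as the abscissa of a real Poincar\'e series; for real $t\ne 0$ the weight $\Phi_{s,t}$ is complex-valued, so there is no Manhattan curve to invoke. The paper's device is to complexify $t$ as well: each leading eigenvalue, hence each $\Pr_j(s,t)$, extends bi-analytically to a neighbourhood of $(\alpha,0)$ in $\C^2$ (analytic perturbation theory plus Hartogs' theorem on separate analyticity). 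On the totally real slice $\{s\in\R,\ t\in i\R\}$ the potential becomes real and corresponds to a genuine metric $(s-\xi-it)d+(\xi+it)d_\ast\in\Dc_\G$ for $(s,t)$ near $(\alpha,0)$ (this positivity requires a check, cf.\ \cite{oregon-reyes}); on that slice the Manhattan-curve argument of Lemma \ref{lem.comp1} applies verbatim and shows all $\Pr_j$ coincide. Agreement on a maximal totally real submanifold then forces agreement on the whole complex neighbourhood by the identity principle in several variables. Without this step you have $m$ a priori distinct analytic curves $s_j(t)$ and no single $s(t)$.

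On your ``hard part'' concerning the exponent $c=\sum_j Q_{j,0}\chi(\dot 0)$: you are right to isolate it, and in fact the paper does not address it --- it simply asserts that (4) follows from the spectral description once the pressures agree. Your proposed justification, via the claim that $\sum_{|x|_S=n}e^{-s(t)d(o,x)-(\xi+it)(d_\ast(o,x)-d(o,x))}\to 1$, is not correct: this quantity is bounded away from $0$ and $\infty$ but has no reason to tend to $1$, and growth quasi-tightness gives comparability, not an exact limit. What actually matters downstream (the proof of Theorem \ref{thm.cor} passes immediately to $\partial_s\log\o$) is that the logarithmic derivative has a simple pole at $s=s(t)$ with nonzero residue and analytic remainder; the value of $c$ only rescales the constant $C$ in the asymptotic. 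So you should not try to force $c=1$ via Patterson--Sullivan normalisations --- drop that line --- and instead state the conclusion of (4) at the level needed for the application.
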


\begin{proof}
Fix a Cannon coding $\Sigma_A$ for $\G, S$ where $S$ is any generating set for $\G$.
Let $\alpha$ be the unique real number such that  $(\alpha - \xi)d(o,x) + \xi d_\ast(o,x)$ has exponential growth rate $1$.
We necessarily have $\alpha -  \xi > 0$ since $\xi \in (0,1)$ and $\theta_{d_\ast/d}(\xi) = \alpha - \xi$.
In particular $d_\alpha = (\alpha - \xi)d(o,x) + \xi d_\ast(o,x)$ is a metric belonging to $\Dc_\G$ (Lemma 4.1 of \cite{oregon-reyes}) and the corresponding Busemann cocycle is a genuine cocycle obtained as a limit. This metric is represented by the H\"older continuous function $\P' = (\alpha - \xi)\P_d + \xi \P_{d_\ast} : \Sigma_A \to \R$. Following the same proof as Proposition \ref{prop.main} we see that the $\P'$-maximal components are precisely the word maximal components. 
Consider the transfer operators $L_{j,s,t} : F_\theta(\Sigma_{C_j}) \to F_\theta(\Sigma_{C_j})$ given by
\[
L_{j,s,t} f(x) = \sum_{\sigma(y) = x, y \neq \dot{0}} e^{-s\P_d(y) - (\xi + it)(\P_{d_\ast}(y) - \P_d(y))} f(y)
\]
and let $\Pr_j(s,t)$ denote the spectral radii of these operators. 
Then, $\alpha$ is the unique real number with
$\Pr_j(-\alpha \P_d - \xi(\P_{d_\ast} - \P_d)) = 0$ for each $j$. 
On the non-word maximal components, the pressures of (the restrictions of) $\P'$ are strictly smaller than $0$. We can therefore write
\begin{equation}\label{eq.bianaly}
\o(s,t) = \exp \sum_{n=1}^\infty \sum_{j=1}^m \frac{ L_{j,s,t}^n\chi(\dot{0})}{n} +\beta(s,t)
\end{equation}
where $\beta(s,t)$ is bi-analytic on $\{s: \mathrm{Re}(s) > \alpha - \epsilon\} \times \R$ for some $\epsilon >0$ (see the proof of Theorem \ref{thm.1}).
Since for each $j$,  $\Pr_j(s,t) \ge \Pr_j(s,0)$, part (1) follows.

For parts (2) and (3)  we know that $\o(s,t)$ has an analytic extension to a neighbourhood of $(\alpha+iu, t)$ as long as the function $-u\P_d - t (\P_{d_\ast} - \P_d)$ restricted to the component $C_j$ is not cohomologous to a function taking values in $2\pi\Z$. This can only happen when $u=t=0$ by Lemma \ref{lem.non-arith} since $d$ and $d_\ast$ are independent.

Part (4) requires a little more work. We need to show that for all $(s,t)$ close $(\alpha,0)$ $\Pr_j(s,t)$ is independent of $j$. To do this we note that, by Hartogs' Theorem (Theorem 1.2.5 of \cite{Hartogs}) each $\Pr_j(s,t)$ is bi-analytic in a neighbourhood of $(\alpha,0)$. Following the same proof as Lemma \ref{lem.comp1} we can show that $\Pr_j(s,t)$ is independent of $j$ when $s \in \R$ and $t \in i \R$ (i.e. $t$ is imaginary).
In particular, we consider the Manhattan curve for the pair $|\cdot|_S$ and $d_{s,t} = (s + t - \xi)d(o,x) + (\xi - t) d_\ast(o,x)$ where $S$ is a finite generating set for $\G$. Note that for all $(s,t)$ close to $(\alpha,0)$, $d_{s,t}$ remains a metric (for $s,t \in \R$).
We then consider the Poincar\'e series
\[
\sum_{x\in\G} e^{-z|x|_S - d_{s,t}(o,x)}
\]
and, by following the same proof as in Lemma \ref{lem.comp1}, see that each $\Pr_j(s,t)$ is the abscissa of converges in the variable $z$ as $s,t$ remain fixed, i.e. the $\Pr_j(s,t)$ agree for $(s,t)$ close to $(\alpha,0)$ with $s \in \R$ and $t\in i\R$.
Due to bi-analyticity and
Hartogs' Theorem this guarantees that the $\Pr_j(s,t)$ coincide on a neighbourhood of $(\alpha, 0)$. We will write $\Pr(s,t)$ instead of $\Pr_j(s,t)$ when $(s,t)$ belong to such a neighbourhood. We now use the spectral description of the operators $L_{j,s,t}$. By the above discussion there exists a neighbourhood  $U$ of $(\alpha,0)$ such that for $(s,t) \in U$ each $L_{j,s,t}$ has $p_j$ simple maximal eigenvalues of the form $e^{\Pr(s,t)}e^{2\pi i l/p_j}$ for $l=0,\ldots, p_j-1$. Using this spectral description and substituting into equation (\ref{eq.bianaly}), the claim follows.
\end{proof}

\begin{lemma}\label{lem.sym}
Let $s(t)$ denote the function from the previous proposition. Then, $\mathrm{Re}\, s(t)$ is an even function, $\mathrm{Im}\,s(t)$ is an odd function and furthermore,
\begin{enumerate}
\item $\nabla \mathrm{Re}\,s(0) = 0$;
\item $\nabla \mathrm{Im}\,s(0) = 0$;
\item $\nabla^2 \mathrm{Re}\,s(0)$ is negative definite; and,
\item $\nabla^2 \mathrm{Im}\,s(0) = 0$.
\end{enumerate}
\end{lemma}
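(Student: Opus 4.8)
Throughout, since $t$ is a single real variable I read $\nabla$ and $\nabla^2$ at $0$ as the first and second derivatives in $t$, so the four assertions are $(\mathrm{Re}\,s)'(0)=0$, $(\mathrm{Im}\,s)'(0)=0$, $(\mathrm{Re}\,s)''(0)<0$ and $(\mathrm{Im}\,s)''(0)=0$. The plan is to work directly with the characterisation of $s(t)$ coming out of the proof of Proposition \ref{prop.dom2}. There, on a neighbourhood of $(\alpha,0)$ the spectral radii $\mathrm{P}_j(s,t)$ of the operators $L_{j,s,t}$ all coincide; write $\mathrm{P}(s,t)=\Pr_{\mathcal{C}}(-s\Psi_d-(\xi+it)(\Psi_{d_\ast}-\Psi_d))$ for their common value, where $\mathcal{C}$ is any fixed word maximal component. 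Inspecting the spectral decomposition in that proof, the pole of $\o$ in the $s$-variable sits exactly where the leading eigenvalue $e^{\mathrm{P}(s,t)}$ equals $1$ (the non-principal eigenvalues $e^{\mathrm{P}(s,t)}e^{2\pi i l/p_j}$, $l\neq0$, stay away from $1$ near $(\alpha,0)$), so $s(t)$ is characterised near $0$ by $\mathrm{P}(s(t),t)\equiv0$. Since $\Psi_d$ is cohomologous to a strictly positive function on $\Sigma_{\mathcal{C}}$ (Lemma \ref{lem.holder}), $\partial_s\mathrm{P}(\alpha,0)=-\int\Psi_d\,d\mu<0$, where $\mu$ is the equilibrium state on $\Sigma_{\mathcal{C}}$ for $-\alpha\Psi_d-\xi(\Psi_{d_\ast}-\Psi_d)=-\Psi'$, a genuine probability Gibbs measure because $\Pr_{\mathcal{C}}(-\Psi')=0$ (as in the proof of Proposition \ref{prop.dom2}). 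Thus the implicit function theorem gives $s(t)$ analytic near $0$ with $s(0)=\alpha$, and the lemma will follow by differentiating $\mathrm{P}(s(t),t)\equiv0$.

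The parity statements come from a conjugation symmetry. Extending to a complex variable $z$, the map $F(s,z)=\Pr_{\mathcal{C}}(-s\Psi_d-\xi(\Psi_{d_\ast}-\Psi_d)-z(\Psi_{d_\ast}-\Psi_d))$ is jointly analytic near $(\alpha,0)$ and real-valued for real $(s,z)$, being a genuine topological pressure of a real Hölder potential; hence $\overline{F(s,z)}=F(\bar s,\bar z)$. Since $\mathrm{P}(s,t)=F(s,it)$ and $s(t)$ is the unique small zero of $s\mapsto F(s,it)$, conjugating $F(s(t),it)=0$ gives $F(\overline{s(t)},-it)=0$, so $\overline{s(t)}=s(-t)$ by uniqueness. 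Therefore $\mathrm{Re}\,s(-t)=\mathrm{Re}\,s(t)$, $\mathrm{Im}\,s(-t)=-\mathrm{Im}\,s(t)$ and $s(0)=\alpha\in\R$; in particular $(\mathrm{Re}\,s)'(0)=0$ and $(\mathrm{Im}\,s)''(0)=0$, which are (1) and (4).

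For the remaining two identities I differentiate $\mathrm{P}(s(t),t)\equiv0$. The key point is $\partial_t\mathrm{P}(\alpha,0)=-i\int(\Psi_{d_\ast}-\Psi_d)\,d\mu=0$: by construction $\xi$ solves $\theta_{d_\ast/d}'(\xi)=-1$, and differentiating $\Pr_{\mathcal{C}}(-\theta_{d_\ast/d}(a)\Psi_d-a\Psi_{d_\ast})\equiv0$ at $a=\xi$, together with $\theta_{d_\ast/d}(\xi)=\alpha-\xi$, shows that this is precisely the statement $\int\Psi_{d_\ast}\,d\mu=\int\Psi_d\,d\mu$. Hence $s'(0)=-\partial_t\mathrm{P}(\alpha,0)/\partial_s\mathrm{P}(\alpha,0)=0$, giving $(\mathrm{Re}\,s)'(0)=(\mathrm{Im}\,s)'(0)=0$, i.e.\ (2) (and (1) again). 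Differentiating once more and using $s'(0)=0$ yields $s''(0)=-\partial_t^2\mathrm{P}(\alpha,0)/\partial_s\mathrm{P}(\alpha,0)$. Along the imaginary direction the second derivative of the pressure is minus the asymptotic variance (standard thermodynamic formalism, \cite{ParryPollicott}), so $\partial_t^2\mathrm{P}(\alpha,0)=-\sigma^2$ with $\sigma^2\ge0$ the asymptotic variance of $\Psi_{d_\ast}-\Psi_d$ relative to $\mu$. Thus $s''(0)=-\sigma^2/\int\Psi_d\,d\mu$ is real, and it is strictly negative provided $\sigma^2>0$; this gives $(\mathrm{Re}\,s)''(0)<0$ (assertion (3)) and $(\mathrm{Im}\,s)''(0)=0$ (assertion (4) again).

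It remains to rule out $\sigma^2=0$, which is where the independence of $d,d_\ast$ and the structural results of Section \ref{section.codingstructure} enter. If $\sigma^2=0$ then $\Psi_{d_\ast}-\Psi_d$ is cohomologous on $\Sigma_{\mathcal{C}}$ to a constant, and since $\int(\Psi_{d_\ast}-\Psi_d)\,d\mu=0$ that constant is $0$; hence $S_n\Psi_{d_\ast}(x)=S_n\Psi_d(x)$ for every periodic $x\in\Sigma_{\mathcal{C}}$, i.e.\ $\ell_{d_\ast}[\gamma]=\ell_{d}[\gamma]$ for every conjugacy class $\gamma$ represented by a loop in $\mathcal{C}$. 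By Corollary \ref{cor.loops} every non-torsion class $[g]\in\conj'$ has $[g^{\pm M}]$ represented by such a loop, so $\ell_{d_\ast}[g]=\ell_{d}[g]$ for all $g\in\G$ (torsion classes having zero translation length for both metrics), and then $1\cdot\ell_d[g]+(-1)\cdot\ell_{d_\ast}[g]=0\in\Z$ for every $g$, contradicting the independence of $d$ and $d_\ast$. Hence $\sigma^2>0$ and the proof is complete. The only point needing care is the bookkeeping of the imaginary-direction derivatives of the pressure and the verification that the equilibrium measure $\mu$ governing $\partial_s\mathrm{P}$ and $\partial_t\mathrm{P}$ is exactly the one the parameter $\xi$ is tuned to make $\Psi_{d_\ast}-\Psi_d$ integrate to zero against; the rest is routine implicit differentiation and the reflection argument.
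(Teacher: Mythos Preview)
Your argument is correct and is essentially the proof the paper defers to \cite{sharp}: the conjugation symmetry you extract from the pressure, $\overline{F(s,z)}=F(\bar s,\bar z)$, is exactly the content of the paper's observation $\o(\overline{s},-t)=\overline{\o(s,t)}$, and the implicit differentiation of $\mathrm{P}(s(t),t)\equiv 0$ is the standard mechanism in Sharp's Proposition~3. One comment worth recording is that the paper's brief citation hides the step that genuinely uses the hypotheses of Theorem~\ref{thm.cor}, namely the strict inequality $\sigma^2>0$; you correctly supply this using independence of $d,d_\ast$ together with Corollary~\ref{cor.loops}, and you also correctly identify the equilibrium state $\mu$ governing the derivatives as the one for $-\Psi'=-\alpha\Psi_d-\xi(\Psi_{d_\ast}-\Psi_d)$ (so that the choice of $\xi$ via $\theta_{d_\ast/d}'(\xi)=-1$ makes $\int(\Psi_{d_\ast}-\Psi_d)\,d\mu=0$).
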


\begin{proof}
The proof is identical to the proof of Proposition 3 in \cite{sharp} after observing that $\o(\overline{s},-t) = \overline{\o(s,t)}$.
\end{proof}

\begin{proof} [proof of Theorem \ref{thm.cor}]
We use Proposition \ref{prop.dom2}, Lemma  \ref{lem.sym} and a variation of the method presented in the work of Sharp \cite{sharp}. We will sketch the parts of the proof that follow as in \cite{sharp} and explain more carefully where extra work is needed. Let $\o(s,t)$ be as above. By Proposition \ref{prop.dom2} we can take the logarithmic derivative of $\o$ with respect to $s$. Doing so we obtain the function
\[
\sum_{x\in\G} -\frac{d(o,x)}{|x|_S} e^{-sd(o,x) - (\xi + it)(d_\ast(o,x) - d(o,x))}
\]
that has the same domain of analyticity as $\o$ and also has the property that, near $(\alpha,0)$ this function looks like
\[
-\frac{1}{s-s(t)} 
\]
plus a function that is bi-analytic. This function is of precisely the same form as the functions studied by Sharp in Sections 4 and 5 of \cite{sharp}. Following the method presented in these sections, that is, using a result of Katsuda and Sunada \cite{katsuda.sunada}, we can translate the analytic properties for our function into a counting result. Lemma \ref{lem.sym} is needed so that one can apply a refinement of the Morse lemma (see Lemma $2$ of \cite{sharp}). After fixing $\epsilon >0$ and following this argument we arrive at the asymptotic
\begin{equation}\label{eq.9}
\sum_{d(o,x) <T} \frac{d(o,x)}{|x|_S}  \chi_{[0,\epsilon]}(|d_\ast(o,x) - d(o,x)|) \sim \frac{C e^{\alpha T}}{\sqrt{T}}
\end{equation}
as $T\to\infty$ where $C> 0$ is a constant and $\chi_{[0,\epsilon]}$ represents the indicator function on $[0, \epsilon]$. We have obtained a result similar to that in the statement of Theorem \ref{thm.cor} except for the extra weighting term $d(o,x)/|x|_S$ which appeared as an artifact of our use of the Cannon coding. Such a term does not appear in \cite{sharp} because of the differences in the quantities that Sharp counts.

To conclude the proof we need to remove this weighting term. To do so we use a large deviation theorem that follows from the work of Cantrell and Tanaka \cite{cantrell.tanaka.1}. Let $d_\alpha$ be the metric in $\Dc_\G$ given by $(\alpha-\xi)d + \xi d_\ast$. Then, the same proof as that used to show Theorem 4.23 in \cite{cantrell.tanaka.1} implies that there is $\Lambda_{\alpha,S} > 0$ such that for any fixed $\epsilon'>0$
\begin{equation}\label{eq.10}
\frac{1}{\#\{x : d_\alpha(o,x) < T\}} \#\left\{ x : d_\alpha(o,x) < T, \left| \frac{d_\alpha(o,x)}{|x|_S} - \Lambda_{\alpha,S} \right| > \epsilon' \right\} 
\end{equation}
decays to $0$ exponentially quickly as $T\to\infty$. We claim that this implies that for any $\epsilon'>0$ the cardinality of 
\[
\mathcal{U}(T,\epsilon'):= \left\{ x\in\G: d(o,x) < T, |d_\ast(o,x) - d(o,x)| < \epsilon, \left| \frac{d(o,x)}{|x|_S} - \frac{\Lambda_{\alpha,S}}{\alpha} \right| > \epsilon'  \right\}
\]
grows strictly exponentially slower that $e^{\alpha T}$. To see this, note that if $x \in \mathcal{U}(T,\epsilon')$ and $|x|_S$ is sufficiently large then
\[
d_\alpha(o,x) < \alpha T + \xi\epsilon \ \ \text{ and } \ \ \left| \frac{d_\alpha(o,x)}{|x|_S} - \Lambda_{\alpha,S}\right| > \frac{\alpha\epsilon'}{2}
\]
and so (\ref{eq.10}) implies the claim. Using this claim along with equation (\ref{eq.9}) and the fact that $\epsilon'>0$ can be taken to be arbitrarily small it is easy to deduce that
\[
\sum_{d(o,x) <T} \chi_{[0,\epsilon]}(|d_\ast(o,x) - d(o,x)|) \sim \frac{C\Lambda_{\alpha,S} \ e^{\alpha T}}{\alpha \sqrt{T}}
\]
as $T\to\infty$. This concludes the proof other than showing that $0<\alpha <1$ which is shown in Corollary \ref{cor.l1} below.
\end{proof}

\begin{corollary}\label{cor.l1}
Fix $d,d_\ast \in \Dc_\G$ that are independent and have exponential growth rate $1$. Let $\xi \in \R$ be the unique solution to $\theta'_{d_\ast/d}(\xi) = -1$. Then, $\xi + \theta_{d_\ast/d}(\xi) = \alpha$ where $\alpha$ is the constant obtained from Theorem \ref{thm.cor}. Furthermore, $0 < \alpha <1$. 
\end{corollary}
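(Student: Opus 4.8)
The plan is to reduce the whole statement to elementary features of the Manhattan curve $\theta_{d_\ast/d}$. First I would pin down $\alpha$. Recall from the proof of Proposition~\ref{prop.dom2} that $\alpha$ is the unique real number for which $(\alpha-\xi)d(o,\cdot)+\xi d_\ast(o,\cdot)$ has exponential growth rate $1$; by Lemma~\ref{lem.holder} and Proposition~\ref{prop.main} this is equivalently the unique real with $\Pr_{\Cc}\bigl(-(\alpha-\xi)\Psi_d-\xi\Psi_{d_\ast}\bigr)=0$ for a word maximal component $\Cc$. On the other hand, by Theorem~5.4 of \cite{cantrell.tanaka.2} together with Proposition~\ref{prop.main}, $b=\theta_{d_\ast/d}(\xi)$ is the unique real with $\Pr_{\Cc}\bigl(-b\Psi_d-\xi\Psi_{d_\ast}\bigr)=0$; uniqueness holds here because $b\mapsto \Pr_{\Cc}(-b\Psi_d-\xi\Psi_{d_\ast})$ has derivative $-\int\Psi_d\,d\mu_b<0$, where $\mu_b$ denotes the corresponding equilibrium state and $\Psi_d$ is cohomologous to a strictly positive function (Lemma~\ref{lem.holder}). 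Comparing the two characterisations immediately gives $\alpha-\xi=\theta_{d_\ast/d}(\xi)$, i.e.\ $\alpha=\xi+\theta_{d_\ast/d}(\xi)$.

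Next I would record the shape of $\theta_{d_\ast/d}$. Since $\theta_{d_\ast/d}(a)$ is the abscissa of convergence in $b$ of $\sum_{x\in\G}e^{-ad_\ast(o,x)-bd(o,x)}$, taking $a=0$ gives $\theta_{d_\ast/d}(0)=v_d=1$, and taking $a=1$ and using $v_{d_\ast}=1$ gives $\theta_{d_\ast/d}(1)=0$, so the curve passes through $(0,1)$ and $(1,0)$. It is convex, and since $d,d_\ast$ are independent they are not roughly similar, so by Theorem~1.2 of \cite{cantrell.tanaka.1} the curve is strictly convex and strictly decreasing; moreover the same result gives $\theta'_{d_\ast/d}(0)<-1<\theta'_{d_\ast/d}(1)$, whence $\xi\in(0,1)$. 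Strict convexity then places the curve strictly below the chord $b=1-a$ on $(0,1)$, so $\theta_{d_\ast/d}(\xi)<1-\xi$, while strict monotonicity gives $\theta_{d_\ast/d}(\xi)>\theta_{d_\ast/d}(1)=0$.

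Finally I would combine these facts with the identity from the first paragraph: $\alpha=\xi+\theta_{d_\ast/d}(\xi)>\xi>0$ and $\alpha=\xi+\theta_{d_\ast/d}(\xi)<\xi+(1-\xi)=1$, which is the claim. I do not expect a genuine obstacle here: the argument is essentially a definition-chase combined with elementary convexity once the identity $\alpha=\xi+\theta_{d_\ast/d}(\xi)$ is in place, and the only substantive input is the rigidity/strict-convexity statement (Theorem~1.2 of \cite{cantrell.tanaka.1}), which is precisely what rules out the degenerate equality $\alpha=1$. The one point requiring mild care is confirming that $\theta_{d_\ast/d}$ passes through $(1,0)$ exactly --- equivalently that $\sum_{x\in\G}e^{-d_\ast(o,x)}$ sits precisely on the boundary of convergence --- but this is immediate from $v_{d_\ast}=1$ together with the standard description of the Manhattan curve in \cite{cantrell.tanaka.1}.
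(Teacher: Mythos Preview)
Your argument is correct and follows essentially the same route as the paper: identify $\alpha=\xi+\theta_{d_\ast/d}(\xi)$ from the characterisation in the proof of Proposition~\ref{prop.dom2}, then use that the Manhattan curve passes through $(0,1)$ and $(1,0)$ together with strict convexity to bound $\theta_{d_\ast/d}(\xi)<1-\xi$, giving $\alpha<1$. The only differences are cosmetic --- you spell out $\alpha>0$ via $\theta_{d_\ast/d}(\xi)>\theta_{d_\ast/d}(1)=0$ where the paper simply says this is clear, and the paper cites \cite[Theorem~7.7]{cantrell.eduardo} for strict convexity rather than \cite[Theorem~1.2]{cantrell.tanaka.1}.
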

\begin{proof}
The fact that $\xi + \theta_{d_\ast/d}(\xi) = \alpha$ is implicit in the proof of Proposition \ref{prop.dom2}. We therefore just need to show that $0< \alpha < 1$. It is clear that $\alpha > 0$. To see that $\alpha < 1$ note that, since $d,d_\ast$ are not roughly similar the Manhattan curve $ \theta_{d_\ast/d}$ is strictly convex \cite[Theorem 7.7]{cantrell.eduardo}. In particular, for $t\in[0,1]$, $\theta_{d_\ast/d}(t) <  t -1$ (since $\theta_{d_\ast/d}$ goes through $(0,1)$ and $(1,0)$) and so $\alpha = \xi + \theta_{d_\ast/d}(\xi) < \xi + 1 - \xi = 1$ as required.
\end{proof}
This corollary extends the main result from \cite{SharpManhattan}. 

\begin{remark}
It is likely possible to obtain the following asymptotic which is similar to the expression written in Theorem \ref{thm.cor}: for each $\epsilon > 0$ there exist $C, \alpha > 0$ such that
\[
\#\{ x \in \G: d(o,x) \in (T, T+\epsilon) , \ d_\ast(o,x) \in (T, T+\epsilon)   \} \sim \frac{Ce^{\alpha T}}{\sqrt{T}}
\]
as $T\to\infty$. To deduce this result, one could try to follow the proof of Theorem 1 from \cite{lalley} opposed to the proof of Theorem 1 from \cite{sharp} as done above. We will not pursue this here however.
\end{remark}

We conclude with a proof of Proposition  \ref{prop.ls2}.
\begin{proof}[proof of Proposition \ref{prop.ls2}]
As noted earlier, one implication is trivial. We need to show that, under the hypotheses, if $d,d_\ast$ are not roughly similar then they are independent. Suppose that $d,d_\ast$ are not roughly similar but $
a\ell_d[x] + b\ell_{d_\ast}[x] \in \Z$   for some fixed $a, b \in \R$  and for all $x\in\G$. Fix a hyperbolic element $x\in\G$.
Since the cocycle $c(x,\xi) := a\beta_{o,d}(x,\xi) + b\beta_{o,d_\ast}(x,\xi) $ is H\"older continuous and $\partial \G$ is connected we can apply Lemma 3.3 from \cite{GMM2018} to find a geodesic connecting $x^+$ and $x^-$ which is invariant under $x^n$ for some $n$. Applying the Livsic Theorem argument in Proposition 3.2 from \cite{GMM2018} we deduce that $c(x^n,x^+) = c(x^n, x^-)$ which implies that $a\ell_d[x]+ b\ell_{d_\ast}[x] = 0$. Since this is true for all hyperbolic elements $x$ (and is also trivial true for all elliptic elements) we deduce that $d,d_\ast$ have proportional marked length spectrum and so by Theorem 1.2 of \cite{cantrell.tanaka.1} they are roughly similar, a contradiction.
\end{proof}

\subsection*{Data availability}
Data sharing is not applicable to this article as no new data was created for this work.

\subsection*{Conflicts of interest}
The author states that there is no conflict of interest.

\bibliographystyle{alpha}
\bibliography{coding}

\begin{thebibliography}{GMM18}

\bibitem[AL02]{arzlys}
G.~N. Arzhantseva and I.~G. Lysenok.
\newblock Growth tightness for word hyperbolic groups.
\newblock {\em Math. Z.}, 241(3):597--611, 2002.

\bibitem[BPS19]{BochiPotrieSambarino}
J.~Bochi, R.~Potrie, and A.~Sambarino.
\newblock Anosov representations and dominated splittings.
\newblock {\em J. Eur. Math. Soc. (JEMS)}, 21(11):3343--3414, 2019.

\bibitem[BS00]{BonkSchramm}
M.~Bonk and O.~Schramm.
\newblock Embeddings of {G}romov hyperbolic spaces.
\newblock {\em Geom. Funct. Anal.}, 10(2):266--306, 2000.

\bibitem[Bur93]{BurgerManhattan}
M.~Burger.
\newblock Intersection, the {M}anhattan curve, and {P}atterson-{S}ullivan
  theory in rank {$2$}.
\newblock {\em Internat. Math. Res. Notices}, (7):217--225, 1993.

\bibitem[Cal13]{Calegari}
D.~Calegari.
\newblock The ergodic theory of hyperbolic groups.
\newblock In {\em Geometry and Topology Down Under}, volume 597 of {\em
  Contemp. Math.}, pages 15--52. Amer. Math. Soc., Providence, RI, 2013.

\bibitem[Can84]{Cannon}
J.~Cannon.
\newblock The combinatorial structure of cocompact discrete hyperbolic groups.
\newblock {\em Geom. Dedicata}, 16(2):123--148, 1984.

\bibitem[Can21]{stats}
S.~Cantrell.
\newblock Statistical limit laws for hyperbolic groups.
\newblock {\em Trans. Amer. Math. Soc.}, 374(4):2687--2732, 2021.

\bibitem[Car20]{Carvajales}
L.~Carvajales.
\newblock Counting problems for special orthogonal representations.
\newblock {\em Ann. Inst. Four.}, 70:1199--1257, 2020.

\bibitem[CF10]{CalegariFujiwara2010}
D.~Calegari and K.~Fujiwara.
\newblock Combable functions, quasimorphisms, and the central limit theorem.
\newblock {\em Ergodic Theory Dynam. Systems}, 30(5):1343--1369, 2010.

\bibitem[COR22]{cantrell.eduardo}
S.~Cantrell and E.~Oreg\'{o}n-Reyes.
\newblock Manhattan geodesics and the boundary of the space of metric
  structures on hyperbolic groups.
\newblock {\em preprint: arXiv:2210.07136}, 2022.

\bibitem[CS21]{cantrell.sharp}
S.~Cantrell and R.~Sharp.
\newblock A central limit theorem for periodic orbits of hyperbolic flows.
\newblock {\em Dynam. Systems.}, 36:142--153, 2021.

\bibitem[CS22]{cantrell.sert}
S.~Cantrell and C.~Sert.
\newblock Counting and boundary limit theorems for representations of
  gromov-hyperbolic groups.
\newblock {\em preprint: arXiv:2201.00737}, 2022.

\bibitem[CT21]{cantrell.tanaka.1}
S.~Cantrell and R.~Tanaka.
\newblock The manhattan curve, ergodic theory of topological flows and
  rigidity.
\newblock {\em preprint: arXiv:2104.13451}, 2021.

\bibitem[CT22]{cantrell.tanaka.2}
S.~Cantrell and R~.Tanaka.
\newblock Invariant measures of the topological flow and measures at infinity
  for hyperbolic groups.
\newblock {\em preprint}, 2022.

\bibitem[Dal99]{Dalbo}
F.~Dal'bo.
\newblock Remarques sur le spectre des longueurs d'une surface et comptages.
\newblock {\em Bol. Soc. Brasil. Mat. (N.S.)}, 30(2):199--221, 1999.

\bibitem[Dol98]{dolgopyat}
D.~Dolgopyat.
\newblock Prevalence of rapid mixing in hyperbolic flows.
\newblock {\em Ergodic Theory Dynam. Systems.}, 18:1097--1114, 1998.

\bibitem[GdlH90]{GhysdelaHarpe}
\'{E}. Ghys and P.~de~la Harpe, editors.
\newblock {\em Sur les groupes hyperboliques d'apr\`es {M}ikhael {G}romov},
  volume~83 of {\em Progress in Mathematics}.
\newblock Birkh\"{a}user Boston, Inc., Boston, MA, 1990.
\newblock Papers from the Swiss Seminar on Hyperbolic Groups held in Bern,
  1988.

\bibitem[GL13]{gouezel-lalley}
S.~Gou\"{e}zel and S.~Lalley.
\newblock Random walks on co-compact fuchsian groups.
\newblock {\em Annales Scientifiques de l'ENS}, 46:129--173, 2013.

\bibitem[GMM18]{GMM2018}
S.~Gou\"{e}zel, F.~Math\'{e}us, and F.~Maucourant.
\newblock Entropy and drift in word hyperbolic groups.
\newblock {\em Invent. Math.}, 211(3):1201--1255, 2018.

\bibitem[Gou14]{GouezelLocalLimit}
S.~Gou\"{e}zel.
\newblock Local limit theorem for symmetric random walks in {G}romov-hyperbolic
  groups.
\newblock {\em J. Amer. Math. Soc.}, 27(3):893--928, 2014.

\bibitem[Gou15]{GouezelAncona}
S.~Gou\"{e}zel.
\newblock Martin boundary of random walks with unbounded jumps in hyperbolic
  groups.
\newblock {\em Ann. Probab.}, 43(5):2374--2404, 2015.

\bibitem[GTT18]{GTT2}
I.~Gekhtman, S.~Taylor, and G.~Tiozzo.
\newblock Counting loxodromics for hyperbolic actions.
\newblock {\em J. Topol.}, 11(2):379--419, 2018.

\bibitem[Kat80]{kato}
T.~Kato.
\newblock {\em Perturbation Theory for Linear Operators}.
\newblock Springer-Verlag, Berlin, 1980.

\bibitem[Kra01]{Hartogs}
S.~Krantz.
\newblock {\em Function theory of several complex variables}.
\newblock AMS Chelsea Publishing, reprint of the 1992 edition edition, 2001.

\bibitem[KS90]{katsuda.sunada}
A.~Katsuda and T.~Sunada.
\newblock Closed orbits in homology classes.
\newblock {\em Publ. Math. IHES}, 71:5--72, 1990.

\bibitem[Lal89]{lalley}
S.~Lalley.
\newblock Renewal theorems in symbolic dynamics, with applications to geodesic
  flows, non-euclidean tessellations and their fractal limits.
\newblock {\em Acta Math.}, 163(1-2):1--55, 1989.

\bibitem[Min05]{MineyevFlow}
I.~Mineyev.
\newblock Flows and joins of metric spaces.
\newblock {\em Geom. Topol.}, 9:403--482, 2005.

\bibitem[N{\v S}16]{NicaSpakula}
B.~Nica and J.~{\v S}pakula.
\newblock Strong hyperbolicity.
\newblock {\em Groups Geom. Dyn.}, 10(3):951--964, 2016.

\bibitem[OR22]{oregon-reyes}
E.~Oreg\'{o}n-Reyes.
\newblock The space of metric structures on hyperbolic groups.
\newblock {\em To appear in Journal of the LMS}, 2022.
\newblock arXiv:2204.12545.

\bibitem[Pol84]{pollicott}
M.~Pollicott.
\newblock A complex {R}uelle--{P}erron--{F}robenius theorem and two
  counterexamples.
\newblock {\em Ergodic Theory Dynam. Systems.}, 4(1), 135-146 1984.

\bibitem[PP90]{ParryPollicott}
W.~Parry and M.~Pollicott.
\newblock Zeta functions and the periodic orbit structure of hyperbolic
  dynamics.
\newblock {\em Ast\'{e}risque}, (187-188):268, 1990.

\bibitem[PS98]{pollicott.sharp}
M.~Pollicott and R.~Sharp.
\newblock Comparison theorems and orbtit counting in hyperbolic geometry.
\newblock {\em Trans. Amer. Math. Soc.}, 350:473--499, 1998.

\bibitem[PS01]{pollicott.sharp.2}
M.~Pollicott and R.~Sharp.
\newblock Error terms for closed orbits of hyperbolic flows.
\newblock {\em Ergodic Theory Dynam. Systems.}, 21:545--562, 2001.

\bibitem[Rob02]{Roblin}
T.~Roblin.
\newblock {\em Ergodicit\'e et \'equidistribution en courbure n\'egative},
  volume~95 of {\em M\'em. Soc. Math. Fr.}
\newblock SNF, 2002.

\bibitem[Sam14]{sambarino}
A.~Sambarino.
\newblock Quantitative properties of convex representations.
\newblock {\em Comment. Math. Helv.}, 89(2):443--488, 2014.

\bibitem[Sha92]{sharp}
R.~Sharp.
\newblock Prime orbit theprems with multi-dimensional contraits for {A}xiom {A}
  flows.
\newblock {\em Monatshefter f\"ur Mathematik}, 114:261--304, 1992.

\bibitem[Sha98]{SharpManhattan}
R.~Sharp.
\newblock The {M}anhattan curve and the correlation of length spectra on
  hyperbolic surfaces.
\newblock {\em Math. Z.}, 228(4):745--750, 1998.

\bibitem[SS93]{schwarz.sharp}
R.~Schwartz and R.~Sharp.
\newblock The correlation of length spectra of two hyperbolic surfaces.
\newblock {\em Communications in Mathematical Physics}, (154):423--430, 1993.

\end{thebibliography}

\end{document}